\numberwithin{equation}{section}
\newtheorem{thm}{Theorem}[section]
\newtheorem{lem}[thm]{Lemma}
\newtheorem{prop}[thm]{Proposition}
\newtheorem{cor}[thm]{Corollary}
\theoremstyle{definition}
\newtheorem{defn}[thm]{Definition}
\newtheorem{problem}[thm]{Problem}
\newtheorem{remark}[thm]{Remark}
\newtheorem*{remark*}{Remark}
\newcommand\Par{\operatorname{Par}}
\newcommand\GPar{\operatorname{GPar}}
\newcommand\MMSVT{\operatorname{MMSVT}}
\newcommand\SSYT{\operatorname{SSYT}}
\newcommand\RSSYT{\operatorname{RSSYT}}
\newcommand\qand{\quad\mbox{and}\quad}
\newcommand\NN{\mathbb{N}}
\newcommand\QQ{\mathbb{Q}}
\newcommand\vx{\bm{x}}
\newcommand\vp{\bm{p}}
\newcommand\va{\bm{\alpha}}
\newcommand\vb{\bm{\beta}}
\newcommand\vy{\bm{y}}
\newcommand\tg{\tilde{g}}
\newcommand\ZZ{\mathbb{Z}}
\newcommand\wt{\operatorname{wt}}
\newcommand\IET{{\operatorname{IET}}}
\newcommand\ET{{\operatorname{ET}}}
\newcommand\MRPP{{\operatorname{MRPP}}}
\newcommand\lm{{\lambda/\mu}}
\newcommand\ml{{\mu/\lambda}}
\newcommand\lmc{{\lambda'/\mu'}}
\newcommand\U{--++(0,1)}
\newcommand\LStep[1]{node[above left]{\(#1\)}--++(-1,0)}
\title{Refined canonical stable Grothendieck polynomials and their duals, Part 1}
\author{Byung-Hak Hwang}
\address{Department of Mathematics, Sungkyunkwan University, Suwon,
  South Korea}
\email{byunghakhwang@gmail.com}
\author{Jihyeug Jang}
\address{Department of Mathematics, Sungkyunkwan University, Suwon,
  South Korea}
\email{4242ab@gmail.com}
\author{Jang Soo Kim}
\address{Department of Mathematics, Sungkyunkwan University, Suwon,
  South Korea}
\email{jangsookim@skku.edu}
\author{Minho Song}
\address{Department of Mathematics, Sungkyunkwan University, Suwon,
  South Korea}
\email{smh3227@skku.edu}
\author{U-Keun Song}
\address{Department of Mathematics, Sungkyunkwan University, Suwon,
  South Korea}
\email{sukeun319@gmail.com}
\date{\today}
\begin{document}

\begin{abstract}
  In this paper we introduce refined canonical stable Grothendieck polynomials
  and their duals with two infinite sequences of parameters. These
  polynomials unify several generalizations of Grothendieck polynomials
  including canonical stable Grothendieck polynomials due to Yeliussizov,
  refined Grothendieck polynomials due to Chan and Pflueger, and refined dual
  Grothendieck polynomials due to Galashin, Liu, and Grinberg. We give
  Jacobi--Trudi-like formulas, combinatorial models, Schur expansions, Schur
  positivity, and dualities of these polynomials.
\end{abstract}

\maketitle

\section{Introduction}
\label{sec:introduction}

\subsection{History of Grothendieck polynomials}
\label{sec:hist-groth-polyn}

Grothendieck polynomials were introduced
by Lascoux and Sch\"{u}tzenberger \cite{LS1982}
for studying the Grothendieck ring of vector bundles on a flag variety.
Therefore, these polynomials can be regarded as
\( K \)-theoretic analogues of Schubert polynomials.
As in the Schubert polynomial case, they are indexed by permutations,
and the stable limit of Grothendieck polynomials are symmetric functions.
In \cite{FK1996}, Fomin and Kirillov introduced \( \beta
\)-Grothendieck polynomials with a parameter \( \beta \),
which reduce to Schubert polynomials and Grothendieck polynomials when \( \beta=0 \)
and \( \beta=-1 \), respectively.
In other words, they introduced a grading on Grothendieck polynomials
such that the degree 0 parts of the graded polynomials coincide
with Schubert polynomials.
They also studied the stable Grothendieck polynomials with
the parameter \( \beta \).

When focusing on the Grothendieck ring of vector bundles on Grassmannians rather
than that of flag varieties, the stable Grothendieck polynomials are indexed by
partitions. These stable Grothendieck polynomials \( G_\lambda(\vx) \) are also called symmetric
Grothendieck polynomials in the literature and they form a basis for the
(connective) \( K \)-theory ring of Grassmannians, see
\cite[Section~2.3]{Monical2020} and references therein. In this context,
Lenart~\cite{Lenart2000} gave the Schur expansion of \( G_\lambda(\vx) \) and
showed that \( \{G_\lambda(\vx) : \mbox{\( \lambda \) is a partition}\} \) is a
basis for (a completion of) the space of symmetric functions. Lenart's result
gives a combinatorial model for \( G_\lambda(\vx) \) as pairs of tableaux.
Buch \cite{Buch2002} showed that 
the stable Grothendieck polynomial \( G_\lambda(\vx) \) is equal to a generating
function for semistandard set-valued tableaux of shape \( \lambda \),
which are reminiscent of semistandard Young tableaux, the combinatorial model for Schur functions.

After Buch's work~\cite{Buch2002}, many generalizations of \( G_\lambda(\vx) \)
have been studied from various viewpoints.
In \cite{LP2007}, Lam and Pylyavskyy introduced
the dual stable Grothendieck polynomials \( g_\lambda(\vx) \),
and found a combinatorial interpretation for them
in terms of reverse plane partitions.
The stable Grothendieck polynomials \( G_\lambda(\vx) \)
and the dual stable Grothendieck polynomials \( g_\lambda(\vx) \)
are dual with respect to the Hall inner product.
Similar to the Grothendieck polynomial case, the dual Grothendieck polynomials
form a basis for the space of symmetric functions.

The Schur functions \( s_\lambda(\vx) \) satisfy \(
\omega(s_\lambda(\vx))=s_{\lambda'}(\vx) \), where \( \omega \) is the standard
involution on symmetric functions and \( \lambda' \) is the transpose of \( \lambda \).
In search of a deformation of Grothendieck polynomials with this property,
Yeliussizov \cite{Yeliussizov2017} introduced
the canonical stable Grothendieck polynomials
\( G^{(\alpha,\beta)}_\lambda(\vx_n) \) with two parameters \( \alpha \) and \(
\beta \) defined by  
\begin{equation}\label{eq:yeli}
  G^{(\alpha,\beta)}_\lambda(\vx_n) 
  = \frac{\det
    \left(x_j^{\lambda_i+n-i}(1+\beta x_j)^{i-1}(1-\alpha x_j)^{-\lambda_i}\right)_{1\le i,j\le n}}
    {\prod_{1\le i<j\le n}(x_i-x_j)},
\end{equation}
where \( \vx_n = (x_1,\dots,x_n) \) is a sequence of \( n \) indeterminants,
and defined \( G^{(\alpha,\beta)}_\lambda(\vx) \)
to be the limit of \( G^{(\alpha,\beta)}_\lambda(\vx_n) \) as \( n\to\infty \).
When \( \alpha=0 \), the canonical stable Grothendieck polynomial \( G^{(0,\beta)}
_\lambda(\vx) \) reduces to the stable \( \beta \)-Grothendieck polynomial
indexed by \( \lambda \).
He also defined the dual canonical stable Grothendieck polynomials
\( g^{(\alpha,\beta)}_\lambda(\vx) \) by the relation
\[
  \left\langle G^{(-\alpha,-\beta)}_\mu(\vx), g^{(\alpha,\beta)}_\lambda(\vx) \right\rangle
  = \delta_{\lambda,\mu},
\]
where \( \langle -,- \rangle \) is the Hall inner product.
Yeliussizov \cite{Yeliussizov2017} showed that these polynomials
behave nicely as Schur functions under the involution \( \omega \):
\[
  \omega (G^{(\alpha,\beta)}_\lambda(\vx)) = G^{(\beta,\alpha)}_{\lambda'}(\vx)
  \qand
  \omega (g^{(\alpha,\beta)}_\lambda(\vx)) = g^{(\beta,\alpha)}_{\lambda'}(\vx).
\]
Moreover, he found combinatorial interpretations for
\( G^{(\alpha,\beta)}_\lambda(\vx) \) and \( g^{(\alpha,\beta)}_\lambda(\vx) \)
using hook-valued tableaux and rim border tableaux, respectively.

There are also generalizations of Grothendieck polynomials and their duals with
infinite parameters.
Motivated by a geometric result in \cite{Chan2021},
Chan and Pflueger \cite{CP21:grothendieck} introduced
the refined stable Grothendieck polynomials \( RG_\lambda(\vx;\vb) \)
with parameters \( \vb=(\beta_1,\beta_2,\dots) \).
One can think of these new parameters \( \vb \) as a refinement
of the parameter \( \beta \) introduced by Fomin and Kirillov.
By introducing a statistic on reverse plane partitions,
Galashin, Grinberg, and Liu \cite{GGL2016} independently defined the refined
dual stable Grothendieck polynomials \( \tg_\lambda(\vx;\vb) \).
These polynomials \( \tg_\lambda(\vx;\vb) \) are also symmetric in \( \vx \),
and generalize both Schur functions and the dual stable Grothendieck polynomials.
Although a geometric meaning of these refined parameters has been unknown,
the refined dual Grothendieck polynomials have a natural interpretation
from probability theory; see \cite{Motegi2021}.
Grinberg showed that
\( RG_\lambda(\vx;\vb) \) and \( \tg_\lambda(\vx; \vb) \) are dual with respect
to the Hall inner product; see \cite[Remark~3.9]{CP21:grothendieck}.
This fact is rather surprising since these two polynomials are introduced
in quite different contexts.

Note that \( RG_\lambda(\vx;\vb) \) and \( \tg_\lambda(\vx;\vb) \) are
multi-parameter refinements of \( G^{(0,\beta)}_\lambda(\vx) \) and \(
g^{(0,\beta)}_\lambda(\vx) \), respectively, in the sense that \(
RG_\lambda(\vx;(-\beta,-\beta,\dots))=G^{(0,\beta)}_\lambda(\vx) \) and \(
\tg_\lambda(\vx;(\beta,\beta,\dots))=g^{(0,\beta)}_\lambda(\vx) \). Hence
a natural question is to find multi-parameter refinements of \(
G^{(\alpha,\beta)}_\lambda(\vx) \) and \( g^{(\alpha,\beta)}_\lambda(\vx) \) by
introducing two infinite sequences \( \va=(\alpha_1,\alpha_2,\dots) \) and
\( \vb=(\beta_1,\beta_2,\dots) \) of parameters.
This is the main objective of this paper.

Since Grothendieck polynomials and their duals are analogues of Schur functions,
one may ask what properties of Schur functions have analogues
for Grothendieck polynomials.
For instance, the Jacobi--Trudi formula for Schur functions is well known:
\begin{equation}\label{eq:JT}
  s_\lambda(\vx)
      = \det \left( h_{\lambda_i-i+j}(\vx) \right)_{i,j=1}^{\ell(\lambda)},
\end{equation}
where \( \ell(\lambda) \) denotes the number of parts of a partition
\( \lambda \) and \( h_k(\vx) \) is the complete homogeneous symmetric function.
(For explicit definitions, see Section~\ref{sec:symmetric_function}.)
Various Jacobi--Trudi-like formulas for generalizations of Grothendieck polynomials
have been studied in the literature;
see, e.g., \cite{Amanov2022,Kim2021a,Kim_JT22,Kirillov_2016, Matsumura_2017,
  Matsumura_2018,Motegi2021,ShimozonoZabrocki,Yeliussizov2017}.

Besides the Jacobi--Trudi-like formula, Grothendieck polynomials have been studied
from various points of view.
In particular, it is known that Grothendieck polynomials have
integrable vertex models~\cite{Brubaker2023, Buciumas2020, Gunna20, MS13,Motegi2021,WZ19},
crystal structures~\cite{Galashin2017,HS20,Pan2022},
and probabilistic models~\cite{Motegi2021, yeliussizov20:_dual_groth}.

In this paper, motivated by these previous works, we introduce refined canonical
stable Grothendieck polynomials \( G_\lambda(\vx_n;\va,\vb) \) and their duals
\( g_\lambda(\vx_n;\va,\vb) \) with infinite parameters \(
\va=(\alpha_1,\alpha_2,\dots) \) and \( \vb=(\beta_1,\beta_2,\dots) \), and
furthermore their skew versions. We give
Jacobi--Trudi-like formulas, combinatorial models, Schur expansions, 
Schur positivity, and dualities of these polynomials. Our generalization unifies all 
generalizations of Grothendieck polynomials mentioned above, and the
Jacobi--Trudi-like formulas in this paper generalize those in the literature.

\subsection{Main results}
\label{sec:main-results}

Let \( \vx = (x_1,x_2,\dots) \), \( \vx_n = (x_1,\dots,x_n) \) be sequences of variables,
and \( \va = (\alpha_1,\alpha_2,\dots) \), \( \vb = (\beta_1,\beta_2,\dots) \)
sequences of parameters.

We generalize the canonical stable Grothendieck polynomials in \eqref{eq:yeli}
as follows.

\begin{defn}\label{def:bi_alt_G}
  For a partition \( \lambda \) with at most \( n \) parts, 
  the \emph{refined canonical stable Grothendieck polynomial}
  \( G_\lambda(\vx_n;\va,\vb) \) is defined by
  \begin{equation}\label{eq:defG}
    G_\lambda(\vx_n;\va,\vb) 
    = \frac{\det
      \left(x_j^{\lambda_i+n-i}
      \dfrac{(1-\beta_1 x_j)\cdots (1-\beta_{i-1} x_j)}
            {(1-\alpha_1 x_j)\cdots (1-\alpha_{\lambda_i} x_j)}
      \right)_{1\le i,j\le n}}
      {\prod_{1\le i<j\le n}(x_i-x_j)}.
  \end{equation}
\end{defn}

Note that the numerator on the right hand side of \eqref{eq:defG} is a rational
function which vanishes if \( x_i=x_j \) for any \( 1\le i<j\le n \). Hence the
right hand side of \eqref{eq:defG} can be written as a rational function whose
denominator is a product of factors of the form \( 1-\alpha_ix_j \), which in
turn can be expanded as geometric series. This shows that \(
G_\lambda(\vx_n;\va,\vb) \) in \eqref{eq:defG} is a well-defined formal power series.

Note that \( G_\lambda(\vx_n;\va,\vb) \) is a formal power series in \( \vx_n \)
rather than a polynomial in \( \vx_n \). By abuse of terminology, we will call this
a polynomial. In the literature, Grothendieck polynomials are also
called Grothendieck functions.

Most formulas in this paper are expressed using plethystic substitution, see
Section~\ref{sec:plethystic} for more explanations on this notion. The following
identity is well known in the theory of symmetric functions: for any formal
power series \( Y \) and \( Z \),
\[
  h_n[Y+Z] = \sum_{a+b=n} h_a[Y] h_b[Z].
\]
Imitating this identity, we define
\[
  h_n[Y\ominus Z] = \sum_{a-b=n} h_a[Y] h_b[Z].
\]
Using the notation \( \ominus \) we can rewrite \eqref{eq:defG} as follows
(see Section~\ref{sec:plethystic} for a proof):
\begin{equation}\label{eq:defG'}
  G_\lambda(\vx_n;\va,\vb)
  = \frac{\det
    \left( h_{\lambda_i+n-i}[x_j \ominus(A_{\lambda_i}-B_{i-1})]
    \right)_{1\le i,j\le n}}
  {\prod_{1\le i<j\le n}(x_i-x_j)},
\end{equation}
where \( A_{k}=\alpha_1+\dots+\alpha_{k} \) and
\( B_{k} = \beta_1+\dots+\beta_{k} \) for \( k\ge1 \),
and \( A_{k}=B_{k}=0 \) for \( k \le 0 \).

In \cite{GGL2016, LP2007, Yeliussizov2017}, the dual Grothendieck polynomials and
their generalizations are defined using reverse plane partitions or by the
duality with Grothendieck polynomials.
We define our generalization of dual Grothendieck polynomials via
the following bialternant formula.

\begin{defn} \label{def:bi_alt_g}
  For a partition \( \lambda \) with at most \( n \) parts, 
  the \emph{refined dual canonical stable Grothendieck polynomial}
  \( g_\lambda(\vx_n;\va,\vb) \) is defined by 
  \begin{equation}\label{eq:defg}
    g_\lambda(\vx_n;\va,\vb)
    = \frac{\det(h_{\lambda_i+n-i}[x_j-A_{\lambda_i-1}+B_{i-1}])_{1\le i,j\le n}}
      {\prod_{1\le i<j\le n}(x_i-x_j)}.
  \end{equation}
\end{defn}

Amanov and Yeliussizov~\cite{Amanov2022} found a bialternant formula for
the dual Grothendieck polynomials,
which is a special case of \eqref{eq:defg}.
Since we define \( G_\lambda(\vx_n;\va,\vb) \) and
\( g_\lambda(\vx_n;\va,\vb) \) using the bialternant formulas,
they are automatically symmetric polynomials.

According to Theorem~\ref{thm:Schur_exp G,g for Par_n},
specializing \( G_\lambda(\vx_{n+1};\va,\vb) \) at \( x_{n+1}=0 \) gives
\( G_\lambda(\vx_n;\va,\vb) \).
Thus we define \( G_\lambda(\vx;\va,\vb) \) to be the unique formal power series
such that \( G_\lambda(x_1,\dots,x_n,0,0,\dots;\va,\vb)=G_\lambda(\vx_n;\va,\vb) \)
for any \( n\ge \ell(\lambda) \).
We also define \( g_\lambda(\vx;\va,\vb) \) similarly.

Using the Cauchy--Binet theorem we obtain Schur expansions for \(
G_\lambda(\vx_n;\va,\vb) \) and \( g_\lambda(\vx_n;\va,\vb) \) 
in Theorem~\ref{thm:Schur_exp G,g for Par_n} and prove the
following Jacobi--Trudi-like formulas.

\begin{thm}\label{thm:JT_ab_intro}
  For a partition \( \lambda \) with at most \( n \) parts, we have
  \begin{align*}
    G_\lambda(\vx_n;\va,\vb)
    &= \det\left(  h_{\lambda_i-i+j}[X_n\ominus (A_{\lambda_i}-B_{i-1})]\right)_{i,j=1}^n, \mbox{ and} \\ 
    g_\lambda(\vx_n;\va,\vb)
    &= \det\left(  h_{\lambda_i-i+j}[X_n-A_{\lambda_i-1}+B_{i-1}]\right)_{i,j=1}^n,
  \end{align*}
  where \( X_n=x_1+\dots+x_n \).
\end{thm}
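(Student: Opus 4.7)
My plan is to derive both Jacobi--Trudi identities from Definitions~\ref{def:bi_alt_G} and \ref{def:bi_alt_g} by factoring each bialternant matrix as a product of a Jacobi--Trudi matrix and a Vandermonde-like matrix whose determinant is $\prod_{1\le i<j\le n}(x_i-x_j)$. The two cases (with $\ominus$ and with the ordinary plethystic $-$) are completely parallel, because the key identity I use does not depend on the precise form of the plethystic operation applied to the $\va,\vb$ variables.

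First, I would establish the following identity: for any formal expression $Y$ in $\va$ and $\vb$ and any integer $m$,
\begin{equation*}
  h_m[x_k \ominus Y] = \sum_{j=1}^n (-1)^{n-j}\, e_{n-j}(\vx_n \setminus \{x_k\}) \, h_{m-n+j}[X_n \ominus Y],
\end{equation*}
together with the identical formula in which $\ominus$ is replaced by the ordinary plethystic $-$. The proof is a generating-function calculation: the elementary identity
\begin{equation*}
  \frac{1}{1-tx_k} = \Bigl(\prod_{l=1}^n \frac{1}{1-tx_l}\Bigr)\prod_{l\ne k}(1-tx_l)
\end{equation*}
becomes, after multiplying by the $Y$-dependent factor $\sum_b h_b[Y]\,t^{-b}$ (for $\ominus$) or by $1/\sum_b h_b[Y]\,t^b$ (for $-$), an equality between the generating series for $h_m[x_k \ominus Y]$ (respectively $h_m[x_k - Y]$) and the corresponding series with $X_n$ in place of $x_k$ multiplied by $\sum_{p\ge 0}(-1)^p e_p(\vx_n\setminus\{x_k\})\,t^p$. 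Extracting the coefficient of $t^m$ and substituting $p = n-j$ gives the identity.

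Second, I would introduce the $n\times n$ matrices
\begin{equation*}
  A = \bigl(h_{\lambda_i-i+j}[X_n \ominus (A_{\lambda_i}-B_{i-1})]\bigr)_{i,j}, \quad
  M = \bigl(h_{\lambda_i+n-i}[x_k \ominus (A_{\lambda_i}-B_{i-1})]\bigr)_{i,k},
\end{equation*}
and $B = \bigl((-1)^{n-j} e_{n-j}(\vx_n \setminus \{x_k\})\bigr)_{j,k}$. Setting $m = \lambda_i + n - i$ in the key identity (so that $m-n+j = \lambda_i - i + j$) gives $M = AB$, hence $\det M = \det A \cdot \det B$. The same matrix $B$ works for the $g$-case by the same argument.

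Third, I would compute $\det B = \prod_{1\le i<j\le n}(x_i-x_j)$. The cleanest route is to note that the expansion $\prod_{l\ne k}(1-tx_l) = \sum_j B_{jk}\, t^{n-j}$, after the substitution $t = 1/s$, gives
\begin{equation*}
  \sum_{j=1}^n B_{jk}\, s^j = s\prod_{l\ne k}(s-x_l),
\end{equation*}
which vanishes at $s = x_m$ for $m\ne k$ and equals $x_k\prod_{l\ne k}(x_k-x_l)$ at $s = x_k$. Thus $CB$ with $C_{mj} = x_m^j$ is diagonal with diagonal entries $x_k\prod_{l\ne k}(x_k-x_l)$, and comparing $\det C = (\prod_k x_k)(-1)^{\binom{n}{2}}\prod_{i<j}(x_i-x_j)$ with $\det(CB) = (\prod_k x_k)(-1)^{\binom{n}{2}}\prod_{i<j}(x_i-x_j)^2$ yields $\det B = \prod_{i<j}(x_i-x_j)$. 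Dividing $\det M = \det A\cdot \det B$ by this Vandermonde then recovers the bialternant formula for $G_\lambda(\vx_n;\va,\vb)$, proving $\det A = G_\lambda(\vx_n;\va,\vb)$; the identical argument proves the second formula.

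I anticipate the main obstacle will be notational rather than conceptual: because the generating series arising from $\ominus$ involve negative powers of $t$, one must interpret them as formal Laurent series (the coefficient of $t^m$ in $\sum_a h_a[X_n]\,t^a \cdot \sum_b h_b[Y]\,t^{-b}$ being the well-defined sum $\sum_{a-b=m} h_a[X_n]\, h_b[Y]$), and care is needed to keep the signs in $B_{jk}$ and in the Vandermonde consistent.
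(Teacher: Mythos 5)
Your proof is correct, and it takes a genuinely different route from the paper. The paper never factors the bialternant matrix directly: it first writes the $(i,j)$-entry of the bialternant numerator as an entry of a product $PQ$ with $P=(x_i^j)$ an $n\times\NN$ matrix, applies the Cauchy--Binet theorem (Lemma~\ref{lem:C-B}) to obtain the Schur expansion $G_\lambda(\vx_n;\va,\vb)=\sum_{\mu}C_{\lambda,\mu}(\va,\vb)s_\mu(\vx_n)$ of Theorem~\ref{thm:Schur_exp G,g for Par_n}, and then applies Cauchy--Binet a second time to recombine $\sum_\mu \det P^\mu\det Q_\mu$ into the single Jacobi--Trudi determinant. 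Your argument is instead the classical transition-matrix proof of the equivalence of the bialternant and Jacobi--Trudi formulas (as in Macdonald, Ch.~I, (3.4)): you exhibit the bialternant matrix as $AB$ with $A$ the Jacobi--Trudi matrix and $B_{j,k}=(-1)^{n-j}e_{n-j}(\vx_n\setminus\{x_k\})$ of determinant $\prod_{i<j}(x_i-x_j)$, the key point being that your expansion of $h_m[x_k\ominus Y]$ (resp.\ $h_m[x_k-Y]$) over $h_{m-n+j}[X_n\ominus Y]$ depends on $Y$ only through the row index, so the factorization is uniform in $k$. Each step checks out, including the Laurent-series subtlety you flag (one only ever multiplies the doubly infinite series by the polynomial $\prod_{l\ne k}(1-tx_l)$) and the computation $\det B=\prod_{i<j}(x_i-x_j)$ via the diagonal matrix $CB$. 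The trade-off: your route is shorter and self-contained for the Jacobi--Trudi identities alone, whereas the paper's detour through Cauchy--Binet produces the Schur expansion and the coefficients $C_{\lambda,\mu}(\va,\vb)$, $c_{\lambda,\mu}(\va,\vb)$ as a byproduct, and these are reused heavily later (for the duality of Theorem~\ref{thm:vec dual} and the combinatorial models of Section~\ref{sec:lattice}), so under your approach those expansions would still have to be derived separately.
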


Using the Schur expansions of \( G_\lambda(\vx;\va,\vb) \) and \( g_\lambda(\vx;
\va,\vb) \), we show the following duality, which justifies the name ``refined
dual canonical stable Grothendieck polynomial'' for \( g_\lambda(\vx;\va,\vb) \).

\begin{thm}\label{thm:dual_intro}
  For partitions \( \lambda \) and \( \mu \),
  \[
    \left\langle G_\lambda(\vx;\va,\vb), g_\mu(\vx;\va,\vb)  \right\rangle
    = \delta_{\lambda,\mu}.
  \]
\end{thm}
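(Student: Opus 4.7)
The plan is to route the duality through the Schur expansions of $G_\lambda$ and $g_\mu$. By Theorem~\ref{thm:Schur_exp G,g for Par_n} (obtained by applying the Cauchy--Binet theorem to the bialternants \eqref{eq:defG'} and \eqref{eq:defg}) we have Schur expansions
\[
  G_\lambda(\vx;\va,\vb) = \sum_\nu c_{\lambda\nu}\, s_\nu(\vx), \qquad
  g_\mu(\vx;\va,\vb) = \sum_\nu d_{\mu\nu}\, s_\nu(\vx),
\]
in which each coefficient is an $n\times n$ minor (indexed by $\{m_j=\nu_j+n-j\}$) of a rectangular matrix $A'$ or $B'$ with rows indexed by $\{1,\dots,n\}$ and columns by $m\in\ZZ_{\ge 0}$, obtained by factoring the bialternant into row-parameters and column-variables. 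By orthonormality of the Schur basis under the Hall pairing, $\langle G_\lambda, g_\mu\rangle = \sum_\nu c_{\lambda\nu} d_{\mu\nu}$, so the theorem is equivalent to the matrix identity $\sum_\nu c_{\lambda\nu} d_{\mu\nu} = \delta_{\lambda\mu}$.

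A second application of Cauchy--Binet, run in reverse, collapses this sum into a single determinant $\det\bigl(A'(B')^{T}\bigr)$, and the resulting convolution simplifies via $h_n[Y+Z]=\sum_{a+b=n} h_a[Y]h_b[Z]$ to give the clean expression
\[
  M_{i,j} = h_{\mu_j-\lambda_i+i-j}\bigl[(A_{\lambda_i} - A_{\mu_j-1}) + (B_{j-1} - B_{i-1})\bigr].
\]

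It remains to evaluate $\det M$. When $\lambda = \mu$, partition monotonicity forces the $h$-subscript to be strictly negative for $i<j$ and exactly zero on the diagonal, so $M$ is lower triangular with $1$'s on the diagonal and $\det M = 1$. When $\lambda\ne\mu$, let $k$ be the smallest index with $\lambda_k\ne\mu_k$ and, without loss of generality, assume $\lambda_k<\mu_k$: I would show that the $k$-th column of $M$ vanishes identically. For $i<k$ the subscript is negative (since $\lambda_i=\mu_i\ge\mu_k>\lambda_k$), and for $i\ge k$ the plethystic argument equals $-(\alpha_{\lambda_i+1}+\dots+\alpha_{\mu_k-1})-(\beta_k+\dots+\beta_{i-1})$, a sum of $\mu_k-1-\lambda_i+i-k$ scalar variables with a sign flip, whose cardinality the subscript $\mu_k-\lambda_i+i-k$ exceeds by exactly $1$; then the rule $h_n[-Y]=(-1)^n e_n[Y]$ together with $e_n$ of fewer than $n$ scalar variables being $0$ forces the entry to vanish. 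The case $\lambda_k>\mu_k$ is handled by the symmetric argument on row $k$.

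The main obstacle is this concluding case analysis, as it requires a careful match between the number of $\alpha$- and $\beta$-variables entering the plethystic argument and the value of the $h$-subscript so that the resulting elementary symmetric function vanishes. Once the reduction to the single determinant $\det M$ is in hand, however, the vanishing is essentially a short piece of combinatorial bookkeeping.
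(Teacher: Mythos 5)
Your overall route coincides with the paper's: expand both polynomials in Schur functions via Cauchy--Binet, use orthonormality to reduce the pairing to $\sum_\nu C_{\lambda,\nu}c_{\mu,\nu}$, collapse that sum by a second (reverse) application of Cauchy--Binet into a single determinant whose $(i,j)$-entry is $h_{\mu_i-\lambda_j-i+j}[A_{\lambda_j}-A_{\mu_i-1}+B_{i-1}-B_{j-1}]$ (your $M$ is the transpose of this), and then evaluate. Your evaluation of $\det M$ is actually cleaner than the paper's in the cases it covers: the paper inducts on $n$, splitting off indices with $\lambda_k=\mu_k$ via Lemma~\ref{lem:det(h)=det*det} and only then arguing triangularity, whereas your ``first differing index'' column argument handles $\lambda\subseteq\mu$, $\lambda\ne\mu$ in one stroke, and the $\lambda=\mu$ triangularity is correct.

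However, there is a genuine gap in the case $\lambda_k>\mu_k$. The matrix $M$ is not symmetric in $\lambda$ and $\mu$ --- note the asymmetric shift $A_{\lambda_i}$ versus $A_{\mu_j-1}$ --- and the claimed vanishing of row $k$ fails whenever some $j<k$ has $\lambda_j=\lambda_k$: then $\mu_j=\lambda_j=\lambda_k$, the plethystic argument of $M_{k,j}$ contains the \emph{positive} summand $A_{\lambda_k}-A_{\mu_j-1}=\alpha_{\lambda_k}$, and the variable-counting step breaks down. Concretely, for $\lambda=(2,2,1)$, $\mu=(2,1,1)$, $n=3$, one computes $M_{2,1}=h_1[\alpha_2-\beta_1]=\alpha_2-\beta_1\ne0$ and $M_{3,2}=\alpha_1-\beta_2\ne 0$, so neither row $2$ nor column $2$ vanishes; $\det M=0$ here only because the entries $M_{i,j}$ with $i\le 2\le j$ all vanish, i.e.\ because of a $2\times(n-1)$ zero block, not a zero line. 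The repair is easy and is what the paper does: the case $\lambda_k>\mu_k$ at the first differing index is exactly the case $\lambda\not\subseteq\mu$, and this is already disposed of \emph{before} forming $M$, since $C_{\lambda,\nu}$ is supported on $\nu\supseteq\lambda$ and $c_{\mu,\nu}$ on $\nu\subseteq\mu$, so $\sum_\nu C_{\lambda,\nu}c_{\mu,\nu}$ is an empty sum unless $\lambda\subseteq\mu$. (Alternatively, invoke the general vanishing Lemma~\ref{lem:det(h)=0}, whose proof is precisely the zero-block argument above.) With that reduction in place, your column-$k$ argument for $\lambda\subsetneq\mu$ and your triangularity argument for $\lambda=\mu$ complete the proof.
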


Using lattice paths and the Lindstr\"om--Gessel--Viennot lemma,
we find combinatorial interpretations for the coefficients in the Schur
expansions of \( G_\lambda(\vx;\va,\vb) \) and \( g_\lambda(\vx;\va,\vb) \) in
terms of tableaux; see Theorems~\ref{thm:C_lm} and \ref{thm:c_lm}. As a
consequence we obtain combinatorial interpretations for these polynomials using
pairs of tableaux; see Corollaries~\ref{cor:G_comb2} and \ref{cor:g_comb2}. We
also obtain the following positivity phenomena.

\begin{thm}\label{thm:schur_pos_intro}
  The refined canonical stable Grothendieck polynomial $G_\lambda(\vx;\va,-\vb)$
  and the refined dual canonical stable Grothendieck polynomial
  \( g_\lambda(\vx;-\va,\vb) \) are Schur-positive,
  where \(-\va=(-\alpha_1,-\alpha_2,\dots)\) and \(-\vb=(-\beta_1,-\beta_2,\dots)\).
\end{thm}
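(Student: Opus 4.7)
The plan is to deduce Schur positivity from the combinatorial Schur expansions provided by Theorems~\ref{thm:C_lm} and \ref{thm:c_lm}. Writing
\[
  G_\lambda(\vx;\va,\vb) = \sum_\mu C_{\lambda\mu}(\va,\vb)\, s_\mu(\vx), \qquad
  g_\lambda(\vx;\va,\vb) = \sum_\mu c_{\lambda\mu}(\va,\vb)\, s_\mu(\vx),
\]
the task reduces to showing that the specialized coefficients \( C_{\lambda\mu}(\va,-\vb) \) and \( c_{\lambda\mu}(-\va,\vb) \) lie in \( \ZZ_{\ge 0}[\alpha_1,\alpha_2,\dots,\beta_1,\beta_2,\dots] \). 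Since the Schur expansions arise via Cauchy--Binet applied to the bialternant formulas \eqref{eq:defG'} and \eqref{eq:defg}, the coefficients naturally decompose as \( n\times n \) determinants in entries that are themselves generating functions for single paths; the Lindstr\"om--Gessel--Viennot lemma converts each determinant into a signed sum over tuples of non-intersecting lattice paths, and, as in the Jacobi--Trudi case, only the identity permutation survives, leaving a manifestly non-negative sum over certain tableau-like objects.

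Concretely, the plan for \( G_\lambda \) is as follows. First, I would expand the determinant in \eqref{eq:defG'} using Cauchy--Binet; each entry \( h_{\lambda_i+n-i}[x_j \ominus (A_{\lambda_i}-B_{i-1})] \) unfolds into
\[
  \sum_{a-b=\lambda_i+n-i} h_a[x_j]\, h_b[A_{\lambda_i}-B_{i-1}],
\]
and \( h_b[A_{\lambda_i}-B_{i-1}] \) is a polynomial in the \( \alpha_k \)'s and \( \beta_k \)'s whose sign depends only on the number of \( \beta \)-factors appearing (because of the \( - \) sign in the plethystic subtraction \( -B_{i-1} \)). Next, after recognizing the \( x \)-factor as a ratio of alternants equal to \( s_\mu(\vx_n) \), I would read off \( C_{\lambda\mu}(\va,\vb) \) as a determinant whose entries are weighted path sums. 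Applying LGV, the only surviving permutation in the expansion is the identity (by the strict increase of \( \lambda_i+n-i \)), so \( C_{\lambda\mu}(\va,\vb) \) becomes a sum over tuples of non-intersecting paths (equivalently, the tableaux described in Theorem~\ref{thm:C_lm}), each contributing a monomial whose only source of minus signs is the \( -\vb \) contributed by plethystic subtraction. Substituting \( \vb \mapsto -\vb \) kills every such sign, so \( C_{\lambda\mu}(\va,-\vb) \in \ZZ_{\ge 0}[\va,\vb] \).

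The argument for \( g_\lambda(\vx;-\va,\vb) \) is parallel, using \eqref{eq:defg} in place of \eqref{eq:defG'}. Here the entries involve \( h_{\lambda_i+n-i}[x_j - A_{\lambda_i-1}+B_{i-1}] \), so in the plethystic expansion it is the \( A \)-part that contributes minus signs; the same LGV argument produces \( c_{\lambda\mu}(\va,\vb) \) as a signed sum over non-intersecting path tuples (the tableaux of Theorem~\ref{thm:c_lm}) in which all minus signs come from the \( \alpha \)-variables, and substituting \( \va \mapsto -\va \) makes every weight a positive monomial.

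The main technical obstacle I anticipate is justifying that the LGV step produces \emph{genuinely} non-intersecting configurations with a uniform sign convention: one must check that the strict increase \( \lambda_i+n-i > \lambda_{i+1}+n-i-1 \) at the endpoints forces the only surviving permutation in the determinantal expansion to be the identity, and that the plethystic subtraction is the \emph{only} source of signs in the entries (so that the path weights, monomial by monomial, are products of \( \alpha_k \)'s with one fixed sign and \( \beta_k \)'s with the opposite sign). Once this sign-tracking is done carefully — essentially a bookkeeping exercise on the definition of \( h_n[Y-Z] \) and \( h_n[Y\ominus Z] \) — the positivity claims fall out immediately from the combinatorial interpretations already established in Theorems~\ref{thm:C_lm} and \ref{thm:c_lm}.
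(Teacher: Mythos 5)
Your proposal is correct and takes essentially the same route as the paper: the paper likewise reduces the claim to nonnegativity of the Schur coefficients \( C_{\lambda,\mu}(\va,-\vb) \) and \( c_{\lambda,\mu}(-\va,\vb) \), interprets each entry of the coefficient determinants as a single-path generating function whose step weights are the binomials \( \alpha-\beta \) (resp.\ \( -\alpha+\beta \)), and applies the Lindstr\"om--Gessel--Viennot lemma so that the surviving nonintersecting configurations carry manifestly nonnegative weights once \( \vb \) (resp.\ \( \va \)) is negated. The technical points you defer --- the path interpretation of \( h_m[A_r-B_s] \) and the endpoint compatibility forcing the identity permutation --- are precisely the content of Propositions~\ref{prop: hm[A-B]} and \ref{prop: hm[-A+B]} and Lemmas~\ref{lem: h[A-B]} and \ref{lem: h[-A+B]} in the paper.
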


Recall that there are two combinatorial models for \( G_\lambda(\vx) \): pairs
of tableaux due to Lenart \cite{Lenart2000} and set-valued tableaux due to Buch
\cite{Buch2002}. Similarly, Lam and Pylyavskyy \cite{LP2007} found two
combinatorial models for \( g_\lambda(\vx) \). Our combinatorial models in
Corollaries~\ref{cor:G_comb2} and \ref{cor:g_comb2} generalize those of Lenart
and Lam--Pylyavskyy. In the forthcoming paper \cite{Hwang-preprint} we also give the following combinatorial interpretations
for \( G_\lambda(\vx;\va,\vb) \) and \( g_\lambda(\vx;\va,\vb) \) generalizing
Buch's and Lam--Pylyavskyy's; see \cite{Hwang-preprint} for precise definitions of undefined terms.

\begin{thm}[\cite{Hwang-preprint}]\label{thm:comb_intro}
  For a partition \( \lambda \),
  \begin{align*}
    G_\lambda(\vx;\va,\vb) &= \sum_{T\in \MMSVT(\lambda)} \wt(T),\\
    g_\lambda(\vx;\va,\vb) &= \sum_{T\in\MRPP(\lambda)} \wt(T),
  \end{align*}
  where \( \MMSVT(\lambda) \) and \( \MRPP(\lambda) \) stand
  for the set of marked multiset-valued tableaux and marked reverse plane partitions
  of shape \( \lambda \), respectively.
\end{thm}

Using the combinatorial interpretations in
Theorem~\ref{thm:comb_intro} one can naturally extend the definition
of \( G_\lambda(\vx;\va,\vb) \) and \( g_\lambda(\vx;\va,\vb) \) to
skew shapes. In our forthcoming paper \cite{Hwang-preprint}, we show
that \( G_\lm(\vx;\va,\vb) \) and \( g_\lm(\vx;\va,\vb) \) are
symmetric in \( \vx \), and provide Jacobi--Trudi-like formulas for these
polynomials; see Theorem~\ref{thm:skew_Gg_JT_formula}. As in the
canonical stable Grothendieck polynomial case, applying the involution
\( \omega \) to these polynomials can be described nicely as follows.

\begin{thm}\label{thm:omega-intro}
  For partitions \( \lambda \) and \( \mu \) with \( \mu\subseteq\lambda \),
  we have
  \begin{align*}
    \omega(G_\lm(\vx;\va,\vb)) &= G_{\lambda'/\mu'}(\vx;-\vb,-\va), \\
    \omega(g_\lm(\vx;\va,\vb)) &= g_{\lambda'/\mu'}(\vx;-\vb,-\va).
  \end{align*}
\end{thm}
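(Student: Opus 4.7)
The plan is to derive both identities from the Jacobi--Trudi formulas of Theorem~\ref{thm:main_G_intro}, specialized to the non-flagged skew case $r_i=1$, $s_i=\infty$, so that $X_{[r_j,s_i]}=X$. I would treat the $g$-case first since it is cleanest, and then show that the $G$-case is identical save for the treatment of the scalar prefactors.

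Formulas (3) and (4) of Theorem~\ref{thm:main_G_intro} give
\[
g_\lm(\vx;\va,\vb)=\det\bigl(h_{\lambda_i-\mu_j-i+j}[X+Y_{ij}]\bigr) \qand g_{\lambda'/\mu'}(\vx;\va,\vb)=\det\bigl(e_{\lambda_i-\mu_j-i+j}[X+Z_{ij}]\bigr),
\]
with $Y_{ij}=-A_{\lambda_i-1}+A_{\mu_j}+B_{i-1}-B_{j-1}$ and $Z_{ij}=-A_{i-1}+A_{j-1}+B_{\lambda_i-1}-B_{\mu_j}$. Since $\omega$ acts only on $\vx$, and $h_b[Y_{ij}], e_b[Z_{ij}]$ are scalars from that standpoint, it suffices to establish the entry-wise identity
\[
\omega\bigl(h_m[X+Y_{ij}]\bigr)\;=\;e_m[X+Z_{ij}]\big|_{(\va,\vb)\mapsto(-\vb,-\va)}.
\]
Expanding both sides via $h_m[X+Y]=\sum_{a+b=m}h_a(\vx)\,h_b[Y]$, $e_m[X+Z]=\sum_{a+b=m}e_a(\vx)\,e_b[Z]$, and $\omega(h_a)=e_a$, this reduces to the parameter-only identity $h_b[Y_{ij}]=e_b[Z_{ij}]|_{(\va,\vb)\mapsto(-\vb,-\va)}$, which I would verify by generating functions: $H(t)[Y_{ij}]$ factors as a product of $(1-\alpha_k t)^{\pm 1}$ and $(1-\beta_k t)^{\pm 1}$, $E(t)[Z_{ij}]$ factors as a product of $(1+\alpha_k t)^{\pm 1}$ and $(1+\beta_k t)^{\pm 1}$, and the substitution flips the signs in each factor while the index ranges line up precisely because the roles of the $\alpha$- and $\beta$-indices swap between $Y_{ij}$ and $Z_{ij}$.

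For the $G$-case I would run the same argument with formulas (1) and (2), namely
\[
G_\lm=C\cdot\det\bigl(h_m[X\ominus W_{ij}]\bigr) \qand G_{\lambda'/\mu'}=D\cdot\det\bigl(e_m[X\ominus W'_{ij}]\bigr),
\]
where $W_{ij}=A_{\lambda_i}-A_{\mu_j}-B_{i-1}+B_j$ and $W'_{ij}=A_{i-1}-A_j-B_{\lambda_i}+B_{\mu_j}$. The presence of $\ominus$ (summing over $a-b=m$) does not affect the argument since $\omega$ still passes through the sum entry-wise, and the analog identity $h_b[W_{ij}]=e_b[W'_{ij}]|_{(\va,\vb)\mapsto(-\vb,-\va)}$ is proved by the same generating-function comparison. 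The only extra step is checking the prefactor identity $\omega(C)=D|_{(\va,\vb)\mapsto(-\vb,-\va)}$: applying $\omega$ to $E(t)[X]$ gives $H(t)[X]$, and evaluating at $t=-\beta_i$ on one side while substituting $\alpha_i\mapsto-\beta_i$ on the other, both sides equal $\prod_{i,l}(1+\beta_i x_l)^{-1}$.

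The main obstacle is not any individual computation---each is routine---but keeping the plethystic bookkeeping honest: the substitution $(\va,\vb)\mapsto(-\vb,-\va)$ must be interpreted as numerical substitution in the polynomial coefficients of each entry, and one must verify that on the generating-function level this is consistent with the swap $\alpha_k\leftrightarrow-\beta_k$, $\beta_k\leftrightarrow-\alpha_k$ of atoms. A quick check with $\lambda=(2)$, $\mu=\emptyset$, where $\omega(g_{(2)})=e_2-\alpha_1e_1$ matches $g_{(1,1)}(\vx;-\vb,-\va)$, confirms the correct interpretation and would guide the write-up.
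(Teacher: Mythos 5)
Your proposal is correct, but it takes a genuinely different route from the paper. The paper first expands $G_\lm(\vx;\va,\vb)$ and $g_\lm(\vx;\va,\vb)$ (and their conjugate-shape counterparts) into skew Schur functions by applying the Cauchy--Binet theorem to the Jacobi--Trudi determinants (Theorems~\ref{thm:G=CsC}, \ref{thm:G=DSD}, \ref{thm:g=csc}, \ref{thm:g=dsd}), proves the coefficient identities $C_{\lambda,\nu}(\va,\vb)=D_{\lambda,\nu}(-\vb,-\va)$ and $c_{\lambda,\nu}(\va,\vb)=d_{\lambda,\nu}(-\vb,-\va)$ via $h_m[Z]=(-1)^m e_m[-Z]$, and then invokes $\omega(s_{\nu/\rho})=s_{\nu'/\rho'}$ together with $\omega(C)=D|_{(\va,\vb)\mapsto(-\vb,-\va)}$. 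You instead apply $\omega$ entrywise to the Jacobi--Trudi determinants themselves and reduce everything to the parameter-only identities $h_b[Y_{ij}]=e_b[Z_{ij}]|_{(\va,\vb)\mapsto(-\vb,-\va)}$ and $h_b[W_{ij}]=e_b[W'_{ij}]|_{(\va,\vb)\mapsto(-\vb,-\va)}$, which your generating-function check does verify (the index ranges for the $\alpha$'s and $\beta$'s do swap correctly between the $h$- and $e$-versions). This is shorter and skips the Cauchy--Binet step entirely; the underlying mechanism is the same relation $h_m[Z]=(-1)^m e_m[-Z]$ combined with the $\alpha\leftrightarrow\beta$ swap, just deployed at the level of individual entries rather than of the expansion coefficients. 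What the paper's longer route buys is the skew Schur expansions themselves (which generalize the Chan--Pflueger result and are of independent interest), whereas your route buys brevity. The only points to keep honest in a write-up are the ones you already flag: for the $G$-case the entries $h_m[X\ominus W_{ij}]$ and the prefactors $C$, $D$ involve infinite sums and products, so $\omega$ must be understood as the continuous extension to the completion of $\QQ[\va,\vb]\otimes\Lambda$ — but the paper's own proof makes exactly the same move when computing $\omega(C(\vb))$, so this is not a gap relative to the paper's standards.
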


Our generalizations \( G_\lambda(\vx;\va,\vb) \) and \( g_\lambda(\vx;\va,\vb) \)
generalize several well-studied variations of Grothendieck polynomials as follows.
Here \( \bm{0} = (0,0,\dots) \), \( \bm{1}=(1,1,\dots) \),
\( \va_0 = (\alpha,\alpha,\dots) \) and \( \vb_0 = (\beta,\beta,\dots) \).

\begin{center}
  \begin{tabular}{|c|c|c|}
    \hline
    Variations of Grothendieck polynomials & introduced in & how to specialize \\ \hhline{|=|=|=|}
    \( G_{\nu/\lambda}(\vx) \) & Buch~\cite{Buch2002} &
      \( G_{\nu/\lambda}(\vx;\bm{0},\bm{1}) \)  \\ \hline
    \( g_{\lambda/\mu}(\vx) \) & Lam--Pylyavskyy~\cite{LP2007} &
      \( g_{\lambda/\mu}(\vx;\bm{0},\bm{1}) \) \\  \hline
    \( G^{(\alpha,\beta)}_\lambda(\vx) \) & Yeliussizov~\cite{Yeliussizov2017} &
        \( G_{\lambda}(\vx;\va_0,-\vb_0) \) \\  \hline
    \( g^{(\alpha,\beta)}_\lambda(\vx) \) & Yeliussizov~\cite{Yeliussizov2017} &
        \( g_{\lambda}(\vx;-\va_0,\vb_0) \) \\  \hline
    \( RG_\sigma(\vx;\vb) \) & Chan--Pflueger~\cite{CP21:grothendieck} &
      \( G_\sigma(\vx;\bm{0},\vb) \) \\  \hline
    \( \tg_{\lambda/\mu}(\vx;\vb) \) & Galashin--Grinberg--Liu~\cite{GGL2016} &
      \( g_{\lambda/\mu}(\vx;\bm{0},\vb) \) \\  \hline
  \end{tabular}
\end{center}

\medskip

The remainder of the paper is organized as follows.
In Section~\ref{sec:preliminary}, we briefly recall
necessary definitions and results in symmetric functions.
In Section~\ref{sec:schur_expansion}, we give Schur expansions and
Jacobi--Trudi-like formulas (Theorem~\ref{thm:JT_ab_intro}) for \(
G_\lambda(\vx_n;\va,\vb) \) and \( g_\lambda(\vx_n;\va,\vb) \) using the
Cauchy--Binet theorem and prove their duality with respect to the Hall inner product.
In Section~\ref{sec:lattice}, we give tableau models for \( G_\lambda(\vx;\va,\vb)
\) and \( g_\lambda(\vx;\va,\vb) \) using lattice paths and the
Lindstr\"om--Gessel--Viennot lemma. From these combinatorial models, we obtain
Schur positivity of \( G_\lambda(\vx;\va,-\vb) \) and \( g_\lambda(\vx;-\va,\vb) \).
In Section~\ref{sec:var JT}, we modify the Jacobi--Trudi-like formulas
in Section~\ref{sec:schur_expansion}.
The modified formulas are later used to show that 
\(G_\lambda(\vx;\va,\vb) \) and \(g_\lambda(\vx;\va,\vb) \) 
defined by bialternant formulas are special cases of
their skew versions defined by combinatorial formulas.
In Section~\ref{sec:omega_involution}, we give skew Schur expansions
for \( G_{\lambda/\mu}(\vx;\va,\vb) \) and \( g_{\lambda/\mu}(\vx;\va,\vb) \).
Using these expansions, we show that the refined canonical stable
Grothendieck polynomials and their duals behave nicely under 
the involution \( \omega \).
Finally, in Section~\ref{sec:further_study}, we suggest some future directions of study.

\section{Preliminaries}
\label{sec:preliminary}

In this section, we set up notations and give the necessary background.

Throughout this paper, we denote by \( \ZZ \) (resp.~\( \NN \)) the set of integers (resp.~nonnegative integers).
For \( n\in\NN \) we write \( [n]=\{1,2,\dots,n\} \).

\subsection{Partitions and tableaux}

A \emph{partition} is a weakly decreasing sequence \(
\lambda=(\lambda_1,\dots,\lambda_\ell) \) of positive integers. Each \(
\lambda_i \) is called a \emph{part}, and the \emph{length}
\( \ell(\lambda)=\ell \) of \( \lambda \) is the number of parts in \( \lambda \).
We will also consider a partition \( \lambda \) as an infinite sequence
by appending zeros at the end:
\( \lambda=(\lambda_1,\dots,\lambda_{\ell},0,0,\dots) \).
In other words, we define \( \lambda_i=0 \) for \( i>\ell(\lambda) \).
We denote by \( \Par_n \) the set of partitions with at most \( n \) parts,
and by \( \Par \) the set of all partitions.

The \emph{Young diagram} of a partition \(\lambda=(\lambda_1,\dots,\lambda_n)\)
is the set \( \{ (i,j)\in \ZZ^2 : 1\leq i \leq n, 1 \leq j \leq \lambda_i \} \). 
We will identify  a partition \(\lambda\) with its Young diagram.
Therefore a partition \( \lambda \) is considered as a sequence \(
(\lambda_1,\dots,\lambda_n) \) of integers 
and also as a set of pairs \( (i,j) \) of integers.
Each element \((i,j) \in \lambda\) is called a \emph{cell} of \(\lambda\).
The Young diagram of \( \lambda \) is visualized by placing a square
in the \( i \)th row and the \( j \)th column for each cell \( (i,j) \)
in \( \lambda \) using the matrix coordinates.
The \emph{transpose} \( \lambda' \) of \( \lambda \) is the partition
whose Young diagram is obtained from that of \( \lambda \)
by reflecting across the diagonal.
The \emph{content} \( c(i,j) \) of a cell \( (i,j)\in\lambda \) is \( j-i \).
See Figure~\ref{fig:yd}.

\begin{figure}
  \begin{ytableau}
    0 & 1 & 2 &  3  \\
    -1 & 0 & 1 \\
    -2 
  \end{ytableau} \qquad \qquad
  \begin{ytableau}
    0 & 1 & 2   \\
    -1 & 0  \\
    -2 & -1 \\
    -3
  \end{ytableau}
  \caption{The Young diagram of \( \lambda=(4,3,1) \) on the left and that of its transpose
    \( \lambda'=(3,2,2,1) \) on the right.
  The integer in each cell indicates the content of the cell.}
\label{fig:yd}
\end{figure}

For two partitions \(\lambda,\mu \in \Par\), we write \(\mu\subseteq \lambda\)
if \(\mu\) is contained in \(\lambda\) as Young diagrams. Equivalently, \(\mu
\subseteq \lambda\) if \( \ell(\mu)\le \ell(\lambda) \) and 
\(\mu_i \leq \lambda_i\) for all \(1\leq i \leq \ell(\lambda)\).
For two partitions \(\lambda,\mu \in \Par\) with \(\mu \subseteq \lambda\),
the \emph{skew shape} \(\lambda/\mu\) is defined to be
\(\lambda-\mu=\{ (i,j): 1\leq i \leq \ell(\lambda), \mu_i < j \leq \lambda_i \}\).
We denote by \( |\lm| \) the number of cells in \( \lm \).
We consider a partition \( \lambda \) as the skew shape \( \lambda/\emptyset \)
where \( \emptyset \) stands for the empty partition \( (0,0,\dots) \).

A \emph{semistandard Young tableau} of shape \(\lambda/\mu \) is
a filling \(T\) of the cells in \(\lambda/\mu\) with positive integers
such that the integers are weakly increasing in each row and
strictly increasing in each column.
Let \(\SSYT(\lambda/\mu)\) be the set of semistandard Young tableaux
of shape \(\lambda/\mu\).
See Figure~\ref{fig:ssyt}.

\begin{figure}
  \begin{ytableau}
    \none & \none & 1 & 1 & 3 \\
    \none & 1 & 3 & 4 \\
    2 & 2
  \end{ytableau}
  \caption{A semistandard Young tableau of shape \( (5,4,2)/(2,1) \).}
\label{fig:ssyt}
\end{figure}

\subsection{Symmetric functions}
\label{sec:symmetric_function}
Let \( \vx=(x_1,x_2,\dots) \) be a set of formal variables, and \( \vx_n =
(x_1,\dots,x_n) \). A \emph{symmetric polynomial} is a polynomial \(
f(\vx_n)\in\QQ[x_1,\dots,x_n] \) that is invariant under permuting the
variables. A \emph{symmetric function} is a formal power series \(
f(\vx)\in\QQ[[x_1,x_2,\dots]] \) such that \( f(\vx) \) is of bounded-degree and
invariant under permuting the variables. Let \( \Lambda \) be the \( \QQ
\)-algebra of all symmetric functions in \( \QQ[[x_1,x_2,\dots]] \). For \(
k\ge0 \), let \( \Lambda^k \) be the subspace of \( \QQ[[x_1,x_2,\dots]] \)
consisting of homogeneous symmetric functions of degree \( k \) together with \(
0 \). Then \( \Lambda=\oplus_{k\ge0}\Lambda^k \). The \emph{completion} \(
\hat{\Lambda} \) of \( \Lambda \) is the set of symmetric functions with
possibly unbounded degree. In other words, each element \( f\in \hat{\Lambda} \)
is of the form \( f=\sum_{k\ge0}f_k \), where \( f_k\in \Lambda^k \). Good
general references for the symmetric function theory are \cite{Macdonald} and
\cite[Chapter 7]{EC2}.

For \( n\ge 1 \), the \emph{complete homogeneous symmetric function}
\( h_n(\vx) \) is defined to be
\[
  h_n(\vx) = \sum_{i_1\le\dots\le i_n} x_{i_1} \cdots x_{i_n},
\]
and the \emph{elementary symmetric function} \( e_n(\vx) \) is defined to be
\[
  e_n(\vx) = \sum_{i_1<\dots<i_n} x_{i_1} \cdots x_{i_n}.
\]
By convention, we let \( h_0(\vx) = e_0(\vx) = 1 \) and \( h_n(\vx) = e_n(\vx) =
0 \) for \( n < 0 \). For a partition \( \lambda=(\lambda_1,\dots,\lambda_\ell)
\), we define \( h_{\lambda}(\vx) = h_{\lambda_1}(\vx)\cdots
h_{\lambda_\ell}(\vx) \) and \( e_{\lambda}(\vx) = e_{\lambda_1}(\vx)\cdots
e_{\lambda_\ell}(\vx) \). It is well known that \( \{ h_\lambda(\vx):
|\lambda|=k \} \) and \( \{ e_\lambda(\vx): |\lambda|=k\} \) are bases for \(
\Lambda^k \), \( k\ge0 \). Hence each of the two families \( \{h_1(\vx),
h_2(\vx),\dots\} \) and \( \{e_1(\vx), e_2(\vx),\dots\} \) is algebraically
independent, and we have \( \Lambda = \QQ[h_1(\vx),h_2(\vx),\dots] =
\QQ[e_1(\vx),e_2(\vx),\dots] \). Define an endomorphism \( \omega \) of the \(
\QQ \)-algebra \( \Lambda \) given by
\begin{align*} 
  \omega:
  \Lambda &\longrightarrow \Lambda \\
  h_n(\vx) &\longmapsto e_n(\vx).
\end{align*}
It is well known that 
the map \( \omega \) is an involution, i.e., \( \omega^2 = \operatorname{id}_\Lambda \).

Another important basis is the Schur function basis.
For a partition \( \lambda\in\Par_n \), the \emph{Schur polynomial}
\( s_\lambda(\vx_n) \) of shape \( \lambda \) is defined as follows:
\[
  s_\lambda(\vx_n)
  = \frac{\det (x_j^{\lambda_i+n-i})_{1\le i,j\le n}}
    {\prod_{1\le i<j\le n}(x_i-x_j)}.
\]
The \emph{Schur function} \( s_\lambda(\vx) \) is defined to be the
coefficient-wise limit of \( s_\lambda(\vx_n) \) as \( n\to\infty \). By
definition, Schur functions are symmetric, and it is known that \(
\{s_\lambda(\vx):|\lambda|=k\} \) is a basis for \( \Lambda^k \), \( k\ge0 \).
They have a combinatorial description which naturally extends to skew shapes as
follows. For partitions \( \lambda \) and \( \mu \) with \( \mu\subseteq\lambda
\), the Schur function \( s_{\lambda/\mu}(\vx) \) of shape \( \lambda/\mu \) is
defined to be the generating function for semistandard Young tableaux of shape
\( \lambda/\mu \):
\[
  s_{\lambda/\mu}(\vx) = \sum_{T\in\SSYT(\lambda/\mu)} \vx^T,
\]
where \( \vx^T=x_1^{m_1} x_2^{m_2} \cdots \) and
\( m_i \) is the number of appearances of \( i \) in \( T \).
Schur functions of skew shapes are also symmetric.

The \emph{Hall inner product} is the inner product on \( \Lambda \) defined by
\[
  \langle s_\lambda(\vx), s_\mu(\vx) \rangle = \delta_{\lambda,\mu}.
\]
This inner product is naturally extended to \( \hat{\Lambda}\times \Lambda \)
because every element \( f\in \hat{\Lambda} \) can be written as \(
f=\sum_{\lambda\in \Par}c_\lambda s_\lambda(\vx) \). Note, however, that it
cannot be extended to \( \hat{\Lambda}\times \hat{\Lambda} \) in the natural way
since, for instance, \( \langle \sum_{n\ge0} s_{(n)}(\vx) , \sum_{n\ge0} (-1)^n
s_{(n)}(\vx) \rangle =\sum_{n\ge0}(-1)^n \) does not converge.

The Jacobi--Trudi formula and its dual give expansions of Schur functions
in terms of \( h_n(\vx) \) and \( e_n(\vx) \) as determinants:
for partitions \( \mu\subseteq\lambda \),
\begin{align*}
  s_{\lambda/\mu}(\vx)
  &= \det \left( h_{\lambda_i-\mu_j-i+j}(\vx) \right)_{i,j=1}^{\ell(\lambda)},
  \mbox{ and} \\
  s_{\lambda'/\mu'}(\vx)
  &= \det \left( e_{\lambda_i-\mu_j-i+j}(\vx) \right)_{i,j=1}^{\ell(\lambda)}.
\end{align*}
Since the involution \( \omega \) is an algebra homomorphism, we have
\[
  \omega \left( s_{\lambda/\mu}(\vx) \right) = s_{\lambda'/\mu'}(\vx).
\]

In this paper, we consider the completion \( \hat{\Lambda}_{\QQ[\va,\vb]} \)
of \( \Lambda_{\QQ[\va,\vb]}=\QQ[\va,\vb]\otimes\Lambda \)
instead of \( \Lambda \), because the stable Grothendieck polynomials are
unbounded-degree symmetric functions, and our generalizations, \(
G_\lambda(\vx;\va,\vb) \) and \( g_\lambda(\vx;\va,\vb) \), need countably many
additional parameters \( \alpha_1, \alpha_2, \dots, \beta_1, \beta_2, \dots \).
Moreover, we lift the Hall inner product from \( \hat{\Lambda}\times\Lambda \)
to \( \hat{\Lambda}_{\QQ[\va,\vb]}\times\Lambda_{\QQ[\va,\vb]} \) via
\( \QQ[\va,\vb] \)-bilinearity.

\subsection{Plethystic substitutions}
\label{sec:plethystic}
We define plethystic substitution via the power sum symmetric functions.
For \( n\ge 1 \),
the \emph{power sum symmetric function} \( p_n(\vx) \) is defined to be
\[
  p_n(\vx) = x_1^n+ x_2^n+\cdots,
\]
and \( p_\lambda(\vx) := p_{\lambda_1}(\vx)\cdots p_{\lambda_\ell}(\vx) \) for a
partition \( \lambda=(\lambda_1,\dots,\lambda_\ell) \) with \(
\ell(\lambda)=\ell\ge1 \) and \( p_\emptyset(\vx):=1 \). Then \( \{
p_\lambda(\vx) \}_{\lambda\in\Par} \) is also a basis for \( \Lambda \), and \(
\Lambda=\QQ[p_1(\vx),p_2(\vx),\dots] \).

For a formal power series \( Z\in\QQ[[z_1,z_2,\dots]] \),
the \emph{plethystic substitution \( p_n[Z] \) of \( Z \) into \( p_n(\vx) \)} is
defined to be the formal power series obtained from \( Z \)
by replacing each \( z_i \) by \( z_i^n \).
Since \( \{p_n(\vx)\}_{n\ge 1} \) are algebraically independent,
and generate \( \Lambda \), 
we can extend \( p_n[Z] \) to the plethystic substitution \( f[Z] \)
into any symmetric function \( f(\vx) \).
By definition, 
\begin{equation}
  \label{eq:f[z]=f(z)}
  f[z_1+z_2+\cdots] = f(z_1,z_2,\dots).
\end{equation}
For more properties of plethystic substitution,
we refer the reader to \cite{Loehr_2011}.

For integers \( r,s \), and \( n \), we define
\[
  X=x_1+x_2+\cdots, \qquad
  X_n=x_1+x_2+\dots+x_n, \qquad
  X_{[r,s]}=x_r+x_{r+1}+\dots+x_s,
\]
\[
  A_n = \alpha_1+\alpha_2+\dots+\alpha_n,\qquad
  B_n = \beta_1+\beta_2+\dots+\beta_n,
\]
where \( X_n=A_n=B_n=0 \) if \( n\le 0 \) and \( X_{[r,s]}=0 \) if \( r>s \).

In this paper, we only consider the plethystic substitution into
\( h_n(\vx) \) and \( e_n(\vx) \).
We will use the following well-known properties frequently,
which follows from \cite[Theorem~6]{Loehr_2011} and \cite[Proposition~2.1]{Kim_JT22}.
\begin{prop}\label{prop:h[-Z]}
  For \( n\ge 0 \) and \( Z\in\QQ[[z_1,z_2,\dots]] \), we have
  \[
    h_n[-Z] = (-1)^n e_n[Z] \qand e_n[-Z] = (-1)^n h_n[Z].
  \]
  For \( Z_1,Z_2\in\QQ[[z_1,z_2,\dots]] \),
  \[
    h_n[Z_1+Z_2] = \sum_{a+b=n} h_a[Z_1] h_b[Z_2] \qand
    e_n[Z_1+Z_2] = \sum_{a+b=n} e_a[Z_1] e_b[Z_2].
  \]
\end{prop}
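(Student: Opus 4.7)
The plan is to reduce all four identities to the two standard generating-function expressions
\[
\sum_{n\ge 0} h_n(\vx)\, t^n = \exp\!\left(\sum_{k\ge 1} \frac{p_k(\vx)}{k}\, t^k\right), \qquad
\sum_{n\ge 0} e_n(\vx)\, t^n = \exp\!\left(\sum_{k\ge 1} \frac{(-1)^{k-1}\, p_k(\vx)}{k}\, t^k\right),
\]
together with two elementary properties of plethystic substitution on power sums. From the definition in the paper (replace each $z_i$ by $z_i^n$) one reads off directly both additivity $p_k[Z_1+Z_2]=p_k[Z_1]+p_k[Z_2]$ and the sign rule $p_k[-Z]=-p_k[Z]$. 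Since $p_1,p_2,\dots$ are algebraically independent generators of $\Lambda$, these properties extend uniquely to $f[Z]$ for every $f\in\Lambda$, and in particular the two generating functions above may be plethystically substituted coefficient by coefficient.

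For the two additivity formulas, the plan is to substitute $Z_1+Z_2$ into the first generating function, use additivity of $p_k[\cdot]$ to split the exponent into two pieces, and then use that $\exp$ turns sums into products to factor the right-hand side as
\[
\sum_{n\ge 0} h_n[Z_1+Z_2]\, t^n = \left(\sum_{a\ge 0} h_a[Z_1]\, t^a\right)\!\left(\sum_{b\ge 0} h_b[Z_2]\, t^b\right).
\]
Extracting the coefficient of $t^n$ gives the first identity, and the companion identity for $e_n$ follows by an identical argument from the second generating function.

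For the sign rules, substitute $-Z$ into the first generating function. The sign rule turns the exponent into $\sum_{k\ge 1}(-p_k[Z]/k)\,t^k$, which coincides with the exponent of the $e$-series for $Z$ evaluated at $-t$. This yields
\[
\sum_{n\ge 0} h_n[-Z]\, t^n = \sum_{n\ge 0} e_n[Z]\,(-t)^n,
\]
and comparing coefficients gives $h_n[-Z]=(-1)^n e_n[Z]$; the companion identity $e_n[-Z]=(-1)^n h_n[Z]$ is established symmetrically from the $e$-generating function.

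There is no real obstacle in this proof. The only point requiring a line of justification is that plethysm may be applied termwise to the exponential generating functions; this is routine since each coefficient $h_n$ or $e_n$ is a polynomial in the finitely many power sums $p_1,\dots,p_n$, so plethysm, being a $\QQ$-algebra map on $\Lambda$ once extended from the power sums, commutes with the formal operations involved.
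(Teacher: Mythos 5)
Your proof is correct and is the standard argument: the paper states this proposition without proof, simply labelling these as ``well-known properties,'' so there is no in-paper argument to compare against. Your route through the exponential generating functions \( \sum_n h_n t^n=\exp(\sum_k p_k t^k/k) \) and \( \sum_n e_n t^n=\exp(\sum_k(-1)^{k-1}p_k t^k/k) \), combined with the additivity and sign rule for \( p_k[\cdot] \) read off from the paper's definition of plethysm on power sums, is exactly the canonical derivation, and the one justification you flag (that plethysm extends to a \( \QQ \)-algebra map and hence can be applied coefficientwise to these series) is handled adequately.
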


The following lemma will be used in Section~\ref{sec:lattice}.

\begin{lem}[{\cite[Lemma~5.2]{Kim_JT22}}] \label{lem:h_m[Z]}
  Suppose that \( Z \) is any formal power series, \( z \) is a single
  variable, and \( c\in\{0,1\} \). Then we have
  \[
    h_m[Z] = h_m[Z-cz] + czh_{m-1}[Z].
  \]
\end{lem}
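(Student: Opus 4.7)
The plan is to reduce the identity to a direct application of Proposition~\ref{prop:h[-Z]}. When \( c=0 \), the claimed equation becomes \( h_m[Z]=h_m[Z] \) and there is nothing to prove, so I would focus on the case \( c=1 \), where the identity reads \( h_m[Z] = h_m[Z-z] + z\,h_{m-1}[Z] \).

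To handle this case, I would expand \( h_m[Z-z] \) using the addition rule \( h_m[Y_1+Y_2] = \sum_{a+b=m} h_a[Y_1]\,h_b[Y_2] \) from Proposition~\ref{prop:h[-Z]}, taking \( Y_1 = Z \) and \( Y_2 = -z \). This gives \( h_m[Z-z] = \sum_{b=0}^{m} h_{m-b}[Z]\,h_b[-z] \). Next I would apply the sign rule \( h_b[-z] = (-1)^b e_b[z] \), also from the same proposition. Since \( z \) is a single variable, the plethystic substitution \( e_b[z] \) evaluates, straight from the definition of \( e_b \) and the plethystic conventions of Section~\ref{sec:plethystic}, to \( 1 \) when \( b=0 \), to \( z \) when \( b=1 \), and to \( 0 \) for \( b\ge 2 \). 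Only the \( b=0 \) and \( b=1 \) terms therefore survive, and the sum collapses to \( h_m[Z] - z\,h_{m-1}[Z] \).

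Rearranging yields \( h_m[Z] = h_m[Z-z] + z\,h_{m-1}[Z] \), which together with the trivial \( c=0 \) case (formally obtained by multiplying the correction term by \( c \)) gives the uniform statement of the lemma. Because this is a two-step unpacking of Proposition~\ref{prop:h[-Z]}, I do not expect any genuine obstacle; the only point that deserves a moment of care is the plethystic evaluation of \( e_b[z] \) for a single variable, which is what makes the sum telescope down to exactly two surviving terms.
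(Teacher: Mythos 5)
Your proof is correct: the paper omits the proof of this lemma as ``straightforward,'' and your two-step argument (expanding \( h_m[Z-cz] \) via the addition rule of Proposition~\ref{prop:h[-Z]}, then using \( h_b[-z]=(-1)^b e_b[z] \) with \( e_b[z]=0 \) for \( b\ge 2 \) in a single variable) is exactly the routine verification the authors had in mind. No issues.
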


We introduce a novel notation \( \ominus \) as follows.

\begin{defn}
  For \( n\in\ZZ \) and
  \( Z_1,Z_2\in \QQ[[z_1,z_2,\dots]] \) with no constant terms,
  \[
    h_n[Z_1\ominus Z_2] = \sum_{a-b=n} h_a[Z_1] h_b[Z_2] \qand
    e_n[Z_1\ominus Z_2] = \sum_{a-b=n} e_a[Z_1] e_b[Z_2].
  \]
\end{defn}

Informally, the notation \( Z_1 \ominus Z_2 \) can be considered as \( Z_1 + Z_2
\) regarding each variable \( z_i \) used in \( Z_2 \) as a variable of degree
\( -1 \). The special case \( h_n[X\ominus X] = \sum_{k\ge0} h_{n+k}[X]h_k[X] \)
of this notation is considered in \cite{BK72} in order to express \(
\sum_{\lambda\in\Par_n}s_\lambda(\vx) \); see also \cite[Exercise 7.16]{EC2}.

We remark some properties of this notation \( Z_1\ominus Z_2 \). First, in the
sum of the above definition of \( h_n[Z_1\ominus Z_2] \)
(resp.~\( e_n[Z_1\ominus Z_2] \)) there can be infinitely many nonzero terms \(
h_a[Z_1] h_b[Z_2] \) (resp.~\(e_a[Z_1] e_b[Z_2] \)).
Second, even if \( n < 0 \), \( h_n[Z_1\ominus Z_2] \) and \( e_n[Z_1\ominus
Z_2] \) can be nonzero. For example,
\begin{equation}\label{eq:h-2}
  h_{-2}[z_1\ominus z_2] = \sum_{k\ge 0} h_{k}[z_1] h_{k+2}[z_2]
  = \sum_{k\ge 0} z_1^{k} z_2^{k+2}.
\end{equation}
We also note that, contrary to the usual plethystic substitution, \(
f[Z_1\ominus Z_2] \) cannot be defined for all symmetric functions \( f(\vx) \)
uniformly. For instance, we have \( h_1(\vx)=e_1(\vx) \) but \( h_1[z_1\ominus z_2] \ne
e_1[z_1\ominus z_2] \).

Now we show the equivalence of the two bialternant formulas
\eqref{eq:defG} and \eqref{eq:defG'} for \( G_\lambda(\vx_n;\va,\vb) \).
We first need the following lemma.

\begin{lem}\label{lem:hAB}
  For \( r,s\in\ZZ \), we have
  \[
    \sum_{k\ge0} h_k[A_r-B_s] x^k 
    =\frac{(1-\beta_1 x)\cdots(1-\beta_{s}x)}{(1-\alpha_1 x)\cdots(1-\alpha_r x)}.
  \]  
\end{lem}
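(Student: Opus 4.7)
The plan is to recognize both sides as standard generating functions and assemble the identity from three ingredients already stated in the excerpt: the closed form for the generating series of $h_k$ on a finite alphabet, the sign-swap identity $h_n[-Z]=(-1)^n e_n[Z]$ from Proposition~\ref{prop:h[-Z]}, and the additivity $h_n[Y+Z]=\sum_{a+b=n}h_a[Y]h_b[Z]$ from the same proposition.

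First I would handle the $A_r$ piece. Since $A_r=\alpha_1+\cdots+\alpha_r$ is a finite sum of variables, the plethystic evaluation $h_k[A_r]$ is the ordinary complete homogeneous symmetric function $h_k(\alpha_1,\dots,\alpha_r)$, whose generating function is the classical product
\[
\sum_{k\ge 0} h_k[A_r]\, x^k \;=\; \prod_{i=1}^{r}\frac{1}{1-\alpha_i x}.
\]

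Next I would compute the $-B_s$ piece. From $h_k[-B_s]=(-1)^k e_k[B_s]$ and the standard generating function $\sum_{k\ge 0} e_k[B_s]\, t^k=\prod_{i=1}^{s}(1+\beta_i t)$, substituting $t=-x$ yields
\[
\sum_{k\ge 0} h_k[-B_s]\, x^k \;=\; \prod_{i=1}^{s}(1-\beta_i x).
\]

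Finally I would combine the two by splitting the plethystic argument as $A_r-B_s=A_r+(-B_s)$ and applying additivity, which gives $h_k[A_r-B_s]=\sum_{a+b=k}h_a[A_r]h_b[-B_s]$. Therefore the left-hand side factors as the Cauchy product of the two generating functions above, reproducing the right-hand side. I do not expect any real obstacle here: every step invokes a formula already recorded in the excerpt, and convergence as a formal power series in $x$ is automatic since $r$ and $s$ are finite. The only mild care needed is to read the notation $\ZZ$ in the hypothesis as requiring $r,s\ge 0$ so that $A_r$ and $B_s$ are defined as finite sums (otherwise one interprets empty products as $1$ and the identity still holds trivially).
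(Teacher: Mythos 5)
Your proposal is correct and follows essentially the same route as the paper: split $A_r-B_s$ via the additivity in Proposition~\ref{prop:h[-Z]}, convert $h_b[-B_s]$ to $(-1)^b e_b[B_s]$, and multiply the two classical generating functions for $h_k$ and $e_k$ on finite alphabets. The paper merely packages the sign-swap and the additivity into the single expansion $h_k[A_r-B_s]=\sum_{i=0}^k h_i[A_r](-1)^{k-i}e_{k-i}[B_s]$ before factoring the sum, so there is no substantive difference.
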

\begin{proof}
  By Proposition~\ref{prop:h[-Z]}, \( h_k[A_r-B_s]=\sum_{i=0}^k h_i[A_r](-1)^{k-i} e_{k-i}[B_s] \).
Thus the left hand side of the equation is equal to 
\[
  \sum_{k\ge0} \sum_{i=0}^k h_i[A_r](-1)^{k-i} e_{k-i}[B_s] x^k
  = \sum_{i\ge0} h_i[A_r] x^{i} \sum_{j\ge0} (-1)^j e_{j}[B_s] x^{j}.
\]
 Then, by \eqref{eq:f[z]=f(z)}, the well-known identities
 \[
   \sum_{i\ge0} h_i(\alpha_1,\dots,\alpha_r)x^i = \frac{1}{(1-\alpha_1 x)\cdots(1-\alpha_r x)},
   \quad \sum_{i\ge0} e_i(\beta_1,\dots,\beta_s)x^i = (1+\beta_1
   x)\cdots(1+\beta_s x)
 \]
 complete the proof.
\end{proof}

By Lemma~\ref{lem:hAB},
we obtain that \eqref{eq:defG} and \eqref{eq:defG'} are equivalent as follows:
\begin{multline*}
  h_{\lambda_i+n-i}[x_j \ominus(A_{\lambda_i}-B_{i-1})] 
  = \sum_{k\ge0} h_{\lambda_i+n-i+k}(x_j) h_k[A_{\lambda_i}-B_{i-1}]\\
  = x_j^{\lambda_i+n-i} \sum_{k\ge0} x_j^{k} h_k[A_{\lambda_i}-B_{i-1}]
  =x_j^{\lambda_i+n-i}
  \dfrac{(1-\beta_1 x_j)\cdots (1-\beta_{i-1} x_j)}
  {(1-\alpha_1 x_j)\cdots (1-\alpha_{\lambda_i} x_j)}.
\end{multline*}

Finally, we give simple lemmas which will be used in later sections.

\begin{lem}\label{lem:det(h)=0}
  Let \( \lambda,\mu\in\Par_n \) with \( \mu\not\subseteq\lambda \)
  and let \( Z_{i,j} \) be any formal power series for \( 1\le i,j\le n \).
  Then
  \[
    \det \left( h_{\lambda_i-\mu_j-i+j}[Z_{i,j}] \right)_{i,j=1}^{n}
    =\det \left( e_{\lambda_i-\mu_j-i+j}[Z_{i,j}] \right)_{i,j=1}^{n} = 0. 
  \]
\end{lem}
\begin{proof}
  We use the same argument in \cite[Ch I, (5.7)]{Macdonald}. By assumption, we
  have \( \mu_k > \lambda_k \) for some \( 1\le k\le n \). For \( 1\le j\le k\le
  i\le n \), we have \( \mu_j\ge\mu_k>\lambda_k\ge \lambda_i \) so that \(
  \lambda_i-\mu_j-i+j < 0 \). This implies that the \( (i,j) \)-entry of the
  matrix for each determinant equals \( 0 \). Therefore, using elementary linear
  algebra, we conclude that each determinant equals 0.
\end{proof}

Note that in Lemma~\ref{lem:det(h)=0}, each \( Z_{i,j} \) must be a formal power
series and not of the form \( Z_{i,j}=A_{i,j}\ominus B_{i,j} \). Indeed, if \(
\lambda=\emptyset \), \( \mu=(2) \), \( n=1 \), and \( Z_{1,1}=z_1\ominus z_2
\), then the first determinant in this lemma is \( h_{-2}[z_1\ominus z_2]\ne0 \)
as shown in \eqref{eq:h-2}.

\begin{lem}\label{lem:det(h)=det*det}
  Let \( \lambda,\mu\in\Par_n \) with \( \mu\subseteq\lambda \)
  and let \( Z_{i,j} \) be any formal power series for \( 1\le i,j\le n \).
  If \( 1\le k\le n \) is an integer such that \( \lambda_k=\mu_k \), then
  \[
    \det \left( h_{\lambda_i-\mu_j-i+j}[Z_{i,j}] \right)_{i,j=1}^{n}
    =\det \left( h_{\lambda_i-\mu_j-i+j}[Z_{i,j}] \right)_{i,j=1}^{k-1}
    \det \left( h_{\lambda_i-\mu_j-i+j}[Z_{i,j}] \right)_{i,j=k+1}^{n}. 
  \]
\end{lem}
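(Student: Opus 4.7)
The plan is to show that the matrix in question has a block upper-triangular structure once the rows and columns are arranged as they are, with the bottom-left block being zero. Specifically, I would verify that for indices $i > k$ and $j \le k$, the entry $h_{\lambda_i - \mu_j - i + j}[Z_{i,j}]$ vanishes because its subscript is strictly negative.

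To see this, first fix such $i$ and $j$. Since $\lambda$ is weakly decreasing and $i > k$, we have $\lambda_i \le \lambda_k$. By hypothesis $\lambda_k = \mu_k$, and since $\mu$ is weakly decreasing and $j \le k$, we have $\mu_k \le \mu_j$. Combining these gives $\lambda_i - \mu_j \le 0$. In addition, $i > k \ge j$ yields $-i + j \le -1$. Therefore
\[
  \lambda_i - \mu_j - i + j \le -1 < 0,
\]
and by the convention $h_m(\vx) = 0$ for $m < 0$, which extends to plethystic substitutions, the entry $h_{\lambda_i - \mu_j - i + j}[Z_{i,j}]$ is zero.

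Having established that the bottom-left $(n-k)\times k$ block vanishes, the $n \times n$ matrix has the block form
\[
  \begin{pmatrix} M_1 & * \\ 0 & M_2 \end{pmatrix},
\]
where $M_1 = (h_{\lambda_i - \mu_j - i + j}[Z_{i,j}])_{i,j=1}^{k}$ and $M_2 = (h_{\lambda_i - \mu_j - i + j}[Z_{i,j}])_{i,j=k+1}^{n}$. The determinant of such a block triangular matrix factors as $\det(M_1) \det(M_2)$, which gives the desired identity. The only genuine content of the proof is the inequality above; everything else is routine linear algebra, so I do not anticipate any serious obstacle.
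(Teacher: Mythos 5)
Your proof is correct and follows essentially the same route as the paper: both arguments show the entry for $i>k$, $j\le k$ vanishes because $\lambda_i\le\lambda_k=\mu_k\le\mu_j$ forces the subscript $\lambda_i-\mu_j-i+j$ to be negative, and then conclude by block-triangularity of the determinant.
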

\begin{proof}
  If \( 1\le j \le k\le i\le n \) and \( j < i \), then 
  \( h_{\lambda_i-\mu_j-i+j}[Z_{i,j}]=0 \) because
  \[
    \lambda_i-\mu_j-i+j \le \lambda_k-\mu_k-1 =-1<0.
  \]
  Thus the matrix on the left hand side is a block triangular matrix and since
  \( h_{\lambda_i-\mu_j-i+j}[Z_{i,j}]=1 \) for \( i=j=k \), we obtain the
  identity.
\end{proof}

\begin{lem}\label{lem:det(h)=det(h)}
  Let \( \lambda,\mu\in\Par_n \) with \( \mu\subseteq\lambda \)
  and let \( Z_{i,j} \) be any formal power series for \( 1\le i,j\le n \).
  Then for any \( m\ge n \), we have
\[
  \det \left( h_{\lambda_i-\mu_j-i+j}[Z_{i,j}] \right)_{i,j=1}^{m}
  =\det \left( h_{\lambda_i-\mu_j-i+j}[Z_{i,j}] \right)_{i,j=1}^{n}. 
\]
\end{lem}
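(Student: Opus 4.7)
The plan is to exploit the fact that $\lambda,\mu\in\Par_n$ forces $\lambda_i=0$ for $i>n$ and $\mu_j=0$ for $j>n$, so that the $m\times m$ matrix acquires a block-upper-triangular structure with an identity-like block in the lower right. The extra rows and columns should contribute nothing to the determinant.

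First I would fix $m\ge n$ and examine the entries $M_{i,j}=h_{\lambda_i-\mu_j-i+j}[Z_{i,j}]$ of the $m\times m$ matrix in three regimes. For $i,j\le n$, we recover the right-hand side determinant. For $i>n$ and $j\le n$, we have $\lambda_i=0$, $\mu_j\ge 0$, and $j-i\le n-i\le -1$, so the subscript $\lambda_i-\mu_j-i+j=-\mu_j-(i-j)$ is strictly negative; hence $M_{i,j}=0$ because $h_k[Z]=0$ for $k<0$ in ordinary plethystic substitution (note this step relies on using $h_k$, not the $\ominus$-version). For $i,j>n$, both $\lambda_i$ and $\mu_j$ vanish, so $M_{i,j}=h_{j-i}[Z_{i,j}]$.

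Next I would assemble these observations into a block decomposition
\[
  \bigl(M_{i,j}\bigr)_{i,j=1}^{m}=\begin{pmatrix} A & B \\ 0 & C \end{pmatrix},
\]
where $A=(M_{i,j})_{i,j=1}^{n}$ and $C=(h_{j-i}[Z_{i,j}])_{i,j=n+1}^{m}$. The matrix $C$ is upper triangular, since for $i>j$ in this range we have $j-i<0$, so $h_{j-i}[Z_{i,j}]=0$; its diagonal entries are $h_{0}[Z_{i,i}]=1$, so $\det(C)=1$. The standard determinant formula for block triangular matrices then gives $\det(M)=\det(A)\det(C)=\det(A)$, which is precisely the claimed identity.

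The only subtlety is the appeal to $h_k[Z]=0$ for $k<0$, which must be distinguished from the $\ominus$ notation introduced earlier where negative-index quantities can be nonzero; once that point is made explicit, every step is routine. I do not anticipate a genuine obstacle here — the lemma is essentially a bookkeeping statement that padding a Jacobi--Trudi-type matrix with trivial rows and columns leaves the determinant unchanged.
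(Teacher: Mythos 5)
Your proof is correct and rests on the same key observation as the paper's: for $i>n$ and $j\le n$ the subscript $\lambda_i-\mu_j-i+j$ is negative, so those entries vanish and the padding rows/columns form a unitriangular block. The paper packages this as repeated application of its factorization lemma (Lemma~\ref{lem:det(h)=det*det}) rather than a single block decomposition, but that is only a difference in presentation, and your explicit remark that one must use $h_k[Z]=0$ for $k<0$ (as opposed to the $\ominus$ situation) is a welcome clarification.
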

\begin{proof}
  Since \( m\ge n \), we can consider \( \lambda \) and \( \mu \) as partitions
  in \( \Par_m \) with \( \lambda_i=\mu_i=0 \) for \( n+1\le i\le m \).
  Applying Lemma~\ref{lem:det(h)=det*det} repeatedly for \( k=m,m-1,\dots,n+1 \)
  gives this lemma.
\end{proof}

\section{Schur expansions, Jacobi--Trudi-like formulas, and duality}
\label{sec:schur_expansion}

In this section, we give Schur expansions and Jacobi--Trudi-like formulas for
\(G_{\lambda}(\vx;\va,\vb)\) and \(g_{\lambda}(\vx;\va,\vb)\) using the
Cauchy--Binet theorem. We also show the duality between 
\(G_{\lambda}(\vx;\va,\vb)\) and \(g_{\lambda}(\vx;\va,\vb)\) with respect to
the Hall inner product.

We need some notation for matrices.
We write \(P=(p_{i,j})_{i\in I,j\in J}\) to mean that \(P\) is a matrix with
row index set \(I\) and column index set \(J\). 
For \(P=(p_{i,j})_{i\in I,j\in J}\), \( K\subseteq I \), and \( L\subseteq J \),
we define
\[
  P_K = (p_{i,j})_{i\in K, j\in J}, \qquad
  P^L = (p_{i,j})_{i\in I, j\in L}, \qquad
  P_K^L = (p_{i,j})_{i\in K, j\in L}.
\]
For \(\lambda,\mu\in \Par_n\), \(P=(p_{i,j})_{i\in [n],j \in \NN}\),
\(Q=(q_{i,j})_{i\in\NN,j\in [n]}\), and \(R=(r_{i,j})_{i,j\in\NN}\), we also define
\begin{align*}
  P^{\lambda}&:=(p_{i,\lambda_j+n-j})_{1\le i,j \le n}, \\
  Q_{\lambda}&:=(q_{\lambda_i+n-i,j})_{1\le i,j \le n}, \\
  R_\lambda^\mu&:=(r_{\lambda_i+n-i,\mu_j+n-j})_{1\le i,j \le n}.
\end{align*}

The following lemma is a useful tool in this paper.

\begin{lem}[Cauchy--Binet theorem for partitions]  \label{lem:C-B}
  Suppose that
  \(P=(p_{i,j})_{i\in [n],j\in \NN}\) and \(Q=(q_{i,j})_{i\in \NN,j\in [n]}\)
  are matrices such that each entry in \( PQ \) is well defined as a series.
  Then we have
  \begin{equation*}
    \det (P Q) = \sum_{\mu \in \Par_n} (\det P^{\mu}) (\det Q_{\mu}).
  \end{equation*}  
\end{lem}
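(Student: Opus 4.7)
The plan is to derive this from the classical Cauchy--Binet identity (in its infinite-matrix form) via a re-indexing of the sum. The standard version states that for an $n \times \infty$ matrix $P$ and an $\infty \times n$ matrix $Q$ whose product is well defined,
\[
\det(PQ) = \sum_{S} \det(P^{S}) \det(Q_{S}),
\]
where $S$ ranges over all $n$-element subsets of $\mathbb{N}$, and $P^{S}$, $Q_{S}$ denote the $n \times n$ submatrices obtained by picking the columns of $P$ (respectively the rows of $Q$) indexed by the elements of $S$ listed in decreasing order. So my first step would be to simply invoke this identity; the hypothesis that every entry of $PQ$ is well defined as a series guarantees both sides make sense and that the usual expansion-of-the-determinant argument goes through term by term.

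The second step is the reindexing. I would set up the standard bijection between $n$-element subsets $S = \{s_1 > s_2 > \cdots > s_n\} \subseteq \mathbb{N}$ and partitions $\mu \in \Par_n$ given by
\[
s_i = \mu_i + n - i, \qquad 1 \le i \le n.
\]
The strict inequalities $s_1 > s_2 > \cdots > s_n \ge 0$ are equivalent to the weak inequalities $\mu_1 \ge \mu_2 \ge \cdots \ge \mu_n \ge 0$, so this is a bijection between $n$-subsets of $\mathbb{N}$ and $\Par_n$. Under this bijection, the submatrix $P^{S}$ (columns in decreasing order of $S$) is exactly $P^{\mu} = (p_{i,\mu_j+n-j})_{1 \le i,j \le n}$, and similarly $Q_{S} = Q_{\mu}$. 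Substituting this reindexing into the classical formula gives the claimed identity.

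The only genuine issue to watch is the convergence and the validity of the infinite Cauchy--Binet formula itself. I would therefore justify the classical identity by expanding $\det(PQ)$ using the Leibniz formula together with the fact that $(PQ)_{i,j} = \sum_{k \in \mathbb{N}} p_{i,k} q_{k,j}$ converges as a series by hypothesis, then grouping the resulting sum over tuples $(k_1,\dots,k_n) \in \mathbb{N}^n$ according to the underlying set. Tuples with repeated entries vanish after antisymmetrization, and tuples whose underlying set is $S$ contribute exactly $\det(P^{S}) \det(Q_{S})$, as usual. Since this bookkeeping is routine, the main (and essentially only) substantive content of the lemma is the bijection $S \leftrightarrow \mu$ above, which converts the Cauchy--Binet sum over subsets into a sum over partitions in the form needed throughout the paper.
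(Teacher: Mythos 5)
Your proposal is correct and follows essentially the same route as the paper: invoke the classical (infinite) Cauchy--Binet identity and reindex the $n$-element subsets of $\NN$ by partitions via $s_i = \mu_i + n - i$. The only cosmetic difference is that you absorb the reordering sign by listing the indices in decreasing order for both $P^S$ and $Q_S$, whereas the paper uses increasing order and cancels the two explicit factors of $(-1)^{n(n-1)/2}$.
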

\begin{proof}
  By the Cauchy--Binet theorem, we have
  \begin{equation}\label{eq:C-B}
    \det (PQ)= \sum_{I\subseteq \NN , |I|=n} (\det P^I) (\det Q_I).
  \end{equation}

For a partition $\mu\in\Par_n$, we write $[\mu]:=\{\mu_n< \mu_{n-1}+1< \dots< \mu_1+n-1\}\subseteq \NN$. One can easily check that the map
\( \mu \mapsto [\mu] \) is a bijection from \( \Par_n\) to \( \{I\subseteq \NN: |I|=n\} \).
 Then we can rewrite the right hand side of \eqref{eq:C-B} as \(\sum_{\mu\in\Par_n} (\det P^{[\mu]})(\det Q_{[\mu]})\), where
\( P^{[\mu]}=(p_{i,\mu_{n+1-j}+j-1})_{i,j \in [n]}\) and \(Q_{[\mu]}=(q_{\mu_{n+1-i}+i-1,j})_{i,j\in [n]}\).
Since \(\det P^{[\mu]} = (-1)^{n(n-1)/2} \det P^\mu\) and \(\det Q_{[\mu]} = (-1)^{n(n-1)/2} \det Q_\mu\),
   we obtain the result.
\end{proof}

For \( \lambda,\mu\in\Par_n \), define
\begin{align}
  \label{eq:C_lm} C_{\lambda,\mu}(\va,\vb) &= \det \left( h_{\mu_i-\lambda_j-i+j}[A_{\lambda_j}-B_{j-1}] \right)_{i,j=1}^{n},  \\
  \label{eq:c_lm} c_{\lambda,\mu}(\va,\vb) &= \det \left( h_{\lambda_i-\mu_j-i+j}[-A_{\lambda_i-1}+B_{i-1}] \right)_{i,j=1}^{n}.
\end{align}
Note that by Lemmas~\ref{lem:det(h)=0} and \ref{lem:det(h)=det(h)}, 
\( C_{\lambda,\mu}(\va,\vb) \) and 
\( c_{\lambda,\mu}(\va,\vb) \) are independent of the choice of \( n \)
provided \( \lambda,\mu\in\Par_n \). In particular, we can write
\begin{align*}
  C_{\lambda,\mu}(\va,\vb)
  &= \det \left( h_{\mu_i-\lambda_j-i+j}[A_{\lambda_j}-B_{j-1}] \right)_{i,j=1}^{\max(\ell(\lambda),\ell(\mu))},  \\
  c_{\lambda,\mu}(\va,\vb)
  &= \det \left( h_{\lambda_i-\mu_j-i+j}[-A_{\lambda_i-1}+B_{i-1}] \right)_{i,j=1}^{\max(\ell(\lambda),\ell(\mu))}.
\end{align*}
Note also that, by Lemma~\ref{lem:det(h)=0}, we have
\begin{subequations}
\begin{align}
  \label{eq:C_lm=0}
  C_{\lambda,\mu}(\va,\vb) &= 0 \quad
    \mbox{unless \( \mu\supseteq\lambda \), and,}  \\
  \label{eq:c_lm=0}
  c_{\lambda,\mu}(\va,\vb) &= 0 \quad
    \mbox{unless \( \mu\subseteq\lambda \).}
\end{align}
\end{subequations}
Thus, for a fixed \( \lambda\in\Par \), and any family
\( \{f_\mu\}_{\mu\in \Par} \) of elements in \( \hat{\Lambda} \), we have
\begin{align}
  \label{eq:sum_C}
  \sum_{\mu\in \Par}C_{\lambda,\mu}(\va,\vb) f_\mu
  &=\sum_{\mu \supseteq \lambda}C_{\lambda,\mu}(\va,\vb) f_\mu,\\
  \label{eq:sum_c}
  \sum_{\mu\in \Par}c_{\lambda,\mu}(\va,\vb) f_\mu
  &=\sum_{\mu \subseteq \lambda}c_{\lambda,\mu}(\va,\vb) f_\mu.
\end{align}

Now we expand \( G_{\lambda}(\vx_n;\va,\vb) \) and \(
g_{\lambda}(\vx_n;\va,\vb) \) into Schur functions.

\begin{thm}
  \label{thm:Schur_exp G,g for Par_n}
  Let \(\lambda \in \Par_n\). We have
  \begin{align}
    \label{eq:G_n=s*det} G_{\lambda}(\vx_n;\va,\vb) &= \sum_{\mu \supseteq \lambda}C_{\lambda,\mu}(\va,\vb) s_{\mu}(\vx_n),  \\
    \label{eq:g_n=s*det} g_{\lambda}(\vx_n;\va,\vb) &= \sum_{\mu \subseteq \lambda}c_{\lambda,\mu}(\va,\vb) s_{\mu}(\vx_n).
  \end{align}
\end{thm}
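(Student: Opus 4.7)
The plan is to derive both Schur expansions by factoring the determinant in each bialternant definition as a product of two matrices and then invoking the Cauchy--Binet theorem for partitions (the lemma just stated, which I will call Lemma~C--B).

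\textbf{Proof of \eqref{eq:G_n=s*det}.} Starting from the reformulation~\eqref{eq:defG'} of the definition of $G_\lambda(\vx_n;\va,\vb)$, I would expand each entry of the numerator determinant using the sum in the definition of $\ominus$:
\begin{equation*}
  h_{\lambda_i+n-i}[x_j\ominus(A_{\lambda_i}-B_{i-1})]
  =\sum_{k\ge 0} x_j^{\lambda_i+n-i+k}\,h_k[A_{\lambda_i}-B_{i-1}],
\end{equation*}
since $h_a[x_j]=x_j^a$. Reindexing $c=\lambda_i+n-i+k$, this expresses the matrix as a product $PQ$ with
\begin{equation*}
  P_{i,c}=h_{c-\lambda_i-n+i}[A_{\lambda_i}-B_{i-1}]\quad(i\in[n],\, c\in\NN),\qquad Q_{c,j}=x_j^{c}.
\end{equation*}
Applying Lemma~C--B yields $\det(PQ)=\sum_{\mu\in\Par_n}(\det P^\mu)(\det Q_\mu)$. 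The factor $\det Q_\mu=\det(x_j^{\mu_i+n-i})$, divided by the Vandermonde $\prod_{i<j}(x_i-x_j)$, is exactly $s_\mu(\vx_n)$. For the other factor, substituting the definition of $P^\mu$ gives $\det P^\mu=\det\bigl(h_{\mu_j-\lambda_i-j+i}[A_{\lambda_i}-B_{i-1}]\bigr)_{i,j=1}^n$, which coincides with $C_{\lambda,\mu}(\va,\vb)$ (equation~\eqref{eq:C_lm}) after a transpose. Finally, Lemma~\ref{lem:det(h)=0} applied to $C_{\lambda,\mu}(\va,\vb)$ (with the roles of $\lambda$ and $\mu$ swapped relative to that lemma) forces $C_{\lambda,\mu}(\va,\vb)=0$ whenever $\lambda\not\subseteq\mu$, so the sum can be restricted to $\mu\supseteq\lambda$, establishing \eqref{eq:G_n=s*det}.

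\textbf{Proof of \eqref{eq:g_n=s*det}.} The argument is entirely parallel. From Definition~\ref{def:bi_alt_g}, expand
\begin{equation*}
  h_{\lambda_i+n-i}[x_j-A_{\lambda_i-1}+B_{i-1}]
  =\sum_{a+b=\lambda_i+n-i} x_j^{a}\,h_b[-A_{\lambda_i-1}+B_{i-1}]
\end{equation*}
by Proposition~\ref{prop:h[-Z]}. This factors the numerator matrix as $P'Q$ with the same $Q_{a,j}=x_j^a$ as above and $P'_{i,a}=h_{\lambda_i+n-i-a}[-A_{\lambda_i-1}+B_{i-1}]$ (which vanishes for $a>\lambda_i+n-i$). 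Lemma~C--B gives $\det(P'Q)=\sum_{\mu\in\Par_n}(\det (P')^\mu)(\det Q_\mu)$, and dividing by the Vandermonde as before produces $s_\mu(\vx_n)$ and identifies $\det (P')^\mu$ with $c_{\lambda,\mu}(\va,\vb)$ of \eqref{eq:c_lm}. Another application of Lemma~\ref{lem:det(h)=0} kills all terms with $\mu\not\subseteq\lambda$, yielding \eqref{eq:g_n=s*det}.

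The only genuine subtlety is bookkeeping: matching the reindexing in the expansion of each matrix entry with the row/column conventions used for $P^\mu$, $Q_\mu$ in Lemma~C--B, and then recognizing the resulting determinant as $C_{\lambda,\mu}$ or $c_{\lambda,\mu}$ possibly up to a transpose. Once the indices are lined up, the Vandermonde cancellation and the vanishing lemma deliver the claimed expansions immediately, with no delicate estimates or convergence issues beyond what is built into the $\ominus$-notation.
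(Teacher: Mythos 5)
Your proof is correct and follows essentially the same route as the paper: factor the bialternant numerator as a product of an $h$-matrix and a Vandermonde-type matrix, apply the Cauchy--Binet lemma for partitions, recognize $s_\mu(\vx_n)$ and $C_{\lambda,\mu}$ (resp.\ $c_{\lambda,\mu}$), and restrict the sum via Lemma~\ref{lem:det(h)=0}. The only difference is that the paper transposes the determinant first so the power matrix sits on the left, which merely swaps the roles of $P$ and $Q$ and changes nothing of substance.
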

\begin{proof}
We first note that in \eqref{eq:G_n=s*det} the summation index \( \mu \) can be any
partition \( \mu\in \Par_m \), \( m\ge0 \), containing \( \lambda \). However,
since \( s_\mu(\vx_n)=0 \) if \( \ell(\mu)> n \), we will show
\eqref{eq:G_n=s*det} where the sum is over all \( \mu\in\Par_n \)
with \( \mu\supseteq\lambda \).

By transposing the matrix in \eqref{eq:defG'}, we have
\[
  G_\lambda(\vx_n;\va,\vb)
  = \frac{\det
    \left( h_{\lambda_j+n-j}[x_i \ominus(A_{\lambda_j}-B_{j-1})]
    \right)_{1\le i,j\le n}}
  {\prod_{1\le i<j\le n}(x_i-x_j)}.
\]
Note that we can rewrite the \( (i,j) \)-entry \( h_{\lambda_j+n-j}[x_i \ominus(A_{\lambda_j}-B_{j-1})] \) as
\[
  \sum_{k\ge0} h_k[x_i] h_{k-(\lambda_j+n-j)}[A_{\lambda_j}-B_{j-1}]
  = \sum_{k\ge0} x_i^k h_{k-(\lambda_j+n-j)}[A_{\lambda_j}-B_{j-1}],
\]
which is the \( (i,j) \)-entry of the matrix \( PQ \),
where 
\[
  P=(x_i^j)_{i\in [n],j\in\NN}, \qquad 
  Q=(h_{i-(\lambda_j+n-j)}[A_{\lambda_j}-B_{j-1}])_{i\in \NN, j\in[n]}.
\]
Thus
\[
  G_\lambda(\vx_n;\va,\vb)  = \frac{\det(PQ)}{\prod_{1\le i<j\le n}(x_i-x_j)}
  = \sum_{\mu \in \Par_n} \frac{\det P^\mu  \det Q_{\mu}}{\prod_{1\le i<j\le n}(x_i-x_j)},
\]
where Lemma~\ref{lem:C-B} is used for the second equality.
Since 
\[
  \frac{\det P^\mu}{\prod_{1\le i<j\le n}(x_i-x_j)}
  =\frac{\det (x_i^{\mu_j+n-j})_{i,j\in[n]}}{\prod_{1\le i<j\le n}(x_i-x_j)} = s_\mu(\vx_n)
\]
and
\( \det Q_{\mu} = C_{\lambda,\mu}(\va,\vb) \), we obtain
\[
  G_{\lambda}(\vx_n;\va,\vb) = \sum_{\mu\in\Par_n}C_{\lambda,\mu}(\va,\vb) s_{\mu}(\vx_n).
\]
Therefore we obtain the first identity \eqref{eq:G_n=s*det} from the above
identity and \eqref{eq:sum_C}.

The second identity \eqref{eq:g_n=s*det} can be proved similarly using
\[
  g_\lambda(\vx_n;\va,\vb)
  = \frac{\det(h_{\lambda_j+n-j}[x_i-A_{\lambda_j-1}+B_{j-1}])_{1\le i,j\le n}}
  {\prod_{1\le i<j\le n}(x_i-x_j)},
\]
\[
  h_{\lambda_j+n-j}[x_i-A_{\lambda_j-1}+B_{j-1}]
  = \sum_{k\ge0} x_i^k h_{\lambda_j+n-j-k}[-A_{\lambda_j-1}+B_{j-1}],
\]
and the matrices
\[
  P=(x_i^j)_{i\in [n],j\in\NN}, \qquad 
  R=(h_{\lambda_j+n-j-i}[-A_{\lambda_j-1}+B_{j-1}])_{i\in \NN, j\in[n]}. \qedhere
\]
\end{proof}

By taking the limit \( n\to\infty \) in Theorem~\ref{thm:Schur_exp G,g for Par_n} we obtain the following corollary.

 \begin{cor}
  \label{cor:Schur_exp G,g}
  For \(\lambda \in \Par\), we have
  \begin{align}
    \label{eq:Schur_exp G}G_{\lambda}(\vx;\va,\vb) &= \sum_{\mu \supseteq \lambda}C_{\lambda,\mu}(\va,\vb) s_{\mu}(\vx),  \\
    \label{eq:Schur_exp g}g_{\lambda}(\vx;\va,\vb) &= \sum_{\mu \subseteq \lambda}c_{\lambda,\mu}(\va,\vb) s_{\mu}(\vx).
  \end{align}
 \end{cor}

 We can apply the Cauchy--Binet theorem again to the Schur expansions in
 Theorem~\ref{thm:Schur_exp G,g for Par_n} to obtain Jacobi--Trudi-like formulas.

\begin{thm}\label{thm:JT_ab}
  For \( \lambda\in\Par_n \), we have
  \begin{align}
    \label{eq:JT_G}
    G_\lambda(\vx_n;\va,\vb)
    &= \det\left(  h_{\lambda_i-i+j}[X_n\ominus (A_{\lambda_i}-B_{i-1})]\right)_{i,j=1}^n,\\
    \label{eq:JT_g}
    g_\lambda(\vx_n;\va,\vb)
    &= \det\left(h_{\lambda_i-i+j}[X_n-A_{\lambda_i-1}+B_{i-1}]\right)_{i,j=1}^n.
  \end{align}
\end{thm}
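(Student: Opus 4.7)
The plan is to apply the Cauchy--Binet theorem (Lemma~\ref{lem:C-B}) \emph{in reverse}, starting from the Schur expansions in Theorem~\ref{thm:Schur_exp G,g for Par_n}. The key observation is that both the coefficient \(C_{\lambda,\mu}\) (respectively \(c_{\lambda,\mu}\)) and the Schur function \(s_\mu(\vx_n)\) are \(n\times n\) determinants naturally indexed by \(\mu\in\Par_n\), so their sum of products should collapse to the determinant of a single matrix product.

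For \eqref{eq:JT_G}, I would use the (transposed) Jacobi--Trudi formula
\[
  s_\mu(\vx_n) = \det\bigl(h_{\mu_j-j+i}[X_n]\bigr)_{i,j=1}^{n} = \det(P^\mu),
\]
where \(P=(p_{i,k})_{i\in[n],\,k\in\NN}\) with \(p_{i,k}=h_{k-n+i}[X_n]\), and write \(C_{\lambda,\mu}(\va,\vb)=\det(Q_\mu)\) where \(Q=(q_{k,j})_{k\in\NN,\,j\in[n]}\) with \(q_{k,j}=h_{k-\lambda_j-n+j}[A_{\lambda_j}-B_{j-1}]\). Since \(C_{\lambda,\mu}=0\) for \(\mu\not\supseteq\lambda\) by Lemma~\ref{lem:det(h)=0}, the expansion \eqref{eq:G_n=s*det} runs effectively over all of \(\Par_n\), so Lemma~\ref{lem:C-B} gives
\[
  G_\lambda(\vx_n;\va,\vb) = \sum_{\mu\in\Par_n}\det(P^\mu)\det(Q_\mu) = \det(PQ).
\]
A direct entry-wise computation (substituting \(a=k-n+i\) and \(b=k-\lambda_j-n+j\), so that \(a-b=\lambda_j-j+i\)) identifies
\[
  (PQ)_{i,j} = \sum_{k\ge0} h_{k-n+i}[X_n]\,h_{k-\lambda_j-n+j}[A_{\lambda_j}-B_{j-1}] = h_{\lambda_j-j+i}\bigl[X_n\ominus(A_{\lambda_j}-B_{j-1})\bigr]
\]
as precisely the defining sum of the \(\ominus\) operation, and transposing the matrix yields \eqref{eq:JT_G}.

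The argument for \eqref{eq:JT_g} is parallel but with one twist: the row index of the determinant defining \(c_{\lambda,\mu}\) depends on \(\lambda_i\) rather than on \(\mu_i\), so the natural packaging is \(c_{\lambda,\mu}=\det(R^\mu)\) with \(R=(r_{i,k})_{i\in[n],\,k\in\NN}\) where \(r_{i,k}=h_{\lambda_i-k-i+n}[-A_{\lambda_i-1}+B_{i-1}]\). Since \((R^\mu)^T=(R^T)_\mu\), we have \(\det(R^\mu)=\det((R^T)_\mu)\), so Cauchy--Binet applied to \(P\) and \(R^T\) (after observing that \(c_{\lambda,\mu}=0\) for \(\mu\not\subseteq\lambda\)) produces \(g_\lambda(\vx_n;\va,\vb)=\det(P R^T)\). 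Now the two subscripts \emph{add} rather than subtract, so by Proposition~\ref{prop:h[-Z]} the entry collapses to the ordinary plethystic sum \(h_{\lambda_j-j+i}[X_n-A_{\lambda_j-1}+B_{j-1}]\), and transposing delivers \eqref{eq:JT_g}.

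The main obstacle is purely index bookkeeping: one must arrange the \(\mu\)-dependence to sit in the ``row'' variable of one factor and the ``column'' variable of the other so that Lemma~\ref{lem:C-B} applies, and in the \(g\) case one must transpose \(R\) before invoking it. Once the shapes are matched, the resulting entry-wise sums reduce cleanly to either the defining sum of the novel \(\ominus\) operation or a standard plethystic sum.
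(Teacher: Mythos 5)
Your proposal is correct and follows essentially the same route as the paper: both start from the Schur expansions of Theorem~\ref{thm:Schur_exp G,g for Par_n}, package \(s_\mu\) and the coefficient \(C_{\lambda,\mu}\) (resp.\ \(c_{\lambda,\mu}\)) as \(\det P^\mu\) and \(\det Q_\mu\) for suitable \([n]\times\NN\) and \(\NN\times[n]\) matrices, invoke Lemma~\ref{lem:C-B} in reverse after extending the sum to all of \(\Par_n\) via Lemma~\ref{lem:det(h)=0}, and identify the entries of the product with \(h_{\lambda_i-i+j}[X_n\ominus(\cdots)]\) or \(h_{\lambda_i-i+j}[X_n-\cdots]\). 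The only cosmetic difference is which factor plays the role of \(P\) versus \(Q\) (the paper puts the coefficient matrix first, so its \(g\)-case needs no final transpose), and your index bookkeeping is consistent with the paper's.
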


\begin{proof}
  Let 
  \[
    P=(h_{j-(\lambda_i+n-i)}[A_{\lambda_i}-B_{i-1}])_{i\in[n],j\in \NN},
    \qquad Q=(h_{i-(n-j)}[X_n])_{i\in\NN,j\in [n]}.
  \]
  Then
  \begin{align*}
    \det P^\mu
    &= \det \left( h_{\mu_j+n-j-(\lambda_i+n-i)}[A_{\lambda_i}-B_{i-1}] \right)_{i,j\in[n]}=C_{\lambda,\mu}(\va,\vb),\\
    \det Q_\mu
    &= \det(h_{\mu_i+n-i-(n-j)}[X_n])_{i,j\in[n]}= \det(h_{\mu_i-i+j}[X_n])_{i,j\in[n]} =s_\mu(\vx_n),
  \end{align*}
  where the last equality is essentially the classical Jacobi--Trudi identity \eqref{eq:JT}.
  Thus, by \eqref{eq:G_n=s*det} and Lemmas~\ref{lem:det(h)=0} and \ref{lem:C-B}, we have
  \[
    G_\lambda(\vx_n;\va,\vb) = \sum_{\mu\supseteq\lambda} \det P^\mu \det Q_\mu 
    = \sum_{\mu\in\Par_n} \det P^\mu \det Q_\mu = \det(PQ).
  \]
  On the other hand,
  \begin{align*}
    \det(PQ)
    &= \det \left( \sum_{k \geq 0} h_{k-(\lambda_i+n-i)}[A_{\lambda_i} - B_{i-1}] h_{k-(n-j)}[X_n]  \right)_{i,j=1}^n\\
    &= \det \left( \sum_{a-b=\lambda_i-i+j} h_{a}[X_n] h_b[A_{\lambda_i} - B_{i-1}] \right)_{i,j=1}^n \\
    &=\det\left(  h_{\lambda_i-i+j}[X_n\ominus (A_{\lambda_i}-B_{i-1})]\right)_{i,j=1}^n,
  \end{align*}
  where the second equality is obtained by first considering the sum over all \(
  k\in\ZZ \) and then using the fact \( h_m(\vx)=0 \) for \( m<0 \).
  This shows the first identity \eqref{eq:JT_G}.

  The second identity \eqref{eq:JT_g} can be proved similarly by observing
  \( c_{\lambda,\mu}(\va,\vb) = \det R^\mu \) and \(s_\mu(\vx_n)=\det Q_\mu \), where
  \[
    R=(h_{\lambda_i +n -i -j }[- A_{\lambda_i -1} + B_{i-1}])_{i\in [n],j\in
      \NN},
    \qquad  Q=(h_{i-(n-j)}[X_n])_{i\in\NN,j\in [n]}. \qedhere
  \]
\end{proof}

Finally, we show the duality between
\( G_\lambda(\vx;\va,\vb) \) and \( g_\lambda(\vx;\va,\vb) \)
with respect to the Hall inner product
using their Schur expansions.

\begin{thm}\label{thm:vec dual}
  For \( \lambda,\mu\in\Par \), we have
  \[
    \langle G_\lambda(\vx;\va,\vb), g_\mu(\vx;\va,\vb)  \rangle
  = \delta_{\lambda,\mu}.
\]
\end{thm}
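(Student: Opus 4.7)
The approach is to combine the Schur expansions of $G_\lambda$ and $g_\mu$ from Corollary~\ref{cor:Schur_exp G,g} with the Cauchy--Binet theorem (Lemma~\ref{lem:C-B}) applied in reverse, paralleling the proofs of Theorems~\ref{thm:Schur_exp G,g for Par_n} and \ref{thm:JT_ab}. Bilinearity of the Hall pairing and $\langle s_\nu,s_\rho\rangle=\delta_{\nu,\rho}$ immediately yield
\[
\langle G_\lambda(\vx;\va,\vb),g_\mu(\vx;\va,\vb)\rangle = \sum_\nu C_{\lambda,\nu}(\va,\vb)\,c_{\mu,\nu}(\va,\vb).
\]
For $n\ge\max(\ell(\lambda),\ell(\mu))$, I would introduce the matrices
\[
P = (h_{j-(\lambda_i+n-i)}[A_{\lambda_i}-B_{i-1}])_{i\in [n],\,j\in\NN}, \quad Q = (h_{(\mu_j+n-j)-i}[-A_{\mu_j-1}+B_{j-1}])_{i\in\NN,\,j\in [n]},
\]
which, exactly as in the proofs cited, satisfy $\det P^\nu=C_{\lambda,\nu}(\va,\vb)$ and $\det Q_\nu=c_{\mu,\nu}(\va,\vb)$. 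Lemma~\ref{lem:C-B} then collapses the sum into the single determinant $\det(PQ)$, and a direct plethystic convolution via $h_n[Y+Z]=\sum_{a+b=n}h_a[Y]h_b[Z]$ gives
\[
(PQ)_{i,j} = h_{\mu_j-\lambda_i+i-j}[A_{\lambda_i}-A_{\mu_j-1}-B_{i-1}+B_{j-1}].
\]

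The crucial step is a vanishing criterion for these entries: by Lemma~\ref{lem:hAB} the generating function $\sum_m h_m[A_{\lambda_i}-A_{\mu_j-1}-B_{i-1}+B_{j-1}]x^m$ equals the rational function $\frac{\prod_{k=1}^{\mu_j-1}(1-\alpha_k x)\prod_{k=1}^{i-1}(1-\beta_k x)}{\prod_{k=1}^{\lambda_i}(1-\alpha_k x)\prod_{k=1}^{j-1}(1-\beta_k x)}$, and whenever $j\le i$ and $\lambda_i<\mu_j$ every denominator factor cancels with one in the numerator, leaving a polynomial of degree $\mu_j-\lambda_i+i-j-1$; hence the coefficient of $x^{\mu_j-\lambda_i+i-j}$ is zero, i.e.\ $(PQ)_{i,j}=0$. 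With this criterion the evaluation of $\det(PQ)$ splits into three cases. If $\lambda\not\subseteq\mu$, Lemma~\ref{lem:det(h)=0} applied to $(PQ)^T$ (whose entries have the form $h_{\mu_i-\lambda_j+j-i}[\cdots]$) yields $\det(PQ)=0$. If $\lambda=\mu$, the exponent $\mu_j-\mu_i+i-j$ is strictly negative for $i<j$, so $PQ$ is lower triangular with diagonal entries $h_0[\cdot]=1$, forcing $\det(PQ)=1$. If $\lambda\subsetneq\mu$, pick any $k$ with $\lambda_k<\mu_k$; then for each $(i,j)\in\{k,\ldots,n\}\times\{1,\ldots,k\}$ the monotonicity of partitions gives $\lambda_i\le\lambda_k<\mu_k\le\mu_j$ and $j\le k\le i$, so the vanishing criterion wipes out this entire $(n-k+1)\times k$ block. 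Since $(n-k+1)+k>n$, the standard pigeonhole argument on Leibniz terms (as in the proof of Lemma~\ref{lem:det(h)=0}) forces $\det(PQ)=0$.

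The main obstacle I anticipate is the case $\lambda\subsetneq\mu$, since $PQ$ is no longer triangular and no individual row or column is manifestly zero. The vanishing criterion for $(PQ)_{i,j}$ is the essential ingredient; it captures a delicate cancellation between the $\alpha$- and $\beta$-parts of the rational generating function, and the paired hypotheses $\lambda_i<\mu_j$ and $j\le i$ are precisely what drop the polynomial degree by one to annihilate the targeted coefficient.
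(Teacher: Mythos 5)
Your proposal is correct, and its skeleton coincides with the paper's proof: both pass from the Schur expansions to $\sum_\nu C_{\lambda,\nu}(\va,\vb)\,c_{\mu,\nu}(\va,\vb)$, collapse that sum by a second application of Lemma~\ref{lem:C-B} into a single determinant whose generic entry is $h_{\mu_i-\lambda_j-i+j}[A_{\lambda_j}-A_{\mu_i-1}-B_{j-1}+B_{i-1}]$ (your matrix is the transpose of the paper's, which is harmless), and then kill entries by noting that a complete homogeneous function of a difference of finite alphabets vanishes once the degree exceeds the number of surviving letters --- your generating-function cancellation via Lemma~\ref{lem:hAB} is the same computation the paper performs by rewriting the entry as $(-1)^{\mu_i-\lambda_j-i+j}e_{\mu_i-\lambda_j-i+j}(\alpha_{\lambda_j+1},\dots,\alpha_{\mu_i-1},\beta_i,\dots,\beta_{j-1})$. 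The genuine divergence is the endgame. The paper evaluates the determinant by induction on $n$: whenever some $\lambda_k=\mu_k$ it splits the matrix using Lemma~\ref{lem:det(h)=det*det} (with a shift of the $\beta$'s), and only in the remaining case $\mu_k\ge\lambda_k+1$ for all $k$ does it exhibit a strictly lower triangular matrix. You instead dispose of $\lambda=\mu$ directly (unitriangular, determinant $1$) and, for $\lambda\subsetneq\mu$, pick a single index $k$ with $\lambda_k<\mu_k$ and annihilate the entire $(n-k+1)\times k$ lower-left block via the paired hypotheses $j\le i$ and $\lambda_i<\mu_j$, concluding by pigeonhole on the Leibniz expansion. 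This removes both the induction and the appeal to Lemma~\ref{lem:det(h)=det*det}, at the cost of having to state the vanishing criterion for arbitrary $(i,j)$ rather than only above the diagonal; the resulting argument is, if anything, slightly cleaner than the one in the paper. Your handling of $\lambda\not\subseteq\mu$ at the determinant level via Lemma~\ref{lem:det(h)=0} is also fine, though the paper gets that case for free because the sum over $\lambda\subseteq\nu\subseteq\mu$ is then empty.
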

\begin{proof}
  By Corollary~\ref{cor:Schur_exp G,g}, we obtain
  \begin{align}
    \label{eq:G,g}
    \langle G_\lambda(\vx;\va,\vb), g_\mu(\vx;\va,\vb) \rangle
    &= \left\langle \sum_{\nu\supseteq\lambda} C_{\lambda,\nu}(\va,\vb) s_\nu(\vx),
      \sum_{\rho\subseteq\mu} c_{\mu,\rho}(\va,\vb) s_\rho(\vx) \right\rangle \\
    \notag
    &= \sum_{\lambda\subseteq\nu\subseteq\mu}
    C_{\lambda,\nu}(\va,\vb) c_{\mu,\nu}(\va,\vb).
  \end{align}
  Thus, if \( \lambda\not\subseteq\mu \), then \( \langle
  G_\lambda(\vx;\va,\vb), g_\mu(\vx;\va,\vb) \rangle=0 \)
  and the theorem holds. Therefore we may assume \( \lambda\subseteq\mu \).

  Fix an integer \( n \) such that \( \lambda,\mu\in\Par_n \) and let
  \[
    P=(h_{\mu_i+n-i-j}[-A_{\mu_i-1}+B_{i-1}])_{i\in [n], j\in\NN}, \quad
    Q=(h_{i-(\lambda_j+n-j)}[A_{\lambda_j}-B_{j-1}])_{i\in\NN,j\in[n]}.
  \]
  Then
  \begin{align*}
    c_{\mu,\nu}(\va,\vb)
    &=\det \left( h_{\mu_i-\nu_j-i+j}[-A_{\mu_i-1}+B_{i-1}] \right)_{i,j=1}^n = \det P^\nu,\\
    C_{\lambda,\nu}(\va,\vb)
    &=\det \left( h_{\nu_i-\lambda_j-i+j}[A_{\lambda_j}-B_{j-1}] \right)_{i,j=1}^n= \det Q_\nu.
  \end{align*}
  Thus, by \eqref{eq:G,g} and Lemmas~\ref{lem:det(h)=0} and \ref{lem:C-B}, we have 
  \[
    \langle G_\lambda(\vx;\va,\vb), g_\mu(\vx;\va,\vb) \rangle
    = \sum_{\lambda\subseteq\nu\subseteq\mu} \det P^\nu \det Q_\nu
    = \sum_{\nu\in \Par_n} \det P^\nu \det Q_\nu = \det(PQ).
  \]
  On the other hand, the \( (i,j) \)-entry of \( PQ \) is
  \[
    \sum_{k\ge0} h_{\mu_i+n-i-k}[-A_{\mu_i-1}+B_{i-1}]h_{k-(\lambda_j+n-j)}[A_{\lambda_j}-B_{j-1}]
    =h_{\mu_i-\lambda_j-i+j}[A_{\lambda_j}-B_{j-1}-A_{\mu_i-1}+B_{i-1}].
  \]
  Therefore we obtain
  \begin{equation}
    \label{eq:duality_proof}
    \langle G_\lambda(\vx;\va,\vb), g_\mu(\vx;\va,\vb) \rangle
    = \det \left( h_{\mu_i-\lambda_j-i+j}[A_{\lambda_j}-B_{j-1}-A_{\mu_i-1}+B_{i-1}] \right)_{i,j=1}^n .
  \end{equation}
It suffices to show that if \( \lambda\subseteq\mu \), then
\begin{equation}
  \label{eq:det=d}
   \det \left( h_{\mu_i-\lambda_j-i+j}[A_{\lambda_j}-B_{j-1}-A_{\mu_i-1}+B_{i-1}] \right)_{i,j=1}^n
  = \delta_{\lambda,\mu}.
\end{equation}

We proceed by induction on \( n \). It is clear when \( n=0 \). Let \( n\ge 1\)
and suppose \eqref{eq:det=d} holds for all integers less than \( n \). If there
is an integer \( 1\le k\le n \) with \( \mu_k=\lambda_k \), then by
Lemma~\ref{lem:det(h)=det*det}, we can reduce the size of \( n \) and use the
induction hypothesis (after shifting the indices of \( \beta_i \) in the second
matrix in the right hand side of the lemma by \( k \)). Therefore we may assume \( \mu_k\ge \lambda_k+1 \) for all
\( 1\le k\le n \).

In this case it suffices to prove this claim: the matrix in \eqref{eq:det=d} is a lower
triangular matrix whose diagonal entries are all zero. Let \( 1 \le i \le j \le
n \). Then \( \mu_i\ge \mu_j\ge \lambda_j+1 \) and therefore 
\( \mu_i-\lambda_j-i+j>0 \) and the \( (i,j) \)-entry in \eqref{eq:det=d} is equal to
  \begin{align*}
    &h_{\mu_i-\lambda_j-i+j}
      [-\alpha_{\lambda_j+1}-\dots-\alpha_{\mu_i-1}-\beta_i-\dots-\beta_{j-1}] \\
    &\qquad\quad = (-1)^{\mu_i-\lambda_j-i+j} e_{\mu_i-\lambda_j-i+j}
      (\alpha_{\lambda_j+1},\dots,\alpha_{\mu_i-1},\beta_i,\dots,\beta_{j-1}).
  \end{align*}
  Since \( \mu_i-\lambda_j-i+j \) is greater than the number of variables in the
  above elementary symmetric function, the \( (i,j) \)-entry is equal to \( 0
  \). This shows the claim and the proof follows by induction.
\end{proof}

By Theorem~\ref{thm:vec dual} and \eqref{eq:G,g}, and
\eqref{eq:C_lm=0} and \eqref{eq:c_lm=0}, for any
\( \lambda,\mu\in\Par \),
\begin{equation}\label{eq:Cc=d1}
  \sum_{\nu} C_{\lambda,\nu}(\va,\vb) c_{\mu,\nu}(\va,\vb) = \delta_{\lambda,\mu}.
\end{equation}
We also claim that the two matrices \(
(C_{\lambda,\mu}(\va,\vb))_{\lambda,\mu\in\Par} \) and \(
(c_{\mu,\lambda}(\va,\vb))_{\lambda,\mu\in\Par} \) are inverses of each other.
To derive this from \eqref{eq:Cc=d1}, it is enough to show that the matrix
\( (C_{\lambda,\mu}(\va,\vb))_{\lambda,\mu\in\Par} \) is invertible.
Due to \eqref{eq:C_lm=0} and the fact that \( C_{\lambda,\lambda}(\va,\vb)=1 \)
for all \( \lambda\in\Par \) from the definition \eqref{eq:C_lm}, the matrix is
triangular with ones on the diagonal.
Hence, \( I-(C_{\lambda,\mu}(\va,\vb))_{\lambda,\mu\in\Par} \) is
locally nilpotent, and \( \sum_{k\ge 0} (I-(C_{\lambda,\mu}(\va,\vb))_{\lambda,\mu\in\Par})^k  \) serves as an inverse of the matrix \( (C_{\lambda,\mu}(\va,\vb))_{\lambda,\mu\in\Par} \) where \( I=(\delta_{\lambda,\mu})_{\lambda,\mu\in\Par} \).
The claim gives us
\begin{equation}\label{eq:Cc=d2}
  \sum_{\nu} c_{\nu,\lambda}(\va,\vb) C_{\nu,\mu}(\va,\vb) = \delta_{\lambda,\mu}.
\end{equation}

As consequences of Theorem~\ref{thm:vec dual}, we have the following
corollaries: a Cauchy-type identity and Grothendieck expansions of a Schur function.

\begin{cor}
  We have
  \[
    \prod_{i,j\ge 1} \frac{1}{1-x_i y_j} = \sum_{\lambda\in\Par} G_\lambda(\vx;\va,\vb) g_\lambda(\vy;\va,\vb),
  \]
  where \( \vx=(x_1,x_2,\dots) \) and \( \vy=(y_1,y_2,\dots) \).
\end{cor}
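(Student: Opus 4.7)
The plan is to derive this Cauchy-type identity as a formal consequence of the duality established in Theorem~\ref{thm:vec dual} together with the classical Cauchy identity
\[
  \prod_{i,j\ge 1}\frac{1}{1-x_iy_j} \;=\; \sum_{\lambda\in\Par}s_\lambda(\vx)\,s_\lambda(\vy).
\]

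First I would substitute the Schur expansions from Corollary~\ref{cor:Schur_exp G,g} into the right-hand side and interchange the order of summation:
\begin{align*}
\sum_{\lambda\in\Par}G_\lambda(\vx;\va,\vb)\,g_\lambda(\vy;\va,\vb)
&= \sum_{\lambda\in\Par}\Bigl(\sum_{\mu\supseteq\lambda}C_{\lambda,\mu}(\va,\vb)\,s_\mu(\vx)\Bigr)
    \Bigl(\sum_{\nu\subseteq\lambda}c_{\lambda,\nu}(\va,\vb)\,s_\nu(\vy)\Bigr)\\
&= \sum_{\mu,\nu\in\Par}s_\mu(\vx)\,s_\nu(\vy)
   \sum_{\nu\subseteq\lambda\subseteq\mu}C_{\lambda,\mu}(\va,\vb)\,c_{\lambda,\nu}(\va,\vb).
\end{align*}
The interchange is legitimate because for each fixed pair $(\mu,\nu)$ the inner sum is supported on the finite interval $\{\lambda:\nu\subseteq\lambda\subseteq\mu\}$ in the containment order. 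Comparing with the classical Cauchy identity, it then suffices to prove the matrix identity
\[
  \sum_{\lambda\in\Par}C_{\lambda,\mu}(\va,\vb)\,c_{\lambda,\nu}(\va,\vb) \;=\; \delta_{\mu,\nu}
  \qquad\text{for all } \mu,\nu\in\Par.
\]

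To establish this, I would introduce the infinite matrices $M=(C_{\lambda,\mu})$ and $N=(c_{\lambda,\mu})$ indexed by $\Par\times\Par$. Expanding $\langle G_\lambda,g_\mu\rangle$ in the Schur basis and using orthonormality of $\{s_\lambda\}$, Theorem~\ref{thm:vec dual} translates precisely into $MN^T=I$, whereas what is needed is $M^TN=I$. By Lemma~\ref{lem:det(h)=0}, $M_{\lambda,\mu}=0$ unless $\lambda\subseteq\mu$ and $N_{\lambda,\mu}=0$ unless $\mu\subseteq\lambda$; inspecting the determinants in \eqref{eq:C_lm} and \eqref{eq:c_lm} at $\mu=\lambda$ shows that the matrices inside are upper-triangular with $1$s on the diagonal, so $C_{\lambda,\lambda}=c_{\lambda,\lambda}=1$. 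Ordering $\Par$ by $|\lambda|$ (with any tie-breaker), both $M$ and $N^T$ are upper-unitriangular infinite matrices, which are invertible with two-sided inverses in the ring of such matrices; therefore the one-sided relation $MN^T=I$ automatically upgrades to $N^TM=I$, and this is equivalent to $M^TN=I$ after transposition.

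The main point requiring care is not the combinatorics but the formal manipulation: justifying the interchange of infinite sums (which works because each coefficient of $s_\mu(\vx)s_\nu(\vy)$ is a finite sum) and the passage from one-sided to two-sided inverses for infinite matrices (which works because of the unitriangularity noted above). No new input beyond Theorem~\ref{thm:vec dual} and Corollary~\ref{cor:Schur_exp G,g} is needed.
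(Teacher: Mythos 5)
Your proof is correct and is exactly the argument the paper leaves implicit when it states this as a consequence of Theorem~\ref{thm:vec dual}: the duality $\langle G_\lambda,g_\mu\rangle=\delta_{\lambda,\mu}$ is the relation $MN^T=I$ for the Schur-coefficient matrices, the Cauchy identity is $M^TN=I$, and these are equivalent because both matrices are unitriangular with respect to containment (so the one-sided inverse is two-sided, all sums being finite). Your care with the finiteness of the intervals $\{\lambda:\nu\subseteq\lambda\subseteq\mu\}$ and with the diagonal entries $C_{\lambda,\lambda}=c_{\lambda,\lambda}=1$ supplies precisely the details the paper omits.
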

\begin{proof}
  We proceed similarly as in the proof of \cite[p.~63, (4.6)]{Macdonald}.
The right hand side is 
  \[
    \sum_{\lambda} \sum_{\mu} C_{\lambda,\mu}(\va,\vb)
    s_\mu(\vx) \sum_{\nu} c_{\lambda,\nu}(\va,\vb) s_\nu(\vy) =
    \sum_{\mu,\nu} \left( \sum_{\lambda}
      C_{\lambda,\mu}(\va,\vb)c_{\lambda,\nu}(\va,\vb)\right) s_\mu(\vx)s_\nu(\vy).
  \]
  By \eqref{eq:Cc=d2},  the above sum is equal to \( \sum_{\mu}s_\mu(\vx)s_\mu(\vy) \). The
  Cauchy identity \( \sum_{\mu}s_\mu(\vx)s_\mu(\vy)=\prod_{i,j\ge 1} (1-x_i y_j)^{-1} \)
  completes the proof.
\end{proof}

Since \( (C_{\lambda,\mu}(\va,\vb))_{\lambda,\mu\in\Par}\) is the inverse of
\((c_{\mu,\lambda}(\va,\vb))_{\lambda,\mu\in\Par} \),
Corollary~\ref{cor:Schur_exp G,g} implies the following result.

\begin{cor}
  For \( \lambda\in\Par \), we have
  \begin{align}
    \label{eq:G_expansion} s_\lambda(\vx)&=\sum_{\mu\supseteq \lambda} c_{\mu,\lambda}(\va,\vb) G_\mu(\vx;\va,\vb) ,\\
    \label{eq:g_expansion}s_\lambda(\vx)&=\sum_{\mu\subseteq \lambda} C_{\mu,\lambda}(\va,\vb) g_\mu(\vx;\va,\vb).
  \end{align}
\end{cor}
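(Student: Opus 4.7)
The plan is to read off the coefficients by pairing with the dual basis, using Theorem~\ref{thm:vec dual} together with the Schur expansions from Corollary~\ref{cor:Schur_exp G,g}. Since $\{G_\mu(\vx;\va,\vb)\}_{\mu\in\Par}$ and $\{g_\mu(\vx;\va,\vb)\}_{\mu\in\Par}$ are dual with respect to the Hall inner product, each Schur function should have a unique expansion in either family (in the appropriate completion), and the coefficients are obtained by taking inner products.

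First I would treat the second identity, which is the cleaner one since $g_\mu$ is a finite sum of Schur functions. Write $s_\lambda(\vx)=\sum_{\mu\in\Par} b_{\lambda,\mu}(\va,\vb) g_\mu(\vx;\va,\vb)$ as an ansatz, and pair both sides against $G_\nu(\vx;\va,\vb)$. By Theorem~\ref{thm:vec dual} this yields $b_{\lambda,\nu}(\va,\vb)=\langle s_\lambda(\vx),G_\nu(\vx;\va,\vb)\rangle$. Using the Schur expansion $G_\nu(\vx;\va,\vb)=\sum_{\rho\supseteq\nu}C_{\nu,\rho}(\va,\vb)s_\rho(\vx)$ from Corollary~\ref{cor:Schur_exp G,g} and orthonormality of Schur functions under the Hall inner product gives $b_{\lambda,\nu}(\va,\vb)=C_{\nu,\lambda}(\va,\vb)$, which vanishes unless $\nu\subseteq\lambda$. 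This yields the second identity.

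For the first identity, the same strategy works symmetrically: writing $s_\lambda(\vx)=\sum_{\mu\in\Par}a_{\lambda,\mu}(\va,\vb)G_\mu(\vx;\va,\vb)$ and pairing with $g_\nu(\vx;\va,\vb)$ gives $a_{\lambda,\nu}(\va,\vb)=\langle s_\lambda(\vx),g_\nu(\vx;\va,\vb)\rangle=c_{\nu,\lambda}(\va,\vb)$, which vanishes unless $\lambda\subseteq\nu$, yielding $s_\lambda=\sum_{\mu\supseteq\lambda}c_{\mu,\lambda}(\va,\vb)G_\mu(\vx;\va,\vb)$.

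The main subtlety (and the only real obstacle) is not algebraic but foundational: since $G_\mu(\vx;\va,\vb)$ has unbounded degree, the sum $\sum_{\mu\supseteq\lambda}c_{\mu,\lambda}(\va,\vb)G_\mu(\vx;\va,\vb)$ is infinite, so one must justify that it converges in the completion of $\QQ[\va,\vb]\otimes\Lambda$ discussed at the end of Section~\ref{sec:symmetric_function} and that the pairing with $g_\nu$ commutes with this infinite sum. This is handled by observing that for any fixed partition $\rho$, the coefficient of $s_\rho(\vx)$ in $\sum_{\mu\supseteq\lambda}c_{\mu,\lambda}(\va,\vb)G_\mu(\vx;\va,\vb)$ is the finite sum $\sum_{\lambda\subseteq\mu\subseteq\rho}c_{\mu,\lambda}(\va,\vb)C_{\mu,\rho}(\va,\vb)$, so the expression is well defined, and the computation of $\langle s_\lambda,g_\nu\rangle$ above is really just the $\rho=\lambda$ specialization of this same observation, reduced to a finite sum. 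With this justification in place the argument is complete; alternatively one may simply note that the two identities are formal consequences of the duality $\langle G_\mu,g_\nu\rangle=\delta_{\mu,\nu}$ and the transition matrices to the Schur basis being upper/lower triangular with respect to $\subseteq$.
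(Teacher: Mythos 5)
Your argument is correct and is essentially the paper's intended one: the corollary is stated there as an immediate formal consequence of the duality $\langle G_\lambda, g_\mu\rangle=\delta_{\lambda,\mu}$ (Theorem~\ref{thm:vec dual}) together with the triangular Schur expansions of Corollary~\ref{cor:Schur_exp G,g}, which is exactly the pairing computation you carry out. Your remarks on convergence of the infinite $G$-expansion in the completion, and the implicit use of unitriangularity ($C_{\lambda,\lambda}=c_{\lambda,\lambda}=1$) to justify the ansatz, are the right supporting details.
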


By Theorem~\ref{thm:vec dual}, both \(
\{G_\lambda(\vx;\va,\vb)\}_{\lambda\in\Par} \) and \(
\{g_\lambda(\vx;\va,\vb)\}_{\lambda\in\Par} \) are linearly independent sets.
Thus, \eqref{eq:g_expansion} implies that \(
\{g_\lambda(\vx;\va,\vb)\}_{\lambda\in\Par} \) is a basis of \(
\Lambda_{\QQ[\va,\vb]} \). Note that, even though \(
\{G_\lambda(\vx;\va,\vb)\}_{\lambda\in\Par}\not\subseteq \Lambda_{\QQ[\va,\vb]}
\), this set also behaves like a basis of \( \Lambda_{\QQ[\va,\vb]} \) in the
sense that every element of \( \Lambda_{\QQ[\va,\vb]} \) can be written
uniquely as an infinite linear combination of \( G_\lambda(\vx;\va,\vb) \)'s.

\section{Combinatorial models and Schur positivity}
\label{sec:lattice}

In this section we give combinatorial models for the Schur coefficients
\(C_{\lambda,\mu}(\va,\vb) \) and \(c_{\lambda,\mu}(\va,\vb)\) of
\(G_{\lambda}(\vx;\va,\vb)\) and \(g_{\lambda}(\vx;\va,\vb)\) obtained in the
previous section. As a consequence we show that \(G_{\lambda}(\vx;\va,-\vb)\)
and \(g_{\lambda}(\vx;-\va,\vb)\) are Schur-positive. This generalizes a result of
Yeliussizov~\cite[Section~9]{Yeliussizov2017} who found a lattice path
model and a tableau model for the Schur coefficients of \(
g_\lambda^{(\alpha,\beta)}(\vx)=g_{\lambda}(\vx;(-\alpha,-\alpha,\dots),
(\beta,\beta,\dots)) \) and consequently obtained the Schur positivity of
\( g_\lambda^{(\alpha,\beta)}(\vx) \).
Using crystals, Hawkes and Scrimshaw~\cite[Section~4]{HS20} showed the Schur
positivity of \(G^{(\alpha,\beta)}_{\lambda}(\vx) =
G_{\lambda}(\vx;(\alpha,\alpha,\dots),(-\beta,-\beta,\dots))\).

In order to state our results we need the following tableaux, which are modified
versions of elegant tableaux \cite{LP2007,Lenart2000}.

\begin{defn}
  A \emph{\( \ZZ \)-elegant tableau} of shape $\lm$ is a filling $T$ of the
  cells in $\lm$ with integers such that the rows are weakly increasing, the
  columns are strictly increasing, and
  \begin{equation}
    \label{eq:ETc1}
    \min(i-\mu_i,1) \le T(i,j) < i \qquad\mbox{for all $(i,j)\in \lm$,}
  \end{equation}
  or equivalently,
  \begin{equation}
    \label{eq:ETc2}
    \min(i-j,0) < T(i,j) < i \qquad\mbox{for all $(i,j)\in \lm$.}
  \end{equation}
Let $\ET_{\ZZ}(\lm)$ denote the set of all $\ZZ$-elegant
  tableaux of shape $\lm$. See Figure~\ref{fig:ET and IET}.
\end{defn}

The equivalence of \eqref{eq:ETc1} and \eqref{eq:ETc2} can be checked easily
using the condition that the rows are weakly increasing.

\begin{defn}
  A \emph{\( \ZZ \)-inelegant tableau} of shape $\ml$ is a filling $T$ of the
  cells in $\ml$ with integers such that the rows are weakly decreasing, the
  columns are strictly decreasing, and
  \begin{equation}
    \label{eq:IETc1}
    \min(\mu_i-i,0) < T(i,j) \le \lambda_i \qquad\mbox{for all $(i,j)\in \ml$,}
  \end{equation}
  or equivalently,
  \begin{equation}
    \label{eq:IETc2}
    \min(j-i,0) < T(i,j) < j \qquad\mbox{for all $(i,j)\in \ml$.}
  \end{equation}
  Let $\IET_{\ZZ}(\ml)$ denote the set of all $\ZZ$-inelegant tableaux of shape
  $\ml$. See Figure~\ref{fig:ET and IET}.
\end{defn}

The equivalence of \eqref{eq:IETc1} and \eqref{eq:IETc2} can be checked easily
using the condition that the rows are weakly decreasing.

\begin{figure}
  \centering
  \begin{ytableau}
    \none[-2 \le] & \none & \none & \none & \none& -1 &  \none &\none[<1]\\
    \none[\phantom{-} 0 \le] & \none & \none & \none & 0 & 1  &\none &\none[<2]\\
    \none[\phantom{-} 1 \le] & \none & 1 & 2 & 2 & 2 &\none &\none[<3]\\
    \none[\phantom{-} 1 \le] &\none & 2 & 3 & \none & \none & \none &\none[<4]

  \end{ytableau}\qquad\qquad\qquad
  \begin{ytableau}
    \none[3\ge] &\none& \none & \none & \none & 3 & 3 & 2 &\none & \none[>0\phantom{-}]\\
    \none[3\ge] &\none&\none & \none & \none & 2 & 2 & 1 &\none& \none[>0\phantom{-}]\\
    \none[1\ge] &\none&\none & 1 & 1 & 1 &\none&\none&\none & \none[>0\phantom{-}]\\
    \none[0\ge] &\none&0 & -1  & \none &\none&\none&\none&\none& \none[>-2]
  \end{ytableau}
  \caption{A \( \ZZ \)-elegant tableau of shape \( (4,4,4,2)/(3,2) \) on
    the left and a \( \ZZ \)-inelegant tableau of shape \(
    (6,6,4,2)/(3,3,1) \) on the right.
    }
  \label{fig:ET and IET}
\end{figure}

We now give combinatorial interpretations for the Schur coefficients
\(C_{\lambda,\mu}(\va,\vb)\) and \(c_{\lambda,\mu}(\va,\vb)\) of
\(G_{\lambda}(\vx;\va,\vb)\) and \(g_{\lambda}(\vx;\va,\vb)\) defined in
\eqref{eq:C_lm} and \eqref{eq:c_lm}.

\begin{thm}\label{thm:C_lm}
  Let \(\lambda,\mu \in\Par\) with \(\lambda\subseteq\mu\).
  We have 
  \[
    C_{\lambda,\mu}(\va,\vb) = \sum_{T\in\IET_\ZZ(\ml)}
    \prod_{(i,j)\in\ml} (\alpha_{T(i,j)} - \beta_{T(i,j)-c(i,j)}),
  \]  
  where \( \alpha_m=\beta_m=0 \) for \( m\le 0 \). 
  In particular, $G_\lambda(\vx;\va,-\vb)$ is Schur-positive, that is,
  $C_{\lambda,\mu}(\va,-\vb)$ is a polynomial in $\va$ and $\vb$ with
  nonnegative integer coefficients. 
\end{thm}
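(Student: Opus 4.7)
The plan is to prove the identity via the Lindstr\"om--Gessel--Viennot (LGV) lemma applied to an appropriate lattice path model, then deduce Schur positivity directly from the manifestly nonnegative combinatorial description together with Corollary~\ref{cor:Schur_exp G,g}.

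First I unpack each matrix entry. Using Proposition~\ref{prop:h[-Z]},
\[
h_{\mu_i-\lambda_j-i+j}[A_{\lambda_j}-B_{j-1}]
= \sum_{a+b=\mu_i-\lambda_j-i+j} h_a(\alpha_1,\dots,\alpha_{\lambda_j})\,(-1)^b e_b(\beta_1,\dots,\beta_{j-1}).
\]
Because $h_a$ enumerates weakly increasing sequences and $e_b$ enumerates strictly increasing sequences, the $(i,j)$-entry becomes the generating function for ``doubly labeled'' paths of total length $\mu_i-\lambda_j-i+j$ having $a$ weakly increasing $\alpha$-labels from $\{1,\dots,\lambda_j\}$ (each weighted by $\alpha_\ell$) and $b$ strictly increasing $\beta$-labels from $\{1,\dots,j-1\}$ (each weighted by $-\beta_\ell$).

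Next I arrange sources $S_1,\dots,S_n$ and sinks $E_1,\dots,E_n$ in $\ZZ^2$ so that the weighted count of such labeled paths from $S_j$ to $E_i$ equals $h_{\mu_i-\lambda_j-i+j}[A_{\lambda_j}-B_{j-1}]$ and so that, for the given ordering, only the identity-paired tuples admit non-intersecting configurations. Applying the LGV lemma to $C_{\lambda,\mu}(\va,\vb)=\det(h_{\mu_i-\lambda_j-i+j}[A_{\lambda_j}-B_{j-1}])_{i,j=1}^n$ then cancels all intersecting tuples in sign-reversing pairs, so that $C_{\lambda,\mu}(\va,\vb)$ equals the weighted sum over non-intersecting systems $(\pi_1,\dots,\pi_n)$ with $\pi_j\colon S_j\to E_j$.

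Then I construct a bijection between these non-intersecting path systems and $\ZZ$-inelegant tableaux $T\in\IET_\ZZ(\ml)$. Each cell $(i,j)\in\ml$ is identified with a unique step of some $\pi_j$, and $T(i,j)$ is defined to be the label of that step; a cell arising from an $\alpha$-step contributes $\alpha_{T(i,j)}$, while a cell arising from a $\beta$-step contributes $-\beta_{T(i,j)-c(i,j)}$. Weak decrease along rows comes from the weak/strict monotonicity of labels within a single path (after reindexing), strict decrease down columns comes from the non-intersecting condition, and the bounds $\min(j-i,0)<T(i,j)<j$ come from the label ranges together with the conventions $\alpha_m=\beta_m=0$ for $m\le 0$. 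Collecting terms yields
\[
C_{\lambda,\mu}(\va,\vb)=\sum_{T\in\IET_\ZZ(\ml)}\prod_{(i,j)\in\ml}\bigl(\alpha_{T(i,j)}-\beta_{T(i,j)-c(i,j)}\bigr),
\]
as claimed. Substituting $-\vb$ for $\vb$ turns each factor into $\alpha_{T(i,j)}+\beta_{T(i,j)-c(i,j)}$, so $C_{\lambda,\mu}(\va,-\vb)$ lies in $\NN[\va,\vb]$, and Corollary~\ref{cor:Schur_exp G,g} then exhibits $G_\lambda(\vx;\va,-\vb)=\sum_\mu C_{\lambda,\mu}(\va,-\vb)\,s_\mu(\vx)$ as Schur-positive.

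The main obstacle lies in the third step: the design of the lattice path model must accommodate label ranges that depend on the source index $j$ (namely $\{1,\dots,\lambda_j\}$ for $\alpha$-steps and $\{1,\dots,j-1\}$ for $\beta$-steps) while still allowing the LGV non-crossing condition to encode strict column decrease. Verifying that the cell-to-step correspondence correctly recovers the one-sided bound $\min(j-i,0)<T(i,j)<j$ in every position of the skew shape, in particular for cells near or below the diagonal where one of the two terms $\alpha_{T(i,j)}$ or $\beta_{T(i,j)-c(i,j)}$ is forced to vanish by the convention $\alpha_m=\beta_m=0$ for $m\le 0$, is the delicate bookkeeping that this proof hinges upon.
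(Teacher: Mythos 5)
Your overall strategy coincides with the paper's: expand the matrix entries, realize them as path generating functions, apply the Lindstr\"om--Gessel--Viennot lemma, and biject the nonintersecting path systems with $\ZZ$-inelegant tableaux; the Schur-positivity conclusion via Corollary~\ref{cor:Schur_exp G,g} is also the same. However, the central construction is missing, and you have flagged it yourself as ``the main obstacle.'' The step ``I arrange sources and sinks so that the weighted count of such labeled paths equals $h_{\mu_i-\lambda_j-i+j}[A_{\lambda_j}-B_{j-1}]$ and only the identity pairing admits nonintersecting configurations'' is precisely the content of the proof, not a routine setup. The paper supplies it with Proposition~\ref{prop: hm[A-B]} (a two-variable induction establishing the single-path generating function identity) together with Lemma~\ref{lem: h[A-B]}, which shifts the starting points from $(\mu_i-i,\min(\mu_i-i,0))$ to $(\mu_i-i,\min(\mu_i-i+1,1))$ to kill zero-weight initial west steps; that shift is exactly what produces the lower bound $\min(\mu_i-i,0)<T(i,j)$ in \eqref{eq:IETc1}. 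None of this is present in your sketch.

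Moreover, the specific model you propose --- paths with $a$ separate $\alpha$-steps (labels weakly increasing in $\{1,\dots,\lambda_j\}$) and $b$ separate $\beta$-steps (labels strictly increasing in $\{1,\dots,j-1\}$) --- creates an extra difficulty that the paper's model avoids. In your model each cell of $\ml$ receives \emph{either} an $\alpha$-weight \emph{or} a $\beta$-weight, and to recover the per-cell binomial $\alpha_{T(i,j)}-\beta_{T(i,j)-c(i,j)}$ you must group, for each fixed tableau $T$, the $2^{|\ml|}$ configurations obtained by independently choosing the step type at each cell; this requires showing that for every such choice there is exactly one valid labeled path system, even though the two alternatives at a cell carry different label values ($T(i,j)$ versus $T(i,j)-c(i,j)$) drawn from different alphabets, and that the interleaved monotonicity constraints (weak for $\alpha$-labels, strict for $\beta$-labels) are simultaneously equivalent to the single condition that the row of $T$ be weakly decreasing. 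The paper sidesteps all of this by putting the entire binomial $\alpha_b-\beta_{b-a}$ on a single west step at position $(a,b)$, so the cell entry is simply the height of the step and no grouping argument is needed. Your plan is salvageable, but as written it asserts rather than proves the existence of the model and defers exactly the points where the argument could fail.
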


\begin{thm}\label{thm:c_lm}
  Let \(\lambda,\mu \in\Par\) with \(\mu\subseteq\lambda\).
  We have
  \[
    c_{\lambda,\mu}(\va,\vb) = \sum_{T\in\ET_\ZZ(\lm)}
    \prod_{(i,j)\in\lm} (-\alpha_{T(i,j)+c(i,j)} + \beta_{T(i,j)}),
  \]  
  where \( \alpha_m=\beta_m=0 \) for \( m\le 0 \).
  In particular, $g_\lambda(\vx;-\va,\vb)$ is Schur-positive, that is,
  $c_{\lambda,\mu}(-\va,\vb)$ is a polynomial in $\va$ and $\vb$ with
  nonnegative integer coefficients. 
\end{thm}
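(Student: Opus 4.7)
The plan is to prove \eqref{eq:c_lm} via a Lindstr\"om--Gessel--Viennot argument on weighted lattice paths, in the spirit of Yeliussizov's treatment of the two-parameter special case in \cite[Section~9]{Yeliussizov2017}. The starting point is the identity
\[
  \sum_{k\ge 0} h_k[-A_{\lambda_i-1}+B_{i-1}]\, t^k
  = \prod_{s=1}^{\lambda_i-1}(1-\alpha_s t)\,\prod_{s=1}^{i-1}(1-\beta_s t)^{-1},
\]
which exhibits $h_{\lambda_i-\mu_j-i+j}[-A_{\lambda_i-1}+B_{i-1}]$ as the weighted enumeration of length-$(\lambda_i-\mu_j-i+j)$ label sequences in which each symbol is either an $\alpha$-label $s\in\{1,\ldots,\lambda_i-1\}$ used at most once (weight $-\alpha_s$), or a $\beta$-label $s\in\{1,\ldots,i-1\}$ with arbitrary multiplicity (weight $\beta_s$). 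I would then encode such label sequences as colored horizontal steps of lattice paths, place sources $u_j$ and sinks $v_i$ in the standard Jacobi--Trudi configuration, and apply the LGV lemma to \eqref{eq:c_lm}; all non-identity permutations of source-to-sink matchings cancel, so the determinant collapses to a weighted sum over $n$-tuples of non-intersecting paths, each going from $u_i$ to $v_i$.

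Next, I would convert each non-intersecting path system bijectively into a filling $T$ of $\lm$. Reading the labels of the $i$-th path's horizontal steps left to right and placing them in the cells of row $i$ of $\lm$, one sets $T(i,j)=s$ when the corresponding step carries the $\beta$-label $s$ and $T(i,j)=s-c(i,j)$ when it carries the $\alpha$-label $s$. The alphabet restrictions ($s\le\lambda_i-1$ for $\alpha$ and $s\le i-1$ for $\beta$) together with the non-intersection condition between consecutive rows match precisely the inequalities $\min(i-j,0)<T(i,j)<i$ combined with weak-row and strict-column monotonicity, i.e., the definition of $\ET_\ZZ(\lm)$. By construction the step-weight at the cell $(i,j)$ becomes $-\alpha_{T(i,j)+c(i,j)}+\beta_{T(i,j)}$, with the convention $\alpha_m=\beta_m=0$ for $m\le 0$ automatically absorbing the cells where only one of the two contributions is active (for instance, cells with $i\le j$ and $T(i,j)\le 0$). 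Multiplying over all cells yields the stated formula. Schur-positivity of $g_\lambda(\vx;-\va,\vb)$ is then immediate: the substitution $\va\mapsto-\va$ turns each $-\alpha_{T(i,j)+c(i,j)}$ into $+\alpha_{T(i,j)+c(i,j)}$, so $c_{\lambda,\mu}(-\va,\vb)$ is a polynomial in $\va$ and $\vb$ with nonnegative integer coefficients, and Schur-positivity follows from Corollary~\ref{cor:Schur_exp G,g}.

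I expect the main obstacle to be the bijection in the second paragraph. The elegance bounds \eqref{eq:ETc1}--\eqref{eq:ETc2} are asymmetric, with a ``kink'' at the diagonal $i=j$, and this kink must emerge naturally from the asymmetry of the two alphabets (size $\lambda_i-1$ for $\alpha$-labels versus size $i-1$ for $\beta$-labels) combined with the content-shift $T(i,j)=s-c(i,j)$ applied only to the $\alpha$-labels. Verifying that non-intersection translates exactly into row-weak and column-strict monotonicity of $T$---while allowing a single cell to pick up either an $\alpha$- or a $\beta$-contribution without over-counting---is the heart of the argument and the feature that distinguishes the refined statement from its classical (constant-parameter) ancestor.
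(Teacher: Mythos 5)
Your overall strategy coincides with the paper's: realize the entries of the determinant \eqref{eq:c_lm} as weighted lattice-path counts, apply the Lindstr\"om--Gessel--Viennot lemma, and biject the nonintersecting path systems with $\ZZ$-elegant tableaux, exactly as in the proof of Theorem~\ref{thm:C_lm} via Proposition~\ref{prop: hm[-A+B]} and Lemma~\ref{lem: h[-A+B]}. The difference is in how you turn the plethysm $h_m[-A_{\lambda_i-1}+B_{i-1}]$ into path weights. The paper assigns to the single east step $(a,b)\to(a+1,b)$ the \emph{binomial} weight $-\alpha_{a+b+1}+\beta_b$, determined purely by the lattice position of the step; this identity is established by a two-term recursion using Lemma~\ref{lem:h_m[Z]}, and then the tableau entry $T(i,j)$ is simply the height of the corresponding step, so the per-cell factor $-\alpha_{T(i,j)+c(i,j)}+\beta_{T(i,j)}$ appears with no further work. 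You instead expand $h_m[-A_{r}+B_s]=\sum_{a+b=m}(-1)^a e_a[A_r]\,h_b[B_s]$ into two separate alphabets and give each step a single label, which forces you to recombine the two label families into one row of the tableau afterwards.

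That recombination is where your write-up has a genuine gap, and you have correctly located it yourself. To recover the product of binomials $\prod_{(i,j)}(-\alpha_{T(i,j)+c(i,j)}+\beta_{T(i,j)})$ from single-labeled paths you must prove an interleaving statement: for each row $i$, every pair (set of distinct $\alpha$-labels in $\{1,\dots,\lambda_i-1\}$, multiset of $\beta$-labels in $\{1,\dots,i-1\}$) of total size $\lambda_i-\mu_i$ admits \emph{exactly one} assignment to the cells of the row for which the entries $T(i,j)=s-c(i,j)$ (for $\alpha$-labels $s$) and $T(i,j)=s$ (for $\beta$-labels) are weakly increasing; that this unique interleaving always satisfies the bounds \eqref{eq:ETc2} (the upper bound $T(i,j)<i$ holds at the rightmost cell for either label type and then propagates leftwards by weak increase, but this needs to be said); that conversely every two-coloring of every $T\in\ET_\ZZ(\lm)$ yields admissible labels (here you need that $T(i,j)+c(i,j)$ is strictly increasing along a row, so the $\alpha$-indices are automatically distinct); and that nonintersection of the colored paths is equivalent to column-strictness of $T$ independently of the coloring. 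None of this is carried out, and it is precisely the content of the theorem in your encoding. The argument can be completed along these lines, but you would save yourself all of it by proving the single identity of Proposition~\ref{prop: hm[-A+B]} first (a short induction on the endpoint, exactly parallel to Proposition~\ref{prop: hm[A-B]}) and attaching the binomial weight to each step; you would also then need the small endpoint adjustment of Lemma~\ref{lem: h[-A+B]} (replacing the starting height $\min(j-\mu_j,0)$ by $\min(j-\mu_j,1)$), which your ``standard Jacobi--Trudi configuration'' glosses over. The final paragraph on Schur positivity is correct as stated.
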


By Corollary~\ref{cor:Schur_exp G,g} and the above theorems, we obtain
combinatorial interpretations for \( G_\lambda(\vx;\va,\vb) \) and \(
g_\lambda(\vx;\va,\vb) \) using pairs of tableaux as follows.

\begin{cor}\label{cor:G_comb2}
  For \( \lambda\in \Par \), we have
  \begin{align*}
    G_\lambda(\vx;\va,\vb) = \sum_{\mu\supseteq\lambda}
    \sum_{(E,T)\in \IET_{\ZZ}(\mu/\lambda) \times \SSYT(\mu)} \wt(E) \vx^{T},
  \end{align*}
  where \( \wt(E) = \prod_{(i,j)\in \ml} (\alpha_{E(i,j)}-\beta_{E(i,j)-c(i,j)}) \).
\end{cor}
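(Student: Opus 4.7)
The plan is to obtain this identity as a direct consequence of two results already established in the excerpt: the Schur expansion of $G_\lambda(\vx;\va,\vb)$ in Corollary~\ref{cor:Schur_exp G,g} and the combinatorial interpretation of the Schur coefficients in Theorem~\ref{thm:C_lm}. Since the statement is explicitly described in the text as a corollary of these results, the work reduces to a substitution and a rearrangement of sums; there is no real obstacle.

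First, I would invoke Corollary~\ref{cor:Schur_exp G,g} to write
\[
  G_\lambda(\vx;\va,\vb) = \sum_{\mu \supseteq \lambda} C_{\lambda,\mu}(\va,\vb)\, s_\mu(\vx).
\]
Next, I would replace $C_{\lambda,\mu}(\va,\vb)$ by its combinatorial expansion from Theorem~\ref{thm:C_lm}:
\[
  C_{\lambda,\mu}(\va,\vb) = \sum_{E \in \IET_\ZZ(\mu/\lambda)} \wt(E),
  \qquad \wt(E) = \prod_{(i,j)\in\mu/\lambda}\bigl(\alpha_{E(i,j)} - \beta_{E(i,j)-c(i,j)}\bigr),
\]
and substitute the standard tableau formula $s_\mu(\vx) = \sum_{T \in \SSYT(\mu)} \vx^T$ that appears in Section~\ref{sec:symmetric_function}.

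Finally, I would interchange the resulting finite products of sums: for each fixed $\mu \supseteq \lambda$, the product
\[
  \Bigl(\sum_{E \in \IET_\ZZ(\mu/\lambda)} \wt(E)\Bigr)\Bigl(\sum_{T \in \SSYT(\mu)} \vx^T\Bigr)
  = \sum_{(E,T)\in \IET_\ZZ(\mu/\lambda)\times \SSYT(\mu)} \wt(E)\,\vx^T,
\]
and summing over $\mu \supseteq \lambda$ yields the claimed identity. The only technical comment worth making is that the outer sum over $\mu$ is well defined as a formal power series in $\vx$ with coefficients in $\QQ[\va,\vb]$, since for any fixed monomial $\vx^T$ the shape $\mu$ of $T$ is determined and only finitely many elegant fillings contribute in a fixed total degree; this is the same convergence consideration that makes the Schur expansion in Corollary~\ref{cor:Schur_exp G,g} meaningful in the completion discussed in Section~\ref{sec:symmetric_function}.
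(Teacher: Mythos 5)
Your proposal is correct and matches the paper's own derivation: the corollary is obtained exactly by combining the Schur expansion in Corollary~\ref{cor:Schur_exp G,g} with the combinatorial formula for $C_{\lambda,\mu}(\va,\vb)$ in Theorem~\ref{thm:C_lm} and the tableau description of $s_\mu(\vx)$. Your additional remark on convergence of the outer sum over $\mu$ is a reasonable bonus but not something the paper spells out.
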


\begin{cor}\label{cor:g_comb2}
  For \( \lambda\in \Par \), we have
  \begin{align*}
    g_\lambda(\vx;\va,\vb) = \sum_{\mu\subseteq \lambda}\sum_{(E,T) \in \ET_{\ZZ}(\lm) \times \SSYT(\mu)} \wt'(E) \vx^{T},
  \end{align*}
  where \( \wt'(E) = \prod_{(i,j)\in \lm} (-\alpha_{E(i,j)+c(i,j)}+\beta_{E(i,j)}) \).
\end{cor}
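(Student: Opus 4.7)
The plan is to assemble this corollary directly from the two ingredients already in hand: the Schur expansion of $g_\lambda(\vx;\va,\vb)$ from Corollary~\ref{cor:Schur_exp G,g}, and the combinatorial formula for the coefficients $c_{\lambda,\mu}(\va,\vb)$ from Theorem~\ref{thm:c_lm}. No new combinatorial argument is needed; the corollary is essentially a formal manipulation.

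Concretely, I would start from
\[
g_\lambda(\vx;\va,\vb) = \sum_{\mu \subseteq \lambda} c_{\lambda,\mu}(\va,\vb)\, s_\mu(\vx),
\]
which is the content of Corollary~\ref{cor:Schur_exp G,g}. Next I would substitute the expression
\[
c_{\lambda,\mu}(\va,\vb) = \sum_{E \in \ET_\ZZ(\lm)} \prod_{(i,j)\in\lm}\bigl(-\alpha_{E(i,j)+c(i,j)} + \beta_{E(i,j)}\bigr) = \sum_{E\in\ET_\ZZ(\lm)} \wt'(E)
\]
from Theorem~\ref{thm:c_lm}, and I would expand $s_\mu(\vx)$ as a generating function over semistandard Young tableaux,
\[
s_\mu(\vx) = \sum_{T \in \SSYT(\mu)} \vx^T.
\]
Combining these three identities and interchanging the order of summation (all sums are well-defined termwise since for each fixed monomial only finitely many $\mu$ contribute to a given coefficient in the weight), I immediately obtain
\[
g_\lambda(\vx;\va,\vb) = \sum_{\mu \subseteq \lambda} \sum_{(E,T)\in \ET_\ZZ(\lm)\times\SSYT(\mu)} \wt'(E)\,\vx^T,
\]
which is the claimed identity.

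Since the two ingredients have already been established, there is no real obstacle here; the only thing to check is that the pairs $(E,T)$ range independently (which is immediate because the shapes $\lm$ and $\mu$ are determined separately from $\lambda$ and $\mu$), and that all sums can be rearranged. The analogous derivation of Corollary~\ref{cor:G_comb2} from Corollary~\ref{cor:Schur_exp G,g} and Theorem~\ref{thm:C_lm} works in exactly the same way, so I would present the two corollaries together with a single short proof.
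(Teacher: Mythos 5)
Your proposal is correct and matches the paper's own derivation exactly: the paper obtains Corollary~\ref{cor:g_comb2} by combining the Schur expansion in Corollary~\ref{cor:Schur_exp G,g} with the combinatorial formula for \( c_{\lambda,\mu}(\va,\vb) \) in Theorem~\ref{thm:c_lm} and expanding each \( s_\mu(\vx) \) over \( \SSYT(\mu) \). Nothing further is needed.
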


Before proving Theorems~\ref{thm:C_lm} and \ref{thm:c_lm} we show that these
theorems generalize the combinatorial models for the Schur coefficients of \(
G_\lambda(\vx)= G_\lambda(\vx;\bm0,\bm1) \) and \(
g_\lambda(\vx)=g_\lambda(\vx;\bm0,\bm1) \) due to Lenart \cite{Lenart2000} and
Lam and Pylyavskyy \cite{LP2007}, where \( \bm0=(0,0,\dots) \) and \(
\bm1=(1,1,\dots) \). Note that Lenart \cite{Lenart2000} uses a different
definition for the Grothendieck polynomial and denotes it by \(
\mathfrak{G}_\lambda \). It follows from \cite[(2.2),(2.3)]{Lenart2000} that \(
\mathfrak{G}_\lambda = G_\lambda(\vx) \).

Lenart \cite[Theorem~2.2]{Lenart2000} showed that
\[
  G_\lambda(\vx)= \sum_{\mu\supseteq\lambda} (-1)^{|\ml|} C_{\lambda,\mu} s_\mu(\vx),
\]
where \( C_{\lambda,\mu} \) is the number of tableaux \( T \) of shape \( \ml \)
with integer entries such that the rows and columns are strictly increasing, and
\(0< T(i,j) <i \) for all \( (i,j)\in\ml \). For such a tableau \( T \), define
\( R \) to be the tableau of shape \( \ml \) such that \( R(i,j) = j-T(i,j) \)
for all \( (i,j)\in\ml \). Then the tableau \( R \) satisfies the conditions
that the rows are weakly decreasing, the columns are strictly decreasing, and \(
j-i<R(i,j)<j \), and \( C_{\lambda,\mu} \) is equal to the number of such
tableaux. 

On the other hand, by Theorem~\ref{thm:C_lm}, we have \(
C_{\lambda,\mu}(\bm0,\bm1)=(-1)^{|\ml|}C'_{\lambda,\mu} \), where
\( C'_{\lambda,\mu} \) is the number of \( S\in\IET_\ZZ(\ml) \) such that
\( S(i,j)-c(i,j)>0 \) for all \( (i,j)\in\ml \). This condition combined with
\eqref{eq:IETc2} can be written as \( j-i<S(i,j)<j \) for all \( (i,j)\in\ml \).
Thus \( C'_{\lambda,\mu}=C_{\lambda,\mu} \).

Lam and Pylyavskyy \cite[Theorem~9.8]{LP2007} showed that 
\[
  g_\lambda(\vx) = \sum_{\mu\subseteq\lambda}c_{\lambda,\mu} s_\mu(\vx),
\]
where \( c_{\lambda,\mu} \) is the number of semistandard Young tableaux \( T \)
of shape \( \lm \) such that \( T(i,j)<i \) for \( (i,j)\in\lm \).
It follows immediately from Theorem~\ref{thm:c_lm} that
\( c_{\lambda,\mu}(\bm{0},\bm{1})=c_{\lambda,\mu} \).

\medskip

For the rest of this section we prove Theorems~\ref{thm:C_lm} and
\ref{thm:c_lm}. We first express \(C_{\lambda,\mu}(\va,\vb) \) and
\(c_{\lambda,\mu}(\va,\vb)\) in terms of nonintersecting lattice paths using the
Lindstr\"om--Gessel--Viennot lemma \cite{LGV,Lindstrom}, see also
\cite[Theorem~2.7.1]{EC1}. The nonintersecting lattice paths are then easily
transformed into the desired tableaux. We begin with some definitions.

A \emph{weighted lattice graph} is a directed graph with vertex set \(
\ZZ\times\ZZ \) in which every edge is assigned a weight. A directed edge from
\( (a,b) \) to \( (c,d) \) is denoted by \( (a,b)\to(c,d) \) and its
weight is denoted by \( \wt((a,b)\to(c,d)) \).
In this paper, we only consider three kinds of edges, namely, an \emph{east
  step} \( (a,b)\to(a+1,b) \), a \emph{north step} \( (a,b)\to(a,b+1) \), and a
\emph{west step} \( (a,b)\to(a-1,b) \).

Let \( \Gamma \) be a weighted lattice graph.  For two vertices \( u \) and \( v \) of \( \Gamma
\), a \emph{path} from \( u \) to \( v \) is a sequence of vertices \(
(u_0,u_1,\dots,u_k) \) of \( \Gamma \) such that \( u_0=u \), \( u_k=v \) and each
step \( u_i\to u_{i+1} \) is a directed edge in \( \Gamma \). For a path \( p \),
define \( \wt(p) \) to be the product of the weights of the edges in \( p \).
For vertices \( u \) and \( v \), denote by \( P_{\Gamma}(u,v)\) the set of paths from \(
u \) to \( v \) and define
\[
  \wt(P_{\Gamma}(u,v))=\sum_{p\in P_{\Gamma}(u,v)}{\wt(p)}.
\]
An \emph{\( n \)-path} is an \( n \)-tuple \( \vp=(p_1,\dots,p_n) \) of
paths. We define
\[
  \wt(\vp)=\prod^n_{i=1}{\wt(p_i)}.
\]

For the rest of this section we assume \( \alpha_k=\beta_k=0 \) for \( k\le0 \).

The following two propositions give combinatorial meanings of the plethystic
substitution of the form \( h_r[A_s-B_t] \) using lattice paths.

\begin{prop} \label{prop: hm[A-B]}
  Let \( \Gamma \) be the weighted lattice graph with edge set
  \[
    \{(a,b)\to (a-1,b):a,b\in\ZZ\} \cup \{(a,b)\to (a,b+1):a,b\in\ZZ\},
  \]
  where the weights on edges are given by
  \( \wt((a,b)\to(a-1,b))=\alpha_b-\beta_{-a+b} \) and
  \( \wt((a,b)\to(a,b+1))=1 \).
  For integers \(r,s\), and \(t\) with \( s\ge \min(t,0) \), we have
  \begin{equation*}
    h_{t-r}[A_s-B_{s-r-1}] = \wt(P_{\Gamma}((t,\min(t,0)),(r,s))),
  \end{equation*}
  where \( A_i = B_i = 0 \) for \( i\le 0 \) by convention.
\end{prop}
\begin{proof}
  Let \( L(r,s,t) \) (resp.~\( R(r,s,t) \)) be the left hand side (resp.~right hand side) of the equation. We will show that 
  \( L(r,s,t)=R(r,s,t) \) for all integers \( r, s \), and \( t \) with
  \( s\ge\min(t,0) \) by induction on the pair \( (r,s) \) when \( t \)
  is fixed. We will only show the case \( t\ge0 \) because the other case
  \( t<0 \) can be proved similarly.
  
  Let \( t \) be a fixed integer with \( t\ge0 \). We proceed by induction
  on \( (r,s) \) with base cases being \( r>t \) or \( s=0 \). Note that
  \( R(r,s,t) \) is the sum of the weights of the paths from \( (t,0) \)
  to \( (r,s) \) and the west step \( (t,0)\to(t-1,0) \) from the starting point
  has weight \( 0\).
  If \( r>t \), then by definition we have
  \( L(r,s,t)=R(r,s,t)=0 \). If \( r\le t \) and \( s=0 \), then
  \begin{align*}
    L(r,s,t) &=h_{t-r}[-B_{-r-1}]=(-1)^{t-r}e_{t-r}[B_{-r-1}]=\delta_{r,t},\\
    R(r,s,t) &=\wt(P_{\Gamma}((t,0),(r,0)))
    =\delta_{r,t},
  \end{align*}
  and therefore \( L(r,s,t)=R(r,s,t) \).
  Here, we used the fact that \( e_a[B_b]=e_a(\beta_1,\dots,\beta_b)=0 \)
  if \( 0<a \) and \( b < a \).

  Now let \( M\le t \) and \( N>0 \), and suppose that the result holds
  when \( (r,s) \) is \( (M,N-1) \) or \( (M+1,N) \). Consider the case
  \( (r,s)=(M,N) \). By applying Lemma~\ref{lem:h_m[Z]} twice, we have
  \begin{align*}
    & L(r,s,t)=h_{t-r}[A_s-B_{s-r-1}] =h_{t-r}[A_s-B_{s-r-2}]-\beta_{s-r-1}h_{t-r-1}[A_s-B_{s-r-2}]\\
    &=(h_{t-r}[A_{s-1}-B_{s-r-2}]+\alpha_s h_{t-r-1}[A_s-B_{s-r-2}])-\beta_{s-r-1}h_{t-r-1}[A_s-B_{s-r-2}]\\
    &=L(r,s-1,t)+(\alpha_s-\beta_{s-r-1})L(r+1,s,t).
  \end{align*}
  On the other hand, by the definition
  of \( R(r,s,t) \), it is easy to see that
  \[
    R(r,s,t)=R(r,s-1,t)+(\alpha_s-\beta_{s-r-1})R(r+1,s,t).
  \]
  By the above two equations and the induction hypothesis, we obtain
  \( L(r,s,t)=R(r,s,t) \), and the proof follows by induction.
\end{proof}

\begin{prop} \label{prop: hm[-A+B]}
  Let \( \Gamma \) be the weighted lattice graph with edge set
  \[
    \{(a,b)\to (a+1,b):a,b\in\ZZ\} \cup \{(a,b)\to (a,b+1):a,b\in\ZZ\},
  \]
  where the weights on edges are given by
  \( \wt((a,b)\to(a+1,b))=-\alpha_{a+b+1}+\beta_b \) and
  \( \wt((a,b)\to(a,b+1))=1 \). For integers \(r,s\), and \(t\) with
  \( s\ge \min(-t-1,0) \), we have
  \begin{equation*}
    h_{r-t}[-A_{r+s}+B_s] = \wt(P_{\Gamma}((t,\min(-t-1,0)),(r,s))). 
  \end{equation*}
 \end{prop}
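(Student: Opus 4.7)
The plan is to parallel the proof of Proposition~\ref{prop: hm[A-B]}. Set
\[
  L(r,s,t) = h_{r-t}[-A_{r+s}+B_s], \qquad R(r,s,t) = \wt(P_\Gamma((t,\min(-t-1,0)),(r,s))),
\]
fix $t$, and proceed by induction on the pair $(r,s)$. The argument naturally splits according to the sign of $t$ (the starting point is $(t,-t-1)$ when $t\ge 0$ and $(t,0)$ when $t<0$); since the two subcases are completely analogous, I would only write out one of them in full detail, as is done in the proof of Proposition~\ref{prop: hm[A-B]}.

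For the base cases, I would verify $L=R$ when $r<t$ and when $s=\min(-t-1,0)$. The case $r<t$ is immediate: $L=0$ because $h_m=0$ for $m<0$, and $R=0$ because east/north paths cannot decrease the $x$-coordinate. When $s=\min(-t-1,0)$ the only allowed path is east-only; using the vanishing conventions $\alpha_k=\beta_k=0$ for $k\le 0$ one checks that the leftmost east step already has weight $0$ whenever $r>t$, which forces $R=\delta_{r,t}$. On the $L$ side $B_s=0$ in both subcases, so
\[
  L(r,s,t) = (-1)^{r-t}e_{r-t}(\alpha_1,\dots,\alpha_{r+s}),
\]
and the strict inequality $r+s<r-t$ (which holds in both subcases whenever $r>t$, with the convention that $A_{r+s}=0$ for $r+s\le 0$) makes this elementary symmetric polynomial vanish, giving $L=\delta_{r,t}$.

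For the inductive step, I would show that both $L$ and $R$ satisfy the common recurrence
\[
  X(r,s,t) = X(r,s-1,t) + (-\alpha_{r+s}+\beta_s)\,X(r-1,s,t).
\]
For $R$, this is the decomposition of paths at the endpoint $(r,s)$: a path arrives either by a north step from $(r,s-1)$, of weight $1$, or by an east step from $(r-1,s)$, of weight $-\alpha_{r+s}+\beta_s$. For $L$, I would invoke Lemma~\ref{lem:h_m[Z]} twice. A first application with $Z = -A_{r+s-1}+B_s$ and $cz = \alpha_{r+s}$, followed by rearrangement, produces
\[
  L(r,s,t) = h_{r-t}[-A_{r+s-1}+B_s] - \alpha_{r+s}\,L(r-1,s,t),
\]
and a second application of the lemma with the same $Z$ and $cz=\beta_s$ yields $h_{r-t}[-A_{r+s-1}+B_s] = L(r,s-1,t) + \beta_s\,L(r-1,s,t)$. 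Substituting the second into the first gives the desired recurrence, and the induction hypothesis (valid because the recurrence reduces $(r,s)$ either into the inductive region or into the base cases identified above) completes the argument.

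The entire proof mirrors that of Proposition~\ref{prop: hm[A-B]}; the only mildly delicate point is the case split on the sign of $t$, needed to fix the correct starting $y$-coordinate, combined with careful bookkeeping of the vanishing conventions $\alpha_k=\beta_k=0$ for $k\le 0$ in the base case.
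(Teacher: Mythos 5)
Your proof is correct and takes essentially the same approach as the paper: the paper's proof of this proposition simply states that it ``can be proved similarly as Proposition~\ref{prop: hm[A-B]}'', which is exactly the induction you carry out, and your base cases (east-only paths whose first step has weight $0$ by the conventions $\alpha_k=\beta_k=0$ for $k\le 0$) and the two applications of Lemma~\ref{lem:h_m[Z]} giving the common recurrence $X(r,s,t)=X(r,s-1,t)+(-\alpha_{r+s}+\beta_s)X(r-1,s,t)$ all check out. The paper additionally records a slicker alternative, deducing the statement from Proposition~\ref{prop: hm[A-B]} via the reflection $(x,y)\mapsto(-x-1,y)$, replacing $r,t$ by $-r-1,-t-1$ and interchanging $\va$ and $\vb$, but that is offered only as a second option.
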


\begin{proof}
  This can be proved similarly as Proposition~\ref{prop: hm[A-B]}. Alternatively,
  this can be obtained from Proposition~\ref{prop: hm[A-B]} by applying the
  transformation \( (x,y)\mapsto (-x-1,y) \),
  replacing \( r \) and \( t \) by \( -r-1 \) and \( -t-1 \), respectively,
  and interchanging \( \va \) and \( \vb \).
\end{proof}

Using the above propositions we can interpret the \( (i,j) \)-entry of the
matrices in the
definitions of \( C_{\lambda,\mu}(\va,\vb) \) and \( c_{\lambda,\mu}(\va,\vb) \)
in \eqref{eq:C_lm} and \eqref{eq:c_lm} as the sums of weights of paths.

\begin{lem}\label{lem: h[A-B]}
  Let \( \Gamma \) be the weighted lattice graph in Proposition~\ref{prop: hm[A-B]}.
  For \(\lambda,\mu\in\Par_n\) and \(i,j\in \{1,\dots,n\}\), we have
  \begin{align}\label{eq:P(u+1,v)}
    h_{\mu_i-\lambda_j-i+j}[A_{\lambda_j}-B_{j-1}]
    =\wt(P_{\Gamma}((\mu_i-i,\min(\mu_i-i+1,1)),(\lambda_j-j,\lambda_j))).
  \end{align}
\end{lem}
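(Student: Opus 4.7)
The plan is to deduce Lemma~\ref{lem: h[A-B]} as a direct specialization of Proposition~\ref{prop: hm[A-B]}, after reconciling a one-unit shift in the starting \( y \)-coordinate. First I would match parameters by setting
\[
  t = \mu_i - i, \qquad r = \lambda_j - j, \qquad s = \lambda_j.
\]
Under this substitution, \( t - r = \mu_i - \lambda_j - i + j \) and \( s - r - 1 = j - 1 \), so the homogeneous symmetric function on the left-hand side matches exactly. The proposition's hypothesis \( s \ge \min(t, 0) \) becomes \( \lambda_j \ge \min(\mu_i - i, 0) \), which holds trivially since \( \lambda_j \ge 0 \). This yields
\[
  h_{\mu_i - \lambda_j - i + j}[A_{\lambda_j} - B_{j-1}]
  = \wt(P_{\Gamma}((\mu_i - i, \min(\mu_i - i, 0)), (\lambda_j - j, \lambda_j))).
\]

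Next I would absorb the difference between \( \min(\mu_i - i, 0) \) and the lemma's starting \( y \)-coordinate \( \min(\mu_i - i + 1, 1) = \min(\mu_i - i, 0) + 1 \), which represents a single north step upward. Paths from \( (\mu_i - i, \min(\mu_i - i, 0)) \) that begin with a north step are in bijection with all paths from the shifted starting point (via deletion of that first step), and this bijection is weight-preserving because north edges carry weight \(1\). So it suffices to show that every path from the lower start whose first step is a west step contributes weight \(0\), and for that it is enough to show that this single west edge itself has weight \(0\).

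The crux is therefore the weight of the edge \( (\mu_i - i, \min(\mu_i - i, 0)) \to (\mu_i - i - 1, \min(\mu_i - i, 0)) \), which by definition equals \( \alpha_{\min(\mu_i - i, 0)} - \beta_{-(\mu_i - i) + \min(\mu_i - i, 0)} \). When \( \mu_i - i \ge 0 \) the two subscripts are \( 0 \) and \( -(\mu_i - i) \le 0 \); when \( \mu_i - i < 0 \) they are \( \mu_i - i < 0 \) and \( 0 \). In every case both subscripts lie in \( \{k : k \le 0\} \), so by the convention \( \alpha_k = \beta_k = 0 \) for \( k \le 0 \) the edge weight vanishes. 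This case check is the only substantive step; since the conceptual heavy lifting is already done in Proposition~\ref{prop: hm[A-B]}, there is no real obstacle beyond this bookkeeping of the one-step shift.
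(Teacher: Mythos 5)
Your proposal is correct and follows essentially the same route as the paper: specialize Proposition~\ref{prop: hm[A-B]} with \( r=\lambda_j-j \), \( s=\lambda_j \), \( t=\mu_i-i \), and then shift the starting point up by one unit using the fact that the west edge out of \( (\mu_i-i,\min(\mu_i-i,0)) \) has weight \( \alpha_{k}-\beta_{k'} \) with both subscripts nonpositive, hence zero. The only cosmetic difference is that the paper splits into the cases \( \lambda_j>\min(\mu_i-i,0) \) and \( \lambda_j=\min(\mu_i-i,0) \) (the latter to handle the situation where the shifted start admits no paths at all), whereas your first-step dichotomy treats both uniformly; this works because, as one checks, the start never coincides with the endpoint, so every path has a first step.
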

\begin{proof}
  By Proposition~\ref{prop: hm[A-B]} with \(r=\lambda_j-j\),
  \(s=\lambda_j\), and \(t=\mu_i-i\), we have 
  \begin{align}\label{eq:P(u,v)}
    h_{\mu_i-\lambda_j-i+j}[A_{\lambda_j}-B_{j-1}]
    =\wt(P_{\Gamma}((\mu_i-i,\min(\mu_i-i,0)),(\lambda_j-j,\lambda_j))).
  \end{align}
  Since \( \lambda_j\ge0 \), we have \( \lambda_j \ge\min(\mu_i-i,0)  \).
  We consider the two cases \( \lambda_j >\min(\mu_i-i,0) \) and \( \lambda_j = \min(\mu_i-i,0) \).

  Suppose \( \lambda_j >\min(\mu_i-i,0)  \). It is straightforward to check that
  the weight of the west step \((\mu_i-i,\min(\mu_i-i,0)) \to
  (\mu_i-i-1,\min(\mu_i-i,0)) \) is zero. Thus when we compute
  \( \wt(P_{\Gamma}((\mu_i-i,\min(\mu_i-i,0)),(\lambda_j-j,\lambda_j))) \)
  we only need to consider paths starting with a north step.
  This implies that we can replace the starting point by \( (\mu_i-i,\min(\mu_i-i+1,1)) \)
  and the result follows.
  
  Now suppose \( \lambda_j = \min(\mu_i-i,0) \). Then \( \lambda_j=0 \) and \( \mu_i-i \ge 0 \). 
  Thus
  \begin{align*}
    P_{\Gamma}((\mu_i-i,\min(\mu_i-i+1,1)),(\lambda_j-j,\lambda_j)) 
    &=P_{\Gamma}((\mu_i-i,1),(-j,0)) = \emptyset, \\
    P_{\Gamma}((\mu_i-i,\min(\mu_i-i,0)),(\lambda_j-j,\lambda_j))
    &=P_{\Gamma}((\mu_i-i,0),(-j,0)) = \{p\},
  \end{align*}
  where \( p \) is the path consisting of \( \mu_i-i+j \) west steps. Since \( p
  \) starts with a west step whose weight is \( 0 \) we have \( \wt(p)=0 \).
  Therefore the right hand sides of \eqref{eq:P(u+1,v)} and \eqref{eq:P(u,v)}
  are both equal to \( 0 \), and we get the result.
\end{proof}

\begin{lem} \label{lem: h[-A+B]}
  Let \( \Gamma \) be the weighted lattice graph in Proposition~\ref{prop: hm[-A+B]}.
  For \(\lambda,\mu\in\Par_n\) with \( \lambda_1 > 0 \) and
 for \(i,j\in \{1,\dots,n\}\), we have
  \[
    h_{\lambda_i-\mu_j-i+j}[-A_{\lambda_i-1}+B_{i-1}]
    =\wt(P_{\Gamma}((\mu_j-j,\min(j-\mu_j,1)),(\lambda_i-i,i-1))).
  \]
\end{lem}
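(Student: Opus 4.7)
The plan is to mirror the proof of Lemma~\ref{lem: h[A-B]} almost verbatim. First I would apply Proposition~\ref{prop: hm[-A+B]} directly with the substitutions $r=\lambda_i-i$, $s=i-1$, $t=\mu_j-j$. The hypothesis $s \ge \min(-t-1,0)$ of the proposition becomes $i-1 \ge \min(j-\mu_j-1, 0)$, which is automatic since $i-1 \ge 0$ and the right-hand side is nonpositive. This yields
\[
  h_{\lambda_i-\mu_j-i+j}[-A_{\lambda_i-1}+B_{i-1}]
  = \wt(P_{\Gamma}((\mu_j-j,\min(j-\mu_j-1,0)),(\lambda_i-i,i-1))).
\]
The remaining work is to shift the starting $y$-coordinate upward by one, transforming $\min(j-\mu_j-1,0)$ into $\min(j-\mu_j,1)$.

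For the generic case $i-1 > \min(j-\mu_j-1,0)$, I would show that the east step emanating from the original starting point has weight zero, so that only paths beginning with a north step contribute, which amounts precisely to raising the starting point by one unit in $y$. Splitting into sub-cases: if $j-\mu_j-1\ge 0$, the starting point is $(\mu_j-j,0)$ and the east-step weight equals $-\alpha_{\mu_j-j+1}+\beta_0$, in which the index $\mu_j-j+1\le 0$ forces both terms to vanish by the convention $\alpha_m=\beta_m=0$ for $m\le 0$; if $j-\mu_j-1<0$, the starting point is $(\mu_j-j,j-\mu_j-1)$ and the east-step weight becomes $-\alpha_0+\beta_{j-\mu_j-1}$, which also vanishes term-by-term.

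The main obstacle, just as in Lemma~\ref{lem: h[A-B]}, is the degenerate boundary case $i-1 = \min(j-\mu_j-1,0)$. This forces $i=1$ and $j \ge \mu_j+1$, and then both the starting and ending $y$-coordinates equal $0$. For this case I would argue directly that both sides of the claimed identity vanish: the shifted path set $P_{\Gamma}((\mu_j-j,1),(\lambda_1-1,0))$ is empty because $\Gamma$ has no south steps, while the original path set $P_{\Gamma}((\mu_j-j,0),(\lambda_1-1,0))$ consists solely of pure east-step paths whose first step carries weight $-\alpha_{\mu_j-j+1}+\beta_0 = 0$, so each such path contributes zero. Hence the right-hand sides before and after the shift agree, completing the proof.

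An alternative path would be to deduce the lemma from Lemma~\ref{lem: h[A-B]} via the reflection $(x,y) \mapsto (-x-1,y)$ and the swap $\va \leftrightarrow \vb$ used to obtain Proposition~\ref{prop: hm[-A+B]} from Proposition~\ref{prop: hm[A-B]}; however, tracking the partition indices $\lambda,\mu$ and the shift of the starting $y$-coordinate through this transformation looks more cumbersome than running the parallel argument above, so I would prefer the direct approach.
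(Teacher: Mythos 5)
Your proof is essentially identical to the paper's: the published argument is a one-line appeal to Proposition~\ref{prop: hm[-A+B]} with \(r=\lambda_i-i\), \(s=i-1\), \(t=\mu_j-j\), deferring the shift of the starting point to the argument given for Lemma~\ref{lem: h[A-B]}, which is exactly the generic/degenerate split you spell out (including the vanishing of the initial east-step weight in both sub-cases). The one blemish is in your degenerate case: if the unique horizontal path is empty, i.e.\ \(\lambda_i-1=\mu_j-j\) (which together with \(i=1\) and \(\mu_j\le j-1\) forces \(\lambda_1=0\) and \(\mu_j=j-1\)), it contributes \(1\) rather than \(0\); this is a corner case in which the stated identity itself degenerates and which never occurs where the lemma is applied, so it is a defect inherited from the statement rather than from your argument.
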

\begin{proof}
  Similar to Lemma~\ref{lem: h[A-B]},
  the proof follows from Proposition~\ref{prop: hm[-A+B]} with \(r=\lambda_i-i\), \(s=i-1\), and \(t=\mu_j-j\). 
\end{proof}

We are now ready to prove Theorems~\ref{thm:C_lm} and \ref{thm:c_lm}. We will
only prove Theorem~\ref{thm:C_lm}, because Theorem~\ref{thm:c_lm} can be proved
similarly using Lemma~\ref{lem: h[-A+B]}.

\begin{proof}[Proof of Theorem~\ref{thm:C_lm}]
  Let \( \Gamma \) be the graph defined in Proposition~\ref{prop: hm[A-B]}
  and suppose \( \ell(\mu)=n \). For \( i= 1,\dots,n \), let 
  \[ 
     u_i = (\mu_i-i,\min(\mu_i-i+1,1)) \quad \mbox{and} \quad v_i = (\lambda_i-i,\lambda_i).
   \] 
  Note that each \( u_{i} \) (resp.~\( v_{i} \))
  is weakly south and strictly west of \( u_{i-1} \) (resp.~\( v_{i-1}\)).
  By \eqref{eq:C_lm}, Lemma~\ref{lem: h[A-B]}, and the
  Lindstr\"om--Gessel--Viennot lemma, we have
  \begin{equation} \label{eq:C=nonintersecting}
    C_{\lambda,\mu}(\va,\vb) =\det\left( \wt(P_\Gamma (u_i,v_j))\right)_{i,j=1}^n
    =\sum_{\vp\in X}\wt(\vp),
  \end{equation}
  where \( X \) is the set of all nonintersecting \( n \)-paths
  \(\vp=(p_1,\dots,p_n)\) with \(p_i\in P_\Gamma(u_i,v_i)\).

  We will construct a bijection \( \varphi:X\to \IET_\ZZ(\ml) \) by modifying
  the well-known bijection \cite[Theorem~7.16.1]{EC2} between nonintersecting
  lattice paths and (reverse) semistandard Young tableaux.

  First, we set \( \overline{u}_i = ( \mu_i-i,-\infty ) \) and \( \overline{v}_i
  = (\lambda_i-i,\infty) \) for \( i=1,\dots,n \). Here, in order to avoid
  dealing with infinite paths, one can think of \( -\infty \) and \( \infty \) as
  sufficiently small and large integers, respectively.
  Consider a nonintersecting \( n \)-path
  \( \overline{\vp}=(\overline{p}_1, \dots,\overline{p}_n) \), where \(
  \overline{p}_i\in P_{\Gamma}(\overline{u}_i,\overline{v}_i) \). Let \( T \) be
  the tableau of shape \( \ml \) such that the heights of the west steps in \(
  \overline{p}_i \) are the entries in the \( i \)th row of \( T \). More
  precisely, the \(y\)-coordinate of the \(j\)th west step in \(\overline{p}_i\)
  is the entry \( T(i,\mu_i+1-j) \) of the \( j \)th cell in the \( i \)th row
  of \( \ml \) from the right, see Figure~\ref{fig:paths=IET}. The map \(
  \overline{\vp}\mapsto T \) is essentially the same map in 
  \cite[Proof of Theorem~7.16.1]{EC2} with the lower bound 1 replaced by \( -\infty \)
  and the direction reversed. Thus \( \overline{\vp}\mapsto T \) is a bijection
  from the set of nonintersecting \( n \)-paths \(
  \overline{\vp}=(\overline{p}_1,\dots,\overline{p}_n) \), where \(
  \overline{p}_i\in P_{\Gamma}(\overline{u}_i,\overline{v}_i) \), to the set of
  fillings \( T \) of \( \ml \) with integers such that the entries in each row
  are weakly decreasing and the entries in each column are strictly decreasing.

  Now we restrict this map to the \( n \)-paths \( \overline{\vp} \) such that
  \( \overline{p}_i \) passes through \( u_i \) and \( v_i \) for all \(
  i=1,\dots,n \). Such an \( n \)-path \( \overline{\vp} \) can be identified
  with the \( n \)-path \( \vp=(p_1,\dots,p_n)\in X \), where \( p_i \) is the
  part of \( \overline{p}_i \) from \( u_i \) to \( v_i \).
  Moreover, \( \vp \) is nonintersecting if and only if \( \overline{\vp} \) is,
  because each \( u_i \) (resp.~\( v_i \)) is weakly south and strictly west of
  \( u_{i-1} \) (resp.~\( v_{i-1} \)). Since \( \overline{p}_i \) does not have a
  west step below \( u_i \) and above \( v_i \), if \( \overline{\vp}\mapsto T \),
  then \( \min(\mu_i-i,0)< T(i,j) \le \lambda_i \) for all \( (i,j)\in\ml \).
  Therefore the restriction of \( \overline{\vp}\mapsto T \) gives a desired
  bijection \( \varphi:X\to \IET_\ZZ(\ml) \).

  Let \( \vp=(p_1,\dots,p_n)\in X \) and \( \varphi(\vp)=T \). Since the west step \((a,b) \to
  (a-1,b)\) in \(p_i\) is the \( (\mu_i-i-a+1) \)st west step, it corresponds to
  the entry \( T(i,j)=b \), where \( j = \mu_i+1-(\mu_i-i-a+1)=i+a \), or
  equivalently, \(a=c(i,j)\). Therefore, under this correspondence, the weight
  of the west step \((a,b) \to (a-1,b)\) in \(p_i\) can be written as \(
  \alpha_{b}-\beta_{-a+b} = \alpha_{T(i,j)}-\beta_{T(i,j)-c(i,j)} \), and we
  obtain
\[
  \sum_{\vp\in X}\wt(\vp) = \sum_{T\in\IET_\ZZ(\ml)} \prod_{(i,j)\in\ml} (\alpha_{T(i,j)}-\beta_{T(i,j)-c(i,j)}).
\]
The theorem then follows from \eqref{eq:C=nonintersecting} and the above equation.
\end{proof}

\begin{figure}
  \centering
  
    \begin{tikzpicture}[scale=0.68, square/.style= {rectangle,draw=black!100,thin, minimum size= 6.8mm}]
      \draw[black!20] (-8,-5) grid (4,5);
      \draw[->, very thick, black!30] (-8,0) -- (4,0);
      \draw[->, very thick, black!30] (0,-5) -- (0,5);
      \draw[line width=1.5pt] 
      (3,1) \U \U \LStep{3}
      (2,1) \U\LStep{2}\LStep{2}
      (1,1) \LStep{1}\LStep{1} \U
      (-1,0) \LStep{0}\U\LStep{1} 
      (-2,-1) \LStep{-1}\U\LStep{0}\LStep{0} 
      (-3,-2) \LStep{-2}\LStep{-2}\U\LStep{-1}\U 
      (-5,-4) \U\LStep{-3}\U\LStep{-2}\U\U;
      \draw[dashed] (-6,-5) -- (0,1) -- (4,1);
      \draw[dotted, line width = 1pt] (3,1)--(3,-5.5);
      \node[below] at (3,-5.5) {\( \overline{u}_1 \)};
      \draw[dotted, line width = 1pt] (2,1)--(2,-5.5);
      \node[below] at (2,-5.5) {\( \overline{u}_2 \)};
      \draw[dotted, line width = 1pt] (1,1)--(1,-5.5);
      \node[below] at (1,-5.5) {\( \overline{u}_3 \)};
      \draw[dotted, line width = 1pt] (-1,0)--(-1,-5.5);
      \node[below] at (-1,-5.5) {\( \overline{u}_4 \)};
      \draw[dotted, line width = 1pt] (-2,-1)--(-2,-5.5);
      \node[below] at (-2,-5.5) {\( \overline{u}_5 \)};
      \draw[dotted, line width = 1pt] (-3,-2)--(-3,-5.5);
      \node[below] at (-3,-5.5) {\( \overline{u}_6 \)};
      \draw[dotted, line width = 1pt] (-5,-4)--(-5,-5.5);
      \node[below] at (-5,-5.5) {\( \overline{u}_7 \)};
      \draw[dotted, line width = 1pt] (2,3)--(2,5.5);
      \node[above] at (2,5.5) {\( \overline{v}_1 \)};
      \draw[dotted, line width = 1pt] (0,2)--(0,5.5);
      \node[above] at (0,5.5) {\( \overline{v}_2 \)};
      \draw[dotted, line width = 1pt] (-1,2)--(-1,5.5);
      \node[above] at (-1,5.5) {\( \overline{v}_3 \)};
      \draw[dotted, line width = 1pt] (-3,1)--(-3,5.5);
      \node[above] at (-3,5.5) {\( \overline{v}_4 \)};
      \draw[dotted, line width = 1pt] (-5,0)--(-5,5.5);
      \node[above] at (-5,5.5) {\( \overline{v}_5 \)};
      \draw[dotted, line width = 1pt] (-6,0)--(-6,5.5);
      \node[above] at (-6,5.5) {\( \overline{v}_6 \)};
      \draw[dotted, line width = 1pt] (-7,0)--(-7,5.5);
      \node[above] at (-7,5.5) {\( \overline{v}_7 \)};

      \filldraw[black] 
      (2,3) circle (3pt) node[above left] {\(v_1\)}
      (0,2) circle (3pt) node[above left] {\(v_2\)}
      (-1,2) circle (3pt) node[above left] {\(v_3\)}
      (-3,1) circle (3pt) node[above left] {\(v_4\)}
      (-5,0) circle (3pt) node[above left] {\(v_5\)}
      (-6,0) circle (3pt) node[above left] {\(v_6\)}
      (-7,0) circle (3pt) node[above left] {\(v_7\)}
      (3,1) circle (3pt) node[below right] {\(u_1\)}
      (2,1) circle (3pt) node[below right] {\(u_2\)}
      (1,1) circle (3pt) node[below right] {\(u_3\)}
      (-1,0) circle (3pt) node[below right] {\(u_4\)}
      (-2,-1) circle (3pt) node[below right] {\(u_5\)}
      (-3,-2) circle (3pt) node[below right] {\(u_6\)}
      (-5,-4) circle (3pt) node[below right] {\(u_7\)};
     
     \draw[<->, very thick] (4.5,-0.5)--(5.6,-0.5);

     \node[square] at (10,3) {\(3\)};
     \node[square] at (10,2) {\(2\)};
     \node[square] at (9,2) {\(2\)};
     \node[square] at (10,1) {\(1\)};
     \node[square] at (9,1) {\(1\)};
     \node[square] at (9,0) {\(0\)};
     \node[square] at (8,0) {\(1\)};
     \node[square] at (9,-1) {\(-1\)};
     \node[square] at (8,-1) {\(0\)};
     \node[square] at (7,-1) {\(0\)};
     \node[square] at (9,-2) {\(-2\)};
     \node[square] at (8,-2) {\(-2\)};
     \node[square] at (7,-2) {\(-1\)};
     \node[square] at (8,-3) {\(-3\)};
     \node[square] at (7,-3) {\(-2\)};

    \end{tikzpicture}
    \caption{A nonintersecting \( n \)-path \(
      \overline{\vp}=(\overline{p}_1,\dots,\overline{p}_n) \) with \(
      \overline{p}_i\in P_{\Gamma}(\overline{u}_i,\overline{v}_i) \), where
      \(u_i=(\mu_i-i,\min(\mu_i-i+1,1))\), \(v_i=(\lambda_i-i,\lambda_i)\),
      \(\mu=(4,4,4,3,3,3,2)\), \(\lambda=(3,2,2,1)\) and \( n=7 \), and the
      corresponding tableau \( T \). Since each \( \overline{p}_i \) passes
      through \(u_i=(\mu_i-i,\min(\mu_i-i+1,1))\) and
      \(v_i=(\lambda_i-i,\lambda_i)\), we have \( T\in\IET_\ZZ(\ml) \).}
  \label{fig:paths=IET}
\end{figure}

\section{Modified Jacobi--Trudi-like formulas} \label{sec:var JT}
In Section~\ref{sec:schur_expansion}, we gave Jacobi--Trudi-like formulas
(Theorem~\ref{thm:JT_ab}) for \( G_\lambda(\vx_n;\va,\vb) \) and \(
g_\lambda(\vx_n;\va,\vb) \). In this section, we present modifications of these
Jacobi--Trudi-like formulas. In later sections the modified formulas will give a connection between \(
G_\lambda(\vx_n;\va,\vb) \) (resp.~\( g_\lambda(\vx_n;\va,\vb) \)) and their
skew version  \( G_\lm(\vx;\va,\vb) \) (resp.~\( g_\lm(\vx;\va,\vb) \)).

We first prove a modification of the Jacobi--Trudi-like formula for \(
g_\lambda(\vx_n;\va,\vb) \) in Theorem~\ref{thm:JT_ab} because its proof
is simpler than that for \( G_\lambda(\vx_n;\va,\vb) \).

\begin{thm} \label{thm:det=det_g}
  For \( \lambda\in\Par_n \), we have
  \begin{equation}  \label{eq:det=det_g}
    g_\lambda(\vx_n;\va,\vb)
    = \det \left( h_{\lambda_i-i+j}[X_n-A_{\lambda_i-1}+B_{i-1}-B_{j-1}] \right)_{i,j=1}^n.
  \end{equation}
\end{thm}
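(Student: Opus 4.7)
The plan is to show that the determinant in \eqref{eq:det=det_g} equals the Jacobi--Trudi determinant \eqref{eq:JT_g} from Theorem~\ref{thm:JT_ab} by factoring its matrix as a product of two matrices, one of which is unipotent upper triangular (hence has determinant $1$).

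More precisely, I would use the addition rule $h_m[Z_1+Z_2]=\sum_{a+b=m} h_a[Z_1]h_b[Z_2]$ from Proposition~\ref{prop:h[-Z]} with $Z_1 = X_n-A_{\lambda_i-1}+B_{i-1}$ and $Z_2 = -B_{j-1}$ to write
\[
h_{\lambda_i-i+j}[X_n-A_{\lambda_i-1}+B_{i-1}-B_{j-1}]
= \sum_{l\ge 0} h_{\lambda_i-i+j-l}[X_n-A_{\lambda_i-1}+B_{i-1}]\,h_l[-B_{j-1}].
\]
After the reindexing $k=j-l$, this becomes
\[
\sum_{k} h_{\lambda_i-i+k}[X_n-A_{\lambda_i-1}+B_{i-1}]\,h_{j-k}[-B_{j-1}] = \sum_{k=1}^n N_{i,k}\, T_{k,j},
\]
where $N_{i,k} = h_{\lambda_i-i+k}[X_n-A_{\lambda_i-1}+B_{i-1}]$ is the $(i,k)$-entry of the Jacobi--Trudi matrix in \eqref{eq:JT_g}, and $T_{k,j} = h_{j-k}[-B_{j-1}]$. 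Thus the matrix in \eqref{eq:det=det_g} factors as $NT$.

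It remains to check that truncating the index $k$ to $\{1,\dots,n\}$ is harmless and that $T$ is unipotent upper triangular. For $k>j$ we have $T_{k,j}=h_{j-k}[-B_{j-1}]=0$ since the degree is negative, giving the upper-triangular shape. For $k=j$, $T_{j,j}=h_0[-B_{j-1}]=1$. Finally, terms with $l>j-1$ (equivalently $k<1$) vanish because $h_l[-B_{j-1}] = (-1)^l e_l[B_{j-1}] = (-1)^l e_l(\beta_1,\dots,\beta_{j-1}) = 0$ when $l\ge j$, so no contribution is lost by restricting $k$ to $\{1,\dots,n\}$. Therefore $\det T = 1$ and Theorem~\ref{thm:JT_ab} yields
\[
\det \left( h_{\lambda_i-i+j}[X_n-A_{\lambda_i-1}+B_{i-1}-B_{j-1}] \right)_{i,j=1}^n
= \det N \cdot \det T = g_\lambda(\vx_n;\va,\vb).
\]
The only real bookkeeping is tracking the vanishing ranges to justify the matrix factorization; there is no genuine obstacle here, as the argument is essentially a one-sided row/column operation on the Jacobi--Trudi matrix.
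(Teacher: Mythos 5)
Your proposal is correct and is essentially the paper's own argument: the paper expands the $(i,j)$-entry in exactly the same way via Proposition~\ref{prop:h[-Z]} and concludes by elementary column operations, which is the same as your factorization by a unipotent upper triangular matrix $T$. The vanishing checks ($h_l[-B_{j-1}]=0$ for $l\ge j$) match the paper's as well.
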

\begin{proof}
  Let \( H_{i,j}=h_{\lambda_i-i+j}[X_n-A_{\lambda_i-1}+B_{i-1}] \).
  By Theorem~\ref{thm:JT_ab}, it suffices to show that 
  \begin{equation}
    \label{eq:hh}
    \det\left(H_{i,j}\right)_{i,j=1}^n
    =\det \left( h_{\lambda_i-i+j}[X_n-A_{\lambda_i-1}+B_{i-1}-B_{j-1}] \right)_{i,j=1}^n.
  \end{equation}
The \( (i,j) \)-entry of the matrix on the right hand side of \eqref{eq:hh} can be rewritten as follows:
\[
  \sum_{k\ge 0} h_{\lambda_i-i+j-k}[X_n-A_{\lambda_i-1}+B_{i-1}] h_k[-B_{j-1}] 
  = H_{i,j} + \sum_{k=1}^{j-1} H_{i,j-k} h_k[-B_{j-1}],
\]
  where \( k \) is at most \( j-1 \) in the sum on the right because
  if \( k\ge j \), then \( h_k[-B_{j-1}] = (-1)^k e_k[B_{j-1}]=0 \).
  Therefore the matrix on the right hand side of \eqref{eq:hh} can be obtained
  from the matrix on the left hand side by elementary column operations, which shows
  \eqref{eq:hh}.
\end{proof}
 
Now we state a modification of the Jacobi--Trudi-like formula for \(
G_\lambda(\vx_n;\va,\vb) \) in Theorem~\ref{thm:JT_ab}.

\begin{thm} \label{thm:det=det_G}
  For \( \lambda\in\Par_n \), we have
  \begin{equation} \label{eq:det=det_G}
    G_\lambda(\vx_n;\va,\vb) = C_n \det \left( h_{\lambda_i-i+j}[X_n\ominus (A_{\lambda_i}-B_{i-1}+B_j)] \right)_{i,j=1}^n,
  \end{equation}
  where
  \[
    C_n = \prod_{i,j=1}^n (1-\beta_i x_j).
  \]
\end{thm}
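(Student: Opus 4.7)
The plan is as follows. First, I would pull a factor of \( C_n \) out of the bialternant formula by introducing a constant shift \( +B_n \) inside the plethystic substitution. The crucial identity is
\[
  h_{\lambda_i+n-i}[x_j\ominus(A_{\lambda_i}-B_{i-1})] = \prod_{k=1}^n(1-\beta_k x_j)\cdot h_{\lambda_i+n-i}[x_j\ominus(A_{\lambda_i}-B_{i-1}+B_n)],
\]
which follows from \( \sum_{b\ge 0} h_b[B_n]x^b = \prod_{k=1}^n(1-\beta_k x)^{-1} \) together with the multiplicativity of the generating function \( H(W;z):=\sum_{a\ge 0}h_a[W]z^a \) under plethystic addition. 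Pulling the column factors \( \prod_{k=1}^n(1-\beta_k x_j) \) out of the bialternant in \eqref{eq:defG'} gives
\[
  G_\lambda(\vx_n;\va,\vb) = C_n\cdot\frac{\det\left(h_{\lambda_i+n-i}[x_j\ominus(A_{\lambda_i}-B_{i-1}+B_n)]\right)_{i,j=1}^n}{\prod_{i<j}(x_i-x_j)}.
\]

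Next, I would apply Lemma~\ref{lem:C-B} in exactly the same way as in the proof of Theorem~\ref{thm:JT_ab}, with \( -B_{i-1} \) replaced by \( -B_{i-1}+B_n \) throughout, to convert this bialternant-style expression into a Jacobi--Trudi-style one:
\[
  G_\lambda(\vx_n;\va,\vb) = C_n\det\left(h_{\lambda_i-i+j}[X_n\ominus(A_{\lambda_i}-B_{i-1}+B_n)]\right)_{i,j=1}^n.
\]
Denote the matrix inside by \( M^{(n)} \).

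The third and hardest step is to convert from the constant shift \( +B_n \) to the column-dependent shift \( +B_j \) without changing the determinant. In contrast to the dual case treated in Theorem~\ref{thm:det=det_g}, adding \( +B_j \) inside \( \ominus \) corresponds to dividing by a polynomial, so a direct column operation on the original Jacobi--Trudi matrix of Theorem~\ref{thm:JT_ab} would require infinite sums. The key observation is that \emph{going backwards} from \( +B_n \) to \( +B_j \) corresponds instead to subtracting the finite expression \( B_n-B_j=\beta_{j+1}+\cdots+\beta_n \), which has only \( n-j \) variables. Expanding \( h_b[Z-W]=\sum_{b_1+b_2=b}h_{b_1}[Z](-1)^{b_2}e_{b_2}[W] \) inside the definition of \( \ominus \) yields the identity
\[
  h_k[Y\ominus(Z-W)] = \sum_{b\ge 0}(-1)^b e_b[W]\, h_{k+b}[Y\ominus Z],
\]
and applying it with \( Z=A_{\lambda_i}-B_{i-1}+B_n \) and \( W=B_n-B_j \) produces the \emph{finite} expansion
\[
  h_{\lambda_i-i+j}[X_n\ominus(A_{\lambda_i}-B_{i-1}+B_j)] = \sum_{b=0}^{n-j}(-1)^b e_b[B_n-B_j]\, M^{(n)}_{i,j+b}.
\]
This exhibits the target matrix as \( M^{(n)}T \), where \( T \) is the \( n\times n \) lower triangular matrix with entries \( T_{k,j}=(-1)^{k-j}e_{k-j}[B_n-B_j] \) for \( j\le k\le n \) and \( T_{k,j}=0 \) for \( k<j \). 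Since the diagonal entries \( T_{j,j}=e_0[B_n-B_j]=1 \) all equal one, we have \( \det T=1 \), so the determinants of \( M^{(n)} \) and of the target matrix are equal, completing the proof.
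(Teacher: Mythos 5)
Your proof is correct, and it takes a genuinely different route from the paper's. The paper starts from the Jacobi--Trudi formula of Theorem~\ref{thm:JT_ab} and compares Schur expansions of the two sides: the right-hand side is expanded via a second application of Lemma~\ref{lem:C-B} into $\sum_\mu C_{\lambda,\mu}(\va,\vb)\det(h_{\mu_i-i+j}[X_n\ominus B_j])$, and the inner determinant is then evaluated by Lemma~\ref{lem:hXB=nu_circ_mu}, whose proof in turn requires the auxiliary shape $\lambda\circ\mu$, reverse semistandard tableaux, Lemma~\ref{lem:jtb}, and the Cauchy identity. You instead introduce the constant shift $+B_n$ already at the bialternant level (where the factorization $h_m[x_j\ominus Z]=\prod_k(1-\beta_kx_j)\,h_m[x_j\ominus(Z+B_n)]$ is a clean column operation), rerun the Cauchy--Binet derivation of Theorem~\ref{thm:JT_ab} verbatim with the shifted alphabet, and then pass from the uniform shift $+B_n$ to the column-dependent shift $+B_j$ by right-multiplication with an explicit unitriangular matrix $T$; the finiteness of that correction, coming from $e_b[B_n-B_j]=0$ for $b>n-j$, is exactly the point that makes the column operations legitimate where a naive attempt starting from Theorem~\ref{thm:JT_ab} would involve infinite sums. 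Your argument is more self-contained (it avoids Lemmas~\ref{lem:jtb}, \ref{lem:lambda_mu=lambda_circ_mu}, and \ref{lem:hXB=nu_circ_mu} and the Cauchy identity entirely), while the paper's route produces Lemma~\ref{lem:hXB=nu_circ_mu} as a byproduct of independent interest and keeps the proof within the Schur-expansion framework used throughout Sections~\ref{sec:schur_expansion} and \ref{sec:var JT}. All three of your key identities — the generating-function factorization, the expansion $h_k[Y\ominus(Z-W)]=\sum_{b\ge0}(-1)^be_b[W]\,h_{k+b}[Y\ominus Z]$, and the unitriangularity of $T$ — check out.
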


To prove Theorem~\ref{thm:det=det_G}, we first show some auxiliary lemmas.

\begin{lem}\label{lem:jtb}
  For \( \lambda\in\Par_n \), we have
  \[
    s_\lambda(\vx_n) = \det(h_{\lambda_i-i+j}[X_{n}])_{i,j=1}^n
    = \det(h_{\lambda_i-i+j}[X_{n-j+1}])_{i,j=1}^n.
  \]
\end{lem}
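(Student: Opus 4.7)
The first equality is the classical Jacobi--Trudi formula for Schur functions, which we can simply cite (e.g.\ from \cite[Ch I, (5.4)]{Macdonald}). The nontrivial content is the second equality, and the plan is to transform the matrix $(h_{\lambda_i-i+j}[X_n])_{i,j=1}^n$ into $(h_{\lambda_i-i+j}[X_{n-j+1}])_{i,j=1}^n$ by a sequence of determinant-preserving column operations based on the one-line identity
\[
  h_m[X_N] - x_N\, h_{m-1}[X_N] = h_m[X_{N-1}],
\]
which follows by splitting the monomials in $h_m[X_N]$ according to whether or not the largest index equals $N$.

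Starting from $M^{(0)} := (h_{\lambda_i-i+j}[X_n])_{i,j=1}^n$, I would perform $n-1$ rounds of column operations. In the $k$th round ($k=1,\dots,n-1$), for $j$ running from $n$ down to $k+1$, replace column $j$ by (column $j$) $-\,x_{n-k+1}\cdot$ (column $j-1$). Processing the updates within a round in decreasing order of $j$ guarantees that when column $j$ is modified, column $j-1$ still has the form it had at the start of the round. Applying the displayed identity with $N = n-k+1$, each entry of the updated column $j$ collapses from $h_{\lambda_i-i+j}[X_{n-k+1}]$ to $h_{\lambda_i-i+j}[X_{n-k}]$.

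A straightforward induction on $k$ then shows that after the $k$th round, the current matrix has column $j$ equal to $(h_{\lambda_i-i+j}[X_{n-j+1}])_i$ for $1 \le j \le k$ and equal to $(h_{\lambda_i-i+j}[X_{n-k}])_i$ for $j \ge k+1$. The first class of columns is ``frozen'' because later rounds only touch columns of index $\ge k+2$. After the $(n-1)$st round the whole matrix is $(h_{\lambda_i-i+j}[X_{n-j+1}])_{i,j=1}^n$, and since column operations preserve determinants, the second equality follows. The only care required is to order the operations within each round correctly; this is bookkeeping rather than a genuine obstacle.
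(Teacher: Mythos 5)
Your proof is correct and follows essentially the same route as the paper: both establish the second equality by showing that the two matrices are related by determinant-preserving column operations. The only difference is organizational --- the paper writes each target column in one shot as the corresponding source column plus a combination of \emph{earlier} columns, via the expansion \( h_m[X_{n-j+1}] = \sum_{k=0}^{j-1} h_{m-k}[X_n]\, h_k[X_{n-j+1}-X_n] \) (the sum truncates because \( h_k[X_{n-j+1}-X_n]=(-1)^k e_k(x_{n-j+2},\dots,x_n)=0 \) for \( k\ge j \)), whereas you peel off one variable per round using \( h_m[X_N]-x_N h_{m-1}[X_N]=h_m[X_{N-1}] \); both are valid.
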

\begin{proof}
  The first equality is just the Jacobi--Trudi formula for Schur polynomials.
  To show the second equality, we use  a similar argument in the proof of
  Theorem~\ref{thm:det=det_g}.
  Let \( H_{i,j} =  h_{\lambda_i-i+j}[X_{n}] \).
  Then the \( (i,j) \)-entry of the matrix on the right hand side is
  \begin{align*}
    h_{\lambda_i-i+j}[X_{n-j+1}]
    &= h_{\lambda_i-i+j}[X_n+(X_{n-j+1}-X_n)] \\
    &= h_{\lambda_i-i+j}[X_n]
      + \sum_{k=1}^{j-1} h_{\lambda_i-i+j-k}[X_n] h_k[X_{n-j+1}-X_n] \\
    &= H_{i,j}
      + \sum_{k=1}^{j-1} H_{i,j-k} h_k[X_{n-j+1}-X_n],
  \end{align*}
  by a similar reason in the proof of Theorem~\ref{thm:det=det_g}.
  Hence, using elementary column operations we have the desired result.
\end{proof}

A \emph{reverse semistandard Young tableau} \( T \) of shape \( \lambda \) is
a filling of \( \lambda \) with positive integers such that
each row of \( T \) is weakly decreasing from left to right,
and each column of \( T \) is strictly decreasing from top to bottom.
Let \( \SSYT_n(\lambda) \) (resp.~\( \RSSYT_n(\lambda) \))
be the set of semistandard Young tableaux
(resp.~reverse semistandard Young tableaux) of
shape \( \lambda \) whose entries are at most \( n \). It is well known that 
\[
  s_\lambda(\vx_n) = \sum_{T\in\SSYT_n(\lambda)} \vx^T = \sum_{T\in\RSSYT_n(\lambda)} \vx^T.
\]

For \( \lambda,\mu\in \Par_n \), we define
\[
  \lambda\circ\mu := (\lambda_1+\mu_1,\lambda_1+\mu_2,\dots,\lambda_1+\mu_n)
  / (\lambda_1-\lambda_n,\lambda_1-\lambda_{n-1},\dots, \lambda_1-\lambda_1).
\]
In other words, \( \lambda\circ\mu \) is the skew shape obtained by 
rotating \( \lambda \) by 180 degrees and then attaching \( \mu \) to its right.
For example, if \( \lambda = (3,1,0), \mu=(4,2,2)\in\Par_3 \), then
\ytableausetup{smalltableaux}
\[
  \lambda\circ\mu = (7,5,5)/(3,2,0) = \vcenter{\hbox{\ydiagram{3+4,2+3,5}}}.
\]

\begin{lem}[{\cite[Exercise~7.34]{EC2}}] \label{lem:lambda_mu=lambda_circ_mu}
  For \( \lambda,\mu\in\Par_n \), we have
  \[
    s_\lambda(\vx_n) s_\mu(\vx_n) = s_{\lambda\circ\mu}(\vx_n).
  \]
\end{lem}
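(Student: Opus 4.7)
The plan is to prove the identity bijectively by establishing a weight-preserving bijection
\[
  \RSSYT_n(\lambda) \times \SSYT_n(\mu) \longleftrightarrow \SSYT_n(\lambda\circ\mu),
\]
which, combined with the identities $s_\lambda(\vx_n)=\sum_{U\in\RSSYT_n(\lambda)}\vx^U$ and $s_\mu(\vx_n)=\sum_{V\in\SSYT_n(\mu)}\vx^V$, immediately yields the lemma.

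Given a pair $(U,V)$, I construct $T$ of shape $\lambda\circ\mu$ by placing the 180-degree rotation of $U$ into the left region (columns $1,\dots,\lambda_1$, which is exactly the rotated-$\lambda$ portion of the skew shape) and placing $V$ into the right region (columns $\lambda_1+1,\dots,\lambda_1+\mu_1$), shifted horizontally by $\lambda_1$. Rotation by $180^\circ$ turns weakly-decreasing rows of the reverse SSYT $U$ into weakly-increasing rows and strictly-decreasing columns into strictly-increasing columns, so the rotated $U$ piece satisfies the semistandard conditions internally; the $V$ piece does so by hypothesis. Because the two pieces occupy disjoint columns, there are no column constraints linking them, and only the row constraints across the seam between columns $\lambda_1$ and $\lambda_1+1$ remain to be verified.

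The key step is to check this seam constraint: when both $\lambda_{n+1-i}\ge 1$ and $\mu_i\ge 1$, we must have $T(i,\lambda_1)\le T(i,\lambda_1+1)$, i.e., $U(n+1-i,1)\le V(i,1)$. Since column $1$ of the reverse SSYT $U$ is strictly decreasing with entries in $[n]$, we get $U(n+1-i,1)\le n-(n+1-i)+1=i$; since column $1$ of the SSYT $V$ is strictly increasing with entries in $[n]$, we get $V(i,1)\ge i$. Hence $U(n+1-i,1)\le i\le V(i,1)$, so the seam constraint holds automatically. The inverse map splits any $T\in\SSYT_n(\lambda\circ\mu)$ into its left and right pieces and un-rotates the left one; the SSYT conditions on $T$ transfer back to the required conditions on $U$ and $V$. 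Weight preservation is immediate, because the multiset of entries of $T$ is the disjoint union of those of $U$ and $V$. The only non-routine check is the seam inequality above, and it is forced purely by the column-strictness built into the two tableau classes.
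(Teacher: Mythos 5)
Your proof is correct and follows essentially the same route as the paper: the same bijection attaching the $180^\circ$ rotation of the reverse SSYT to the left of the SSYT, with semistandardness at the seam justified by the bounds $U(n+1-i,1)\le i\le V(i,1)$ (the paper phrases these as ``entries in the $i$th row of a reverse SSYT are at most $n-i+1$'' and ``entries in the $i$th row of an SSYT are at least $i$''). Nothing is missing.
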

\begin{proof}
  We prove the lemma by giving a weight-preserving bijection \( \Phi \) between
  \( \RSSYT_n(\lambda)\times\SSYT_n(\mu) \) and \( \SSYT_n(\lambda\circ\mu) \).
  For \( (T_1, T_2)\in\RSSYT_n(\lambda)\times\SSYT_n(\mu) \),
  define \( \Phi(T_1,T_2) \) to be the tableau of shape \( \lambda\circ\mu \)
  obtained from \( T_2 \) by attaching the 180 degrees rotation of \( T_1 \).
  For example, let  \( \lambda = (3,1,0), \mu=(4,2,2)\in\Par_3 \),
  then \( \Phi \) maps
  \[
    \left(
      \vcenter{\hbox{
      \begin{ytableau}
      3 & 3 & 2 \\
      1
      \end{ytableau}}},
      \vcenter{\hbox{
      \begin{ytableau}
        1 & 1 & 2 & 3 \\
        2 & 2 \\
        3 & 3
      \end{ytableau}}}
    ~\right) \mapsto
    \vcenter{\hbox{
      \begin{ytableau}
        \none & \none & \none & 1 & 1 & 2 & 3 \\
        \none & \none & 1 & 2 & 2 \\
        2 & 3 & 3 & 3 & 3
      \end{ytableau}}}.
  \]
  By definition, for \( T\in\SSYT_n(\mu) \), the entries in the \( i \)th row are at least \( i \).
  Similarly, for \( T\in\RSSYT_n(\lambda) \), the entries in the \( i \)th row are at most \( n-i+1 \).
  From these facts, \( \Phi(T_1,T_2) \) belongs to \( \SSYT_n(\lambda\circ\mu) \).
  It is straightforward to check that \( \Phi \) is bijective and weight-preserving,
  i.e., for \( (T_1,T_2)\in \RSSYT_n(\lambda)\times\SSYT_n(\mu) \), \( \vx^{T_1} \vx^{T_2} = \vx^{\Phi(T_1,T_2)} \).
\end{proof}

\begin{lem} \label{lem:hXB=nu_circ_mu}
  For \( \mu\in\Par_n \),
  \[
    \det (h_{\mu_i-i+j}[X_n\ominus B_j])_{i,j=1}^n =
    s_\mu(\vx_n) \prod_{i,j=1}^n \frac{1}{1-\beta_i x_j}.
  \]
\end{lem}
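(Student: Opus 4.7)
The plan is to evaluate the determinant in three moves: replace each column's argument $B_j$ with the common $B_n$, apply the Cauchy--Binet theorem (Lemma~\ref{lem:C-B}), and reassemble the resulting sum via Cauchy's identity.

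First, I would establish the column-stabilization identity
\[
  h_k[X_n\ominus B_j]
  = \sum_{m=0}^{n-j} (-1)^m e_m(\beta_{j+1},\ldots,\beta_n)\, h_{k+m}[X_n\ominus B_n],
\]
obtained by inverting the decomposition $h_k[X_n\ominus B_n] = \sum_{m\ge 0} h_{k+m}[X_n\ominus B_j]\,h_m(\beta_{j+1},\ldots,\beta_n)$ at the level of generating series in $t$, using that $H(B_n;1/t) = H(B_j;1/t)\prod_{l=j+1}^n(1-\beta_l/t)^{-1}$. Applied entrywise, this realizes each column of $M := (h_{\mu_i-i+j}[X_n\ominus B_j])_{i,j=1}^n$ as an upper-unitriangular combination of the columns of $\tilde M := (h_{\mu_i-i+j}[X_n\ominus B_n])_{i,j=1}^n$, so $\det M = \det\tilde M$.

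Next I would factor $\tilde M = PQ$ by expanding $h_{\mu_i-i+j}[X_n\ominus B_n] = \sum_{l\ge 0} h_{\mu_i-i+l}[X_n]\,h_{l-j}[B_n]$ with $P_{i,l} = h_{\mu_i-i+l}[X_n]$ and $Q_{l,j} = h_{l-j}[B_n]$. Lemma~\ref{lem:C-B} gives $\det\tilde M = \sum_{\nu\in\Par_n} \det P^\nu\,\det Q_\nu$. A column reversal on $Q_\nu$ together with the substitution $\tilde\nu = \nu-(1^n)$ and Jacobi--Trudi shows $\det Q_\nu = (-1)^{n(n-1)/2} s_{\nu-(1^n)}(\beta_1,\ldots,\beta_n)$, vanishing unless $\nu_n\ge 1$. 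Writing each surviving $\nu$ as $\rho+(1^n)$ with $\rho\in\Par_n$, the corresponding $\det P^\nu$ equals $\det(h_{\mu_i+\rho_j+n+1-i-j}[X_n])_{i,j}$; a transpose followed by a column reversal identifies this, up to sign, with the skew Jacobi--Trudi matrix of $\mu\circ\rho$, so $\det P^\nu = (-1)^{n(n-1)/2} s_{\mu\circ\rho}(\vx_n) = (-1)^{n(n-1)/2} s_\mu(\vx_n)s_\rho(\vx_n)$ by Lemma~\ref{lem:lambda_mu=lambda_circ_mu}.

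The two sign factors $(-1)^{n(n-1)/2}$ cancel, and Cauchy's identity gives
\[
  \det M = s_\mu(\vx_n) \sum_{\rho\in\Par_n} s_\rho(\vx_n)\,s_\rho(\beta_1,\ldots,\beta_n)
  = s_\mu(\vx_n) \prod_{i,j=1}^n \frac{1}{1-\beta_i x_j},
\]
which is the desired identity. The main obstacle is the bookkeeping in the third step, specifically the identification of $\det P^\nu$ with the Jacobi--Trudi determinant of $\mu\circ\rho$: one must line up the shift $n+1-i-j$ with the content structure of the skew shape $\mu\circ\rho$ via a transpose and a column reversal, and verify that the two signs $(-1)^{n(n-1)/2}$ coming from the reversals (one on $P^\nu$, one on $Q_\nu$) multiply to $1$ so that the resulting sum is clean.
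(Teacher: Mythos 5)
Your proof is correct, and it follows the same overall architecture as the paper's (Cauchy--Binet, the $\circ$-product Lemma~\ref{lem:lambda_mu=lambda_circ_mu}, then the Cauchy identity), but it handles the column-dependent arguments $B_j$ by a genuinely different mechanism. The paper keeps the $B_j$ in place: it factors the original matrix as $PQ$ with $P=(h_{\mu_i-i+j+1}[X_n])$ and $Q=(h_{i+1-j}[B_j])$, so that the $j$-dependence of the plethystic argument survives into $\det Q_\nu$, which is then identified with $s_\nu(\beta_1,\dots,\beta_n)$ via the column-flagged Jacobi--Trudi variant of Lemma~\ref{lem:jtb}; with that indexing the Cauchy--Binet sum runs directly over all $\nu\in\Par_n$ with no reindexing. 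You instead eliminate the $j$-dependence up front, by inverting the relation $h_k[X_n\ominus B_n]=\sum_{m\ge0}h_m(\beta_{j+1},\dots,\beta_n)h_{k+m}[X_n\ominus B_j]$ to express each column of $M$ as a unitriangular combination of columns of $\tilde M$, and only then apply Cauchy--Binet to the constant-argument matrix $\tilde M$. The price is the shift: $\det Q_\nu$ vanishes unless $\nu_n\ge1$, and you must substitute $\nu=\rho+(1^n)$ before Cauchy's identity applies — all of which you carry out correctly, including the identification of $\det P^\nu$ with $s_{\mu\circ\rho}(\vx_n)$ and the cancellation of the two reversal signs. Your route buys independence from Lemma~\ref{lem:jtb} (your stabilization identity is essentially that lemma's content, transplanted to column operations on $M$ itself), while the paper's indexing with the built-in $+1$ shift buys a cleaner Cauchy--Binet sum with no reindexing; the two are of comparable length and difficulty.
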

\begin{proof}
  By definition, for \( 1\le i,j\le n \), we have
  \[
    h_{\mu_i-i+j}[X_n\ominus B_j] =
    \sum_{k\in \ZZ} h_{\mu_i-i+k+1}[X_n] h_{k+1-j}[B_j]
    =\sum_{k\ge0} h_{\mu_i-i+k+1}[X_n] h_{k+1-j}[B_j],
  \]
  which is the \( (i,j) \)-entry of the matrix \( PQ \), where
  \[
    P=(h_{\mu_i-i+j+1}[X_n])_{i\in[n],j\in \NN},
    \qquad Q=(h_{i+1-j}[B_j])_{i\in\NN,j\in [n]}.
  \]
  Thus, by Lemma~\ref{lem:C-B},
  \[
    \det (h_{\mu_i-i+j}[X_n\ominus B_j])_{i,j=1}^n = \det (PQ)
    =\sum_{\nu\in\Par_n} \det P^\nu \det Q_\nu.
  \]
  Observe that
  \begin{align*}
    \det P^\nu
    &= \det \left( h_{\mu_i-i+\nu_{j}+n-j+1}[X_n] \right)_{i,j=1}^n
    =(-1)^{n(n-1)/2} \det \left( h_{\mu_i-i+\nu_{n-j+1}+j}[X_n] \right)_{i,j=1}^n\\
    &= (-1)^{n(n-1)/2} s_{\nu\circ\mu}(\vx_n), \\
    \det Q_\nu
    &= \det(h_{\nu_i+n-i+1-j}[B_j])_{i,j=1}^n
    =(-1)^{n(n-1)/2} \det(h_{\nu_{i}-i+j}[B_{n-j+1}])_{i,j=1}^n\\
    &=(-1)^{n(n-1)/2} s_{\nu}(\vb_n),
  \end{align*}
  where \( \vb_n=(\beta_1,\dots,\beta_n) \), and the last equality follows from
  Lemma~\ref{lem:jtb}.
  Combining the above equations with Lemma~\ref{lem:lambda_mu=lambda_circ_mu},
  we obtain
  \[
    \det (h_{\mu_i-i+j}[X_n\ominus B_j])_{i,j=1}^n
      = \sum_{\nu\in\Par_n} s_{\nu\circ\mu}(\vx_n) s_\nu(\vb_n)
      = s_\mu(\vx_n) \sum_{\nu\in\Par_n} s_\nu(\vx_n) s_\nu(\vb_n).
  \]
  Therefore, the Cauchy identity
  \[
    \sum_{\nu\in\Par_n} s_\nu(\vx_n) s_\nu(\vb_n)
      = \prod_{i,j=1}^n \frac{1}{1-\beta_i x_j}
  \]
  completes the proof.
\end{proof}

Finally, we are ready to prove Theorem~\ref{thm:det=det_G}.

\begin{proof}[Proof of Theorem~\ref{thm:det=det_G}]
  Observe that for any \( m\in\ZZ \) and formal power series \( X,Y,Z \) without
  constant terms,
\[
  h_{m}[X\ominus(Y+Z)]
  =\sum_{a,b\ge 0} h_{m+a+b}[X] h_{a}[Y] h_{b}[Z]= h_{m}[(X\ominus Y)\ominus Z] .
\]
Thus the theorem can be restated as
\begin{equation}\label{eq:C_ndet}
  G_\lambda(\vx_n;\va,\vb) = C_n \det \left( h_{\lambda_i-i+j}[(X_n\ominus B_{j})\ominus (A_{\lambda_{i}}-B_{i-1})] \right)_{i,j=1}^n.
\end{equation} 

The transpose of the matrix in \eqref{eq:C_ndet} has \( (i,j) \)-entry
\begin{align}
  h_{\lambda_j-j+i}[(X_n\ominus B_{i})\ominus (A_{\lambda_{j}}-B_{j-1})] 
  &= \sum_{k\in\ZZ} h_{k}[X_n\ominus B_{i}] h_{k-(\lambda_j-j+i)}[A_{\lambda_{j}}-B_{j-1}] \nonumber\\
  &= \sum_{k\in\ZZ} h_{k-n+i}[X_n\ominus B_{i}] h_{k-(\lambda_j+n-j)}[A_{\lambda_{j}}-B_{j-1}]. \label{eq:shifted_CB}
\end{align}
Note that if \( k<0 \), then \( h_{k-(\lambda_j+n-j)}[A_{\lambda_{j}}-B_{j-1}]=0
\) because \( k-(\lambda_j+n-j)<0 \). Thus, \eqref{eq:shifted_CB} is equal to
the \( (i,j) \)-entry of the matrix \( PQ \), where
\[
  P=(h_{j-n+i}[X_n\ominus B_{i}])_{i\in [n],j\in\NN}, \quad
Q=(h_{i-(\lambda_j+n-j)}[A_{\lambda_j}-B_{j-1}])_{i\in \NN, j\in[n]}.
\]
Then, by Lemma~\ref{lem:C-B}, \eqref{eq:C_lm}, and
Lemma~\ref{lem:hXB=nu_circ_mu}, the right hand side of \eqref{eq:C_ndet} is
equal to
\begin{align*}
  & C_n\det \left( PQ \right) = C_n \sum_{\mu \in \Par_n} (\det P^{\mu}) (\det Q_{\mu})\\
  &= C_n\sum_{\mu\in\Par_n}
    \det (h_{\mu_j+i-j}[X_n\ominus B_i])_{i,j=1}^n
    \det (h_{\mu_i-\lambda_j-i+j}[A_{\lambda_j}-B_{j-1}])_{i,j=1}^n\\
  &= C_n \sum_{\mu\in\Par_n} C_{\lambda,\mu}(\va,\vb)
    \det (h_{\mu_j+i-j}[X_n\ominus B_i])_{i,j=1}^n\\
  &=  \sum_{\mu\in\Par_n} C_{\lambda,\mu}(\va,\vb) s_\mu(\vx_n),
\end{align*}
which is equal to the left hand side of \eqref{eq:C_ndet} by
\eqref{eq:G_n=s*det}. This completes the proof.
\end{proof}

\section{Schur expansions and the omega involution for skew shapes}
\label{sec:omega_involution}

In this section, similar to Section \ref{sec:schur_expansion}, we expand
\(G_\lm(\vx;\va,\vb)\) and \(g_\lm(\vx;\va,\vb)\) in terms of skew Schur
functions using Jacobi--Trudi-like formulas and the Cauchy--Binet theorem. As a
consequence we show that \( G_{\lm}(\vx;\va,\vb) \) and \(
g_\lm(\vx;\va,\vb) \) behave nicely under the involution \( \omega \). One
of our results (Theorem~\ref{thm:G=DSD}) in this section generalizes the
result of Chan and Pflueger \cite{CP21:grothendieck} on skew Schur expansions
of \( RG_\lm(\vx;\vb)=G_\lm(\vx;\bm0,\vb) \).

As mentioned in the introduction, one can define
\( G_\lm(\vx;\va,\vb) \) and \( g_\lm(\vx;\va,\vb) \) using
Theorem~\ref{thm:comb_intro}. In our companion paper
\cite{Hwang-preprint} we prove the following Jacobi--Trudi-like
formulas for \( G_\lm(\vx;\va,\vb) \) and \( g_\lm(\vx;\va,\vb) \).

\begin{thm}[{\cite{Hwang-preprint}}]
  \label{thm:skew_Gg_JT_formula}
  For \(\lambda,\mu\in \Par_n\), we have
  \begin{align}
    \label{eq:JT skew G} G_\lm(\vx;\va,\vb) 
      &= C \det\left(   h_{\lambda_i-\mu_j-i+j}
        [X\ominus(A_{\lambda_i}-A_{\mu_j}-B_{i-1} + B_{j})]\right)_{i,j=1}^n,\\
    \label{eq:dualJT skew G}  G_{\lambda'/\mu'}(\vx;\va,\vb) 
      &= D \det\left(   e_{\lambda_i-\mu_j-i+j}
      [X\ominus(A_{i-1}-A_{j}-B_{\lambda_i} + B_{\mu_j})]\right)_{i,j=1}^n,\\
    \label{eq:JT skew g}  g_\lm(\vx;\va,\vb)
      &= \det\left(   h_{\lambda_i-\mu_j-i+j}
        [X-A_{\lambda_i-1}+A_{\mu_j}+B_{i-1} - B_{j-1}]\right)_{i,j=1}^n,\\
    \label{eq:dualJT skew g}  g_{\lambda'/\mu'}(\vx;\va,\vb)
      &= \det\left(   e_{\lambda_i-\mu_j-i+j}
        [X-A_{i-1}+A_{j-1}+B_{\lambda_i-1} - B_{\mu_j}]\right)_{i,j=1}^n,
  \end{align}
  where
    \[
      C = \prod_{i=1}^{n} \prod_{l=1}^{\infty} (1-\beta_i x_l) \mbox{ and } 
      D = \prod_{i=1}^{n} \prod_{l=1}^{\infty} (1-\alpha_i x_l)^{-1}.
    \]
\end{thm}

In this paper, instead of introducing the combinatorial models in
Theorem~\ref{thm:comb_intro}, we will consider the formulas in
Theorem~\ref{thm:skew_Gg_JT_formula} as the definitions of
\( G_\lm(\vx;\va,\vb) \), \( G_{\lambda'/\mu'}(\vx;\va,\vb) \),
\( g_\lm(\vx;\va,\vb) \), and \( g_{\lambda'/\mu'}(\vx;\va,\vb) \).
These definitions are compatible with each other in the sense that
\( G_\lm(\vx;\va,\vb) = G_{(\lambda')'/(\mu')'}(\vx;\va,\vb) \) and
\( g_\lm(\vx;\va,\vb) = g_{(\lambda')'/(\mu')'}(\vx;\va,\vb) \), which
follows from the results of \cite{Hwang-preprint}. However, in this
paper, we will simply regard \( \lambda'/\mu' \) as a symbol so that
we do not have to check the well-definedness of these definitions. In
fact, the arguments in this section only need the right-hand sides of
\eqref{eq:JT skew G}, \eqref{eq:dualJT skew G}, \eqref{eq:JT skew g},
and \eqref{eq:dualJT skew g}.

  We define a \emph{generalized partition} of length \(n\) to be a sequence
  \((\lambda_1,\dots,\lambda_n)\in\ZZ^n\) satisfying \(\lambda_1\geq \dots\geq
  \lambda_n\). Denote by \( \GPar_n \) the set of generalized partitions of
  length \( n \). For \( \lambda,\mu\in\GPar_n \), we write \(
  \mu\subseteq\lambda \) if \( \mu_i\le \lambda_i \) for \( i=1,\dots,n \).
  Observe that this notation is consistent with the Young diagram inclusion for
  partitions. For \( \lambda,\mu\in\GPar_n \) with \( \mu\subseteq\lambda \),
  the \emph{generalized skew shape} \( \lambda/\mu \) is defined to be the set
  \( \{(i,j)\in\ZZ^2: 1\le i\le n~\mbox{and}~\mu_i<j\le \lambda_i\} \).
  Each element \( (i,j)\in\lm \) is called a \emph{cell} of \( \lm \) and
  \( |\lm| \) denotes the number of cells in \( \lm \). Similar to Young
  diagrams, we visualize \( \lm \) as an array of squares by placing a square in
  row \( i \) and column \( j \) for each cell \( (i,j)\in\lm \), where we may
  have columns with negative indices. Thus we can define a tableau of shape \(
  \lm \) as a filling of the cells in \( \lm \) as usual. For each cell \( (i,j)\in\lm
  \), we define \( c(i,j):=j-i \).
  We consider a partition \( \lambda\in \Par_n \) as an element of \( \GPar_n \).
  Then the above definitions on generalized partitions are consistent with those
  on partitions.
  
  We extend the definition of skew Schur functions \(
  s_{\lambda/\mu}(\vx) \) and \( 
  s_{\lambda'/\mu'}(\vx) \) for \( \lambda,\mu\in\GPar_n \) by
  the Jacobi--Trudi formulas:
  \[
    s_{\lambda/\mu}(\vx)
    = \det \left( h_{\lambda_i-\mu_j-i+j}(\vx) \right)_{i,j=1}^n, \qquad
    s_{\lambda'/\mu'}(\vx)
    = \det \left( e_{\lambda_i-\mu_j-i+j}(\vx) \right)_{i,j=1}^n.
  \]
  (As before, we regard \( \lambda'/\mu' \) as a symbol, and
  \( \lambda' \) and \( \mu' \) will not be considered individually.
  Hence, again, it is not necessary to check the well-definedness of
  the above definitions of \( s_{\lambda/\mu}(\vx) \) and
  \( s_{\lambda'/\mu'}(\vx) \) for \( \lambda,\mu\in\GPar_n \).)
  Clearly, this definition gives the usual skew Schur functions if
  \( \lambda,\mu\in\Par_n \). By Lemma~\ref{lem:det(h)=0} (with
  \( \GPar_n \) in place of \( \Par_n \)), we have
  \( s_{\lambda/\mu}(\vx)=s_{\lambda'/\mu'}(\vx)=0 \) unless
  \( \mu\subseteq\lambda \). Moreover, if \( \mu\subseteq\lambda \),
  then \( \lambda-\mu_n,\mu-\mu_n\in\Par_n \), where
  \( \lambda-k=(\lambda_1-k,\lambda_2-k,\dots,\lambda_n-k) \), and
  \[
    s_{\lambda/\mu}(\vx) = s_{(\lambda-\mu_n)/(\mu-\mu_n)}(\vx), \qquad
    s_{\lambda'/\mu'}(\vx) = s_{(\lambda-\mu_n)'/(\mu-\mu_n)'}(\vx).
  \]
  Therefore, \( s_\lm(\vx) \) is equal to a usual skew Schur function for any \( \lambda,\mu\in\GPar_n \).

  We extend the notation in Section~\ref{sec:schur_expansion} as follows.
  For \(\lambda,\mu\in\GPar_n\), \(P=(p_{i,j})_{i\in [n], j\in\ZZ}\),
  \(Q=(q_{i,j})_{i\in\ZZ,j\in [n]}\), and \(R=(r_{i,j})_{i,j\in\ZZ}\), we define
  \begin{align*}
    P^{\lambda}&:=(p_{i,\lambda_j+n-j})_{1\le i,j \le n}, \\
    Q_{\lambda}&:=(q_{\lambda_i+n-i,j})_{1\le i,j \le n}, \\
    R_\lambda^\mu&:=(r_{\lambda_i+n-i,\mu_j+n-j})_{1\le i,j \le n}.
  \end{align*}

  The following lemma can be proved similarly as Lemma~\ref{lem:C-B}.
  
  \begin{lem}[Cauchy--Binet theorem for generalized partitions]
    \label{lem: C-B for GPar}
    Suppose that
    \(P=(p_{i,j})_{i\in [n],j\in \ZZ}\) and \(Q=(q_{i,j})_{i\in \ZZ,j\in [n]}\)
    are matrices such that each entry in \( PQ \) is well defined as a series.
    Then we have
    \[
      \det \left( P Q \right) = \sum_{\nu\in\GPar_n} (\det P^{\nu}) (\det Q_{\nu}).
    \]
  \end{lem}

  Applying Lemma~\ref{lem: C-B for GPar} twice and using the fact
  \( (QR)_\nu = Q_\nu R \), we obtain the following corollary.

  \begin{cor}
    \label{cor:cor of CB}
    Suppose that
    \(P=(p_{i,j})_{i\in [n],j\in \ZZ}\), \(Q=(q_{i,j})_{i,j\in \ZZ}\)
    and \( R=(r_{i,j})_{i\in\ZZ, j\in [n]} \)
    are matrices such that each entry in \( PQ, QR \) and \( PQR \) is well defined
    as a series. Then we have
    \begin{align*}
      \det \left( P Q R \right) = \sum_{\nu,\rho\in\GPar_n} (\det P^{\nu}) (\det Q_{\nu}^{\rho}) (\det R_{\rho}).
    \end{align*}
  \end{cor}
  
  We are now ready to expand \( G_\lm(\vx;\va,\vb) \) and \( G_{\lambda'/\mu'}(\vx;\va,\vb)
  \) in terms of skew Schur functions up to some factor.

  \begin{thm} \label{thm:G=CsC}
    For \(\lambda,\mu\in \Par_n\) with \( \mu\subseteq\lambda \), we have
    \begin{align} \label{eq:G=CsC}
      G_\lm(\vx;\va,\vb)= C
      \sum_{\substack{\rho,\nu\in \GPar_n \\ \rho\subseteq\mu\subseteq\lambda\subseteq\nu}}
      C_{\lambda,\nu}(\va,\vb) s_{\nu/\rho}(\vx) C'_{\rho,\mu}(\va,\vb),
    \end{align}
    where 
    \begin{align*}
      C &= \prod_{i=1}^{n} \prod_{l=1}^{\infty} (1-\beta_i x_l), \\
      C_{\lambda,\nu}(\va,\vb) &= \det \left( h_{\nu_i-\lambda_j-i+j}[A_{\lambda_j}-B_{j-1}] \right)_{i,j=1}^n, \\
      C'_{\rho,\mu}(\va,\vb) &= \det \left( h_{\mu_i-\rho_j-i+j}[-A_{\mu_i}+B_i] \right)_{i,j=1}^n.
    \end{align*}
  \end{thm}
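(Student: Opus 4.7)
\textbf{Proof proposal for Theorem~\ref{thm:G=CsC}.}

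My plan is to start from the Jacobi--Trudi formula \eqref{eq:JT skew G} and factor the matrix inside the determinant into a product of three matrices, then apply the Cauchy--Binet theorem for generalized partitions (Lemma~\ref{lem: C-B for GPar}) twice.

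First, I will unpack the $(i,j)$-entry of the matrix in \eqref{eq:JT skew G}. Writing out the definition of $\ominus$ and then expanding $h_b[(A_{\lambda_i}-B_{i-1}) + (-A_{\mu_j}+B_j)]$ using Proposition~\ref{prop:h[-Z]}, I obtain
\[
h_{\lambda_i-\mu_j-i+j}[X\ominus(A_{\lambda_i}-A_{\mu_j}-B_{i-1}+B_j)]
= \sum_{a-p-q=\lambda_i-\mu_j-i+j} h_a[X]\, h_p[A_{\lambda_i}-B_{i-1}]\, h_q[-A_{\mu_j}+B_j].
\]
With the substitution $k = p+\lambda_i+n-i$ and $l = \mu_j+n-j-q$ (so that $a = k-l$), this triple sum can be rewritten as the $(i,j)$-entry of the triple matrix product $PMR$ where
\begin{align*}
P &= \bigl(h_{k-(\lambda_i+n-i)}[A_{\lambda_i}-B_{i-1}]\bigr)_{i\in[n],\,k\in\ZZ},\\
M &= \bigl(h_{k-l}[X]\bigr)_{k,l\in\ZZ},\\
R &= \bigl(h_{(\mu_j+n-j)-l}[-A_{\mu_j}+B_j]\bigr)_{l\in\ZZ,\,j\in[n]}.
\end{align*}

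Next, I will apply Lemma~\ref{lem: C-B for GPar} twice to compute $\det(PMR)$. Writing $\det(PMR) = \det(P(MR))$ and then $\det((MR)_\nu) = \det(M_\nu R)$, successive applications yield
\[
\det(PMR) = \sum_{\nu,\rho\in\GPar_n} \det(P^\nu)\,\det(M_\nu^\rho)\,\det(R_\rho).
\]
A direct computation (followed by transposition) gives
\begin{align*}
\det(P^\nu) &= \det\bigl(h_{\nu_i-\lambda_j-i+j}[A_{\lambda_j}-B_{j-1}]\bigr)_{i,j=1}^n = C_{\lambda,\nu}(\va,\vb),\\
\det(R_\rho) &= \det\bigl(h_{\mu_i-\rho_j-i+j}[-A_{\mu_i}+B_i]\bigr)_{i,j=1}^n = C'_{\rho,\mu}(\va,\vb),\\
\det(M_\nu^\rho) &= \det\bigl(h_{\nu_i-\rho_j-i+j}[X]\bigr)_{i,j=1}^n = s_{\nu/\rho}(\vx),
\end{align*}
where the last identity uses the (extended) Jacobi--Trudi formula defining $s_{\nu/\rho}(\vx)$ for generalized partitions.

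Finally, I will restrict the range of summation using vanishing conditions. By a direct generalization of Lemma~\ref{lem:det(h)=0} to $\GPar_n$, we have $C_{\lambda,\nu}(\va,\vb)=0$ unless $\lambda\subseteq\nu$, and $C'_{\rho,\mu}(\va,\vb)=0$ unless $\rho\subseteq\mu$; together with the hypothesis $\mu\subseteq\lambda$ this forces $\rho\subseteq\mu\subseteq\lambda\subseteq\nu$ (and in particular $\rho\subseteq\nu$, so $s_{\nu/\rho}(\vx)$ is a genuine skew Schur function). Multiplying by the prefactor $C$ from \eqref{eq:JT skew G} yields exactly \eqref{eq:G=CsC}. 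The main technical point is verifying that the indices work out correctly in the factorization $H_{ij}=(PMR)_{ij}$ and that the Cauchy--Binet theorem for generalized partitions applies to matrices with $\ZZ$-indexed rows/columns; the remainder of the argument is bookkeeping.
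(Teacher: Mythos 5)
Your proposal is correct and follows essentially the same route as the paper: the paper also starts from the Jacobi--Trudi formula \eqref{eq:JT skew G}, factors the matrix as a triple product $PQR$ (your $M$ is the paper's $Q=(h_{i-j}[X])_{i,j\in\ZZ}$), applies Lemma~\ref{lem: C-B for GPar} to get $\sum_{\nu,\rho\in\GPar_n}(\det P^\nu)(\det Q_\nu^\rho)(\det R_\rho)$, and restricts the sum via the vanishing of the factors unless $\rho\subseteq\mu\subseteq\lambda\subseteq\nu$. Your index bookkeeping and the identification of the three determinants with $C_{\lambda,\nu}$, $s_{\nu/\rho}$, and $C'_{\rho,\mu}$ all check out.
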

  \begin{proof}
    We rewrite the \((i,j)\)-entry of the matrix in \eqref{eq:JT skew G} as
    \begin{align*}
     & h_{\lambda_i-\mu_j-i+j}[X\ominus(A_{\lambda_i}-A_{\mu_j}-B_{i-1} + B_{j})] \\
      &= \sum_{a-b-c=\lambda_i-\mu_j-i+j}  h_{a}[X] h_{b}[A_{\lambda_i}-B_{i-1}] h_{c}[-A_{\mu_j}+B_j]\\
      &= \sum_{k,l \in \ZZ} h_{k-(\lambda_i+n-i)}[A_{\lambda_i}-B_{i-1}] h_{k-l}[X] h_{(\mu_j+n-j)-l}[-A_{\mu_j}+B_j].
    \end{align*}
    Let \(P=(h_{j-(\lambda_i+n-i)}[A_{\lambda_i}-B_{i-1}])_{i\in [n],j\in \ZZ },
    Q=(h_{i-j}[X])_{i,j\in\ZZ}\) and
    \(R=(h_{(\mu_j+n-j)-i}[-A_{\mu_j}+B_j])_{i\in \ZZ,j\in [n]} \). By the above
    equation and \eqref{eq:JT skew G}, we have \( G_\lm(\vx;\va,\vb)= C \det(PQR) \). On the other hand, by Corollary~\ref{cor:cor of CB} we have
    \begin{align*}
      \det \left( P Q R \right) = \sum_{\nu,\rho\in\GPar_n} (\det P^{\nu}) (\det Q_{\nu}^{\rho}) (\det R_{\rho})
      = \sum_{\nu,\rho\in\GPar_n} C_{\lambda,\nu}(\va,\vb) s_{\nu/\rho}(\vx) C'_{\rho,\mu}(\va,\vb).
    \end{align*}
    Then the proof follows since \( C_{\lambda,\nu}(\va,\vb) s_{\nu/\rho}(\vx) C'_{\rho,\mu}(\va,\vb) \)
     is zero unless \(\rho\subseteq\mu\subseteq\lambda\subseteq\nu\)
    by Lemma~\ref{lem:det(h)=0} (with \( \GPar_n \) in place of \( \Par_n \)).
  \end{proof}
  
  \begin{thm} \label{thm:G=DSD}
    For \(\lambda,\mu\in \Par_n\) with \( \mu\subseteq\lambda \), we have
   \begin{align*}
     G_{\lambda'/\mu'}(\vx;\va,\vb)= D
     \sum_{\substack{\rho,\nu\in \GPar_n\\ \rho\subseteq\mu\subseteq\lambda\subseteq\nu}}
     D_{\lambda,\nu}(\va,\vb) s_{\nu'/\rho'}(\vx) D'_{\rho,\mu}(\va,\vb),
  \end{align*}
  where 
  \begin{align*}
    D &= \prod_{i=1}^{n} \prod_{l=1}^{\infty} (1-\alpha_i x_l)^{-1}, \\
    D_{\lambda,\nu}(\va,\vb) &= \det \left( e_{\nu_i-\lambda_j-i+j}[A_{j-1}-B_{\lambda_j}] \right)_{i,j=1}^n, \\
    D'_{\rho,\mu}(\va,\vb) &= \det \left( e_{\mu_i-\rho_j-i+j}[-A_i+B_{\mu_i}] \right)_{i,j=1}^n.
  \end{align*}
  \end{thm}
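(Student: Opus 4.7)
The plan is to mirror the proof of Theorem~\ref{thm:G=CsC} using the $e$-Jacobi--Trudi formula \eqref{eq:dualJT skew G} in place of \eqref{eq:JT skew G}, so that the same double-Cauchy--Binet bookkeeping carries over verbatim.

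First I would factor the $(i,j)$-entry of the matrix in \eqref{eq:dualJT skew G} as a triple sum. Writing $A_{i-1}-A_j-B_{\lambda_i}+B_{\mu_j}=(A_{i-1}-B_{\lambda_i})+(-A_j+B_{\mu_j})$, the definition of $\ominus$ combined with the splitting identity $e_n[Y+Z]=\sum_{a+b=n}e_a[Y]e_b[Z]$ from Proposition~\ref{prop:h[-Z]} yields
\[
  e_{\lambda_i-\mu_j-i+j}\bigl[X\ominus(A_{i-1}-A_j-B_{\lambda_i}+B_{\mu_j})\bigr]
  =\sum_{a-b-c=\lambda_i-\mu_j-i+j} e_a[X]\,e_b[A_{i-1}-B_{\lambda_i}]\,e_c[-A_j+B_{\mu_j}].
\]
This expresses the entry as the $(i,j)$-entry of $P'Q'R'$, where
\[
  P'=(e_{k-(\lambda_i+n-i)}[A_{i-1}-B_{\lambda_i}])_{i\in[n],\,k\in\ZZ},\qquad
  Q'=(e_{k-l}[X])_{k,l\in\ZZ},
\]
\[
  R'=(e_{(\mu_j+n-j)-l}[-A_j+B_{\mu_j}])_{l\in\ZZ,\,j\in[n]},
\]
so that $G_{\lambda'/\mu'}(\vx;\va,\vb)=D\,\det(P'Q'R')$ by \eqref{eq:dualJT skew G}.

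Next, applying Lemma~\ref{lem: C-B for GPar} twice (once to $P'\cdot(Q'R')$ and once inside to $Q'_\nu\cdot R'$) gives
\[
  \det(P'Q'R')=\sum_{\nu,\rho\in\GPar_n}\det(P'^\nu)\,\det(Q'^\rho_\nu)\,\det(R'_\rho).
\]
A direct computation shows $\det(Q'^\rho_\nu)=\det(e_{\nu_i-\rho_j-i+j}[X])_{i,j=1}^n=s_{\nu'/\rho'}(\vx)$, using the extension of skew Schur functions to generalized skew shapes via the dual Jacobi--Trudi formula. For $\det(P'^\nu)$ and $\det(R'_\rho)$, unrolling the definitions produces the transposes of the matrices appearing in $D_{\lambda,\nu}(\va,\vb)$ and $D'_{\rho,\mu}(\va,\vb)$; since $\det(M)=\det(M^T)$, we obtain $\det(P'^\nu)=D_{\lambda,\nu}(\va,\vb)$ and $\det(R'_\rho)=D'_{\rho,\mu}(\va,\vb)$. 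Finally, an $e$-analogue of Lemma~\ref{lem:det(h)=0} extended to generalized partitions (its proof only requires $e_m=0$ for $m<0$ together with the weakly-decreasing property of $\GPar_n$) forces $D_{\lambda,\nu}=0$ unless $\lambda\subseteq\nu$ and $D'_{\rho,\mu}=0$ unless $\rho\subseteq\mu$, cutting the summation down to $\rho\subseteq\mu\subseteq\lambda\subseteq\nu$.

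The main obstacle is pure bookkeeping: correctly matching entries after transposition against the formulas defining $D_{\lambda,\nu}$ and $D'_{\rho,\mu}$, and verifying that the triple product $P'Q'R'$ is well defined as a series (which follows from $P'_{i,k}=0$ unless $k\ge\lambda_i+n-i$ and an analogous support bound for $R'$). No new ingredient beyond those already used for Theorem~\ref{thm:G=CsC} should be required.
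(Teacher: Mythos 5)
Your proposal is correct and is essentially the argument the paper intends: the paper's own proof of this theorem is just the one-line remark that it is similar to Theorem~\ref{thm:G=CsC}, and your triple factorization, double application of Lemma~\ref{lem: C-B for GPar}, identification of the middle determinant with $s_{\nu'/\rho'}(\vx)$, and the $e$-analogue of Lemma~\ref{lem:det(h)=0} are exactly the expected adaptations. The transposition bookkeeping you describe checks out against the stated formulas for $D_{\lambda,\nu}$ and $D'_{\rho,\mu}$.
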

  \begin{proof}
    This is similar to the proof of Theorem~\ref{thm:G=CsC}.
  \end{proof}

  \begin{remark}
    The current formulas for \( G_\lm(\vx;\va,\vb) \) and \(
    G_{\lambda'/\mu'}(\vx;\va,\vb) \) in Theorems~\ref{thm:G=CsC} and
    \ref{thm:G=DSD} are not yet linear combinations of skew Schur functions,
    since there are variables \(x_1,x_2,\dots\) in \(C\) and \(D\). Since
    \[
      C=\prod_{i=1}^n \sum_{m\geq 0}(-1)^m \beta_i^m e_m(\vx) \qand D= \prod_{i=1}^n \sum_{m\geq 0}\alpha_i^m h_m(\vx), 
    \] 
    it is possible to rewrite the formulas as linear combinations of
    skew Schur functions using the Pieri rule for skew shapes \cite{assaf2011pieri}. 
  \end{remark}
  
  Recall that \( \IET_{\ZZ}(\nu/\lambda) \) is the set of \( \ZZ \)-inelegant
  tableaux of shape \( \nu/\lambda \) defined in Section~\ref{sec:lattice}. We
  also define \( \overline{\ET}_{\ZZ}(\mu/\rho) \) to be the set of fillings \(
  T \) of the cells in the generalized skew shape \( \mu/\rho \) with integers such that the rows are
  weakly increasing, the columns are strictly increasing, and \( \min(i-j,0) <
  T(i,j)\le i \) for all \( (i,j)\in \mu/\rho \).

  The determinants \(C_{\lambda,\nu}(\va,\vb),C'_{\rho,\mu}(\va,\vb),
  D_{\lambda,\nu}(\va,\vb)\), and \( D'_{\rho,\mu}(\va,\vb) \) in
  Theorems~\ref{thm:G=CsC} and \ref{thm:G=DSD} have the following combinatorial
  interpretations.

  \begin{prop}\label{prop:CD}
    For \( \lambda,\mu\in\Par_n \) and \( \rho,\nu\in\GPar_n \)
    with \( \rho\subseteq\mu\subseteq\lambda\subseteq\nu \),
    we have
  \begin{align}
    \label{eq:C}
    C_{\lambda,\nu}(\va,\vb)
    &=\sum_{T\in\IET_{\ZZ}(\nu/\lambda)}\prod_{(i,j)\in \nu/\lambda}(\alpha_{T(i,j)}-\beta_{T(i,j)-c(i,j)}), \\
    \label{eq:C'}
    C'_{\rho,\mu}(\va,\vb)
    &=\sum_{T\in\overline{\ET}_{\ZZ}(\mu/\rho)}\prod_{(i,j)\in \mu/\rho}(-\alpha_{T(i,j)+c(i,j)}+\beta_{T(i,j)}),\\
    \label{eq:D}
    D_{\lambda,\nu}(\va,\vb)
    &=\sum_{T\in\IET_{\ZZ}(\nu/\lambda)}\prod_{(i,j)\in \nu/\lambda}(-\beta_{T(i,j)}+\alpha_{T(i,j)-c(i,j)}), \\
    \label{eq:D'}
    D'_{\rho,\mu}(\va,\vb)
    &=\sum_{T\in\overline{\ET}_{\ZZ}(\mu/\rho)}\prod_{(i,j)\in \mu/\rho}(\beta_{T(i,j)+c(i,j)}-\alpha_{T(i,j)}).
  \end{align}
  Here we assume \( \alpha_k=\beta_k=0 \) if \( k\le 0 \).
  \end{prop}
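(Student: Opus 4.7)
The plan is to treat the four identities in two pairs, reducing the essentially new work to a single lattice-path computation. Identity \eqref{eq:C} is simply Theorem~\ref{thm:C_lm} applied to the pair $(\lambda,\nu)$, which is legal because the hypothesis $\lambda\subseteq\nu$ with $\lambda\in\Par_n$ forces $\nu_n\ge\lambda_n\ge0$, so $\nu$ is an ordinary partition; hence nothing needs to be re-proved for \eqref{eq:C}.

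For the pair $\{$\eqref{eq:C},\eqref{eq:D}$\}$, I would exploit the identity $e_m[Z]=(-1)^m h_m[-Z]$ to derive
\[
  D_{\lambda,\nu}(\va,\vb) \;=\; (-1)^{|\nu/\lambda|}\,C_{\lambda,\nu}(\vb,\va).
\]
Pulling the row and column sign factors outside the determinant gives an overall sign $(-1)^{\sum_i(\nu_i-\lambda_{\sigma(i)}-i+\sigma(i))}$, which collapses to $(-1)^{|\nu|-|\lambda|}=(-1)^{|\nu/\lambda|}$ uniformly in the permutation. Feeding this into \eqref{eq:C} and distributing the overall sign cell-by-cell in the product (since $|\nu/\lambda|$ equals the number of cells) yields \eqref{eq:D}. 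The same trick, essentially verbatim, turns \eqref{eq:C'} into \eqref{eq:D'}.

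The real content, then, is \eqref{eq:C'}. I would repeat the lattice-path strategy used in the proof of Theorem~\ref{thm:c_lm}, but with the weighted graph of Proposition~\ref{prop: hm[-A+B]} and the substitution $r=\mu_i-i$, $s=i$, $t=\rho_j-j$. This gives
\[
  h_{\mu_i-\rho_j-i+j}[-A_{\mu_i}+B_i]
  \;=\; \wt\bigl(P_\Gamma\bigl((\rho_j-j,\,\min(-\rho_j+j-1,0)),\,(\mu_i-i,\,i)\bigr)\bigr).
\]
An analogue of Lemma~\ref{lem: h[A-B]} then lets me raise the starting height by one: the first east step from $(\rho_j-j,\min(-\rho_j+j-1,0))$ has weight $-\alpha_{\rho_j-j+\min(-\rho_j+j-1,0)+1}+\beta_{\min(-\rho_j+j-1,0)}$, which a short case check (splitting on whether $\rho_j\ge j-1$ or not) shows equals $0$. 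Hence we may replace the starting point by $u_j=(\rho_j-j,\min(j-\rho_j,1))$ and, taking $v_i=(\mu_i-i,i)$, apply the Lindstr\"om--Gessel--Viennot lemma to express $C'_{\rho,\mu}(\va,\vb)$ as a sum of weights of nonintersecting path systems from $(u_1,\dots,u_n)$ to $(v_1,\dots,v_n)$.

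The final step is the translation from nonintersecting paths to tableaux. Reading off the heights of east steps in $p_i$ as the entries $T(i,\rho_i+1),\dots,T(i,\mu_i)$, weakly-increasing rows follow from monotonicity of east-step heights, strictly-increasing columns from nonintersection, the upper bound $T(i,j)\le i$ from the ending height $i$, and the lower bound $T(i,\rho_i+1)\ge\min(i-\rho_i,1)$ from the starting height. The verification that this row-flagged bound combined with the monotonicity conditions is equivalent to the cell-wise bound $\min(i-j,0)<T(i,j)$ defining $\overline{\ET}_\ZZ(\mu/\rho)$ is the same style of argument used in the original equivalence for $\ET_\ZZ$; it boils down to the chain $T(i,j)\ge T(i,\rho_i+1)\ge\min(i-\rho_i,1)\ge\min(i-j+1,1)$ for $j\ge\rho_i+1$. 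Finally, the weight of the east step at $(a,b)$ is $-\alpha_{a+b+1}+\beta_b$, and under the identification $b=T(i,j)$, $a=j-i-1$, we have $a+b+1=T(i,j)+c(i,j)$, so the step weight becomes $-\alpha_{T(i,j)+c(i,j)}+\beta_{T(i,j)}$, producing exactly \eqref{eq:C'}. The main obstacle, as in previous sections, is this last bookkeeping: aligning the starting-height normalization with the cell-wise definition of $\overline{\ET}_\ZZ$, and making sure the weight of an east step matches the $\overline{\ET}_\ZZ$ weight cell-by-cell under the chosen coordinates.
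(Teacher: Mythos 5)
Your proposal is correct and follows essentially the same route as the paper: \eqref{eq:C} is read off from Theorem~\ref{thm:C_lm}, the pairs \eqref{eq:C}/\eqref{eq:D} and \eqref{eq:C'}/\eqref{eq:D'} are related by the $h_m[Z]=(-1)^m e_m[-Z]$ sign-swap (your relation $D_{\lambda,\nu}(\va,\vb)=(-1)^{|\nu/\lambda|}C_{\lambda,\nu}(\vb,\va)$ is just a rewriting of the paper's $C_{\lambda,\nu}(\va,\vb)=D_{\lambda,\nu}(-\vb,-\va)$), and \eqref{eq:C'} is obtained by rerunning the Lindstr\"om--Gessel--Viennot argument with Proposition~\ref{prop: hm[-A+B]}. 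The paper only sketches this last step; your choice of parameters $r=\mu_i-i$, $s=i$, $t=\rho_j-j$, the zero-weight first east step, and the bound bookkeeping for $\overline{\ET}_\ZZ(\mu/\rho)$ all check out.
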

  \begin{proof}
    Since \(h_m[Z]=(-1)^m e_m[-Z]\) for any formal power series \(Z\), we have
    \begin{align*}
      \det \left( h_{\nu_i-\lambda_j-i+j}[A_{\lambda_j}-B_{j-1}] \right)_{i,j=1}^n &= 
      (-1)^{|\nu/\lambda|}\det \left( e_{\nu_i-\lambda_j-i+j}[B_{j-1}-A_{\lambda_j}] \right)_{i,j=1}^n \mbox{ and}\\
      \det \left( h_{\mu_i-\rho_j-i+j}[-A_{\mu_i}+B_i] \right)_{i,j=1}^n &=
      (-1)^{|\mu/\rho|} \det \left( e_{\mu_i-\rho_j-i+j}[-B_i+A_{\mu_i}] \right)_{i,j=1}^n.
    \end{align*}
    In addition, substituting \(-\vb\) and \(-\va\) for \(\va\) and \(\vb\),
    respectively, turns \( e_m[B_r-A_s] \) into \( (-1)^m e_m[A_r-B_s] \).
    Therefore, we obtain
    \begin{equation}
      \label{eq:CDCD}
      C_{\lambda,\nu}(\va,\vb)=D_{\lambda,\nu}(-\vb,-\va) \qand
      C'_{\rho,\mu}(\va,\vb)=D'_{\rho,\mu}(-\vb,-\va).
    \end{equation}

    Using \eqref{eq:CDCD}, we immediately obtain \eqref{eq:D} and \eqref{eq:D'} from \eqref{eq:C} and \eqref{eq:C'}, respectively.
    Since \eqref{eq:C} is the same as Theorem~\ref{thm:C_lm}, it remains to show \eqref{eq:C'}.
    
    The proof is similar to that of Theorem~\ref{thm:C_lm}. Let \( \Gamma \) be the weighted lattice graph in Proposition~\ref{prop: hm[-A+B]}.
    For any \( i, j \in \{1,\dots,n\} \), Proposition~\ref{prop: hm[-A+B]} with \( r=\mu_i-i, s = i \), and \( t = \rho_j-j \) gives 
    \begin{align*}
      h_{\mu_i-\rho_j-i+j}[-A_{\mu_i}+B_i] = \wt(P_{\Gamma} (( \rho_j-j,\min(-\rho_j+j-1,0) ),( \mu_i-i,i ))).
    \end{align*}
    If the first step of a path \( p\in P_{\Gamma} (( \rho_j-j,\min(-\rho_j+j-1,0) ),( \mu_i-i,i )) \) is an east step, then the weight of the step is zero since \( \rho_j-j+\min(-\rho_j+j-1,0)+1 \le 0 \) and \( \min(-\rho_j+j-1,0) \le 0 \).
    Hence, we can write 
    \begin{align*}
      h_{\mu_i-\rho_j-i+j}[-A_{\mu_i}+B_i] = \wt(P_{\Gamma} (( \rho_j-j,\min(-\rho_j+j,1) ),( \mu_i-i,i ))).
    \end{align*}

    By the Lindstr\"om--Gessel--Viennot lemma, \( \det\left( h_{\mu_i-\rho_j-i+j}[-A_{\mu_i}+B_i] \right)_{i,j=1}^n \) can be expressed as the weight sum of the nonintersecting \( n \)-paths \(\vp=(p_1,\dots,p_n)\) where \(p_i\in P_\Gamma((\rho_i-i,\min(-\rho_i+i,1)),(\mu_i-i,i))\), and the set of such nonintersecting \( n \)-paths \( \vp \) corresponds to \( \overline{\ET}_\ZZ(\mu/\rho) \) bijectively.
    Hence, we obtain \eqref{eq:C'}, which completes the proof.
  \end{proof}

  We show that the special case \( \va=(0,0,\dots) \) of Theorem~\ref{thm:G=DSD}
  is equivalent to (the transposed version of) the result of Chan and Pflueger
  \cite{CP21:grothendieck} on a skew Schur expansion of \(
  RG_\lm(\vx;\vb)=G_\lm(\vx;\bm0,\vb) \). We need some definitions to state
  their result.
  
  For \( \lambda,\mu\in\Par \), let \( X(\lm) \) denote the set of tableaux \( T
  \) of shape \( \lambda/\mu \) with integer entries such that the rows are
  weakly decreasing, the columns are strictly decreasing, and \( 0<T(i,j)<j \)
  for all \( (i,j)\in \lambda/\mu \). Let \( Y(\lm) \) denote the set of
  tableaux \( T \) of shape \( \lambda/\mu \) with integer entries  such that the
  rows are strictly increasing, the columns are weakly increasing, and \(
  0<T(i,j)\le j \) for all \( (i,j)\in \lambda/\mu \). For a tableau \( T \) of
  shape \( \lm \) with integer entries, let \(
  \vb^T=\prod_{(i,j)\in\lm}\beta_{T(i,j)} \) and \(
  (-\vb)^T=\prod_{(i,j)\in\lm}(-\beta_{T(i,j)}) \).

  Let \( \lambda,\mu\in\Par \) and fix an integer \( n \) satisfying \(
  \lambda,\mu\in\Par_n \). After taking transposes, the result of Chan and
  Pflueger \cite[Theorem~3.4]{CP21:grothendieck} can be stated as
  \begin{align}\label{eq:ChanPflueger}
    G_{\lambda'/\mu'}(\vx;\bm0,\vb)=\sum_{\substack{\rho,\nu\in\Par\\ \rho\subseteq\mu\subseteq\lambda\subseteq\nu}}
    \sum_{\substack{T_1\in X(\nu/\lambda)\\ T_2\in Y(\mu/\rho)}} (-\vb)^{T_1} s_{\nu'/\rho'}(\vx) \vb^{T_2}.
  \end{align}
  Combining Theorem~\ref{thm:G=DSD} for \( \va=(0,0,\dots) \) with \eqref{eq:D} and \eqref{eq:D'}, we have
  \begin{align}\label{eq:ChanPflueger3}
    G_\lmc(\vx;\bm0,\vb)=\sum_{\substack{\rho,\nu\in\GPar_n\\ \rho\subseteq\mu\subseteq\lambda\subseteq\nu}}D_{\lambda,\nu}(\bm0,\vb)s_{\nu'/\rho'}(\vx) D'_{\rho,\mu}(\bm0,\vb),
  \end{align}
  where 
  \begin{align*}
    D_{\lambda,\nu}(\bm0,\vb)&=\sum_{T\in\IET_{\ZZ}(\nu/\lambda)}\prod_{(i,j)\in \nu/\lambda}(-\beta_{T(i,j)}),\\
    D'_{\rho,\mu}(\bm0,\vb)&=\sum_{T\in\overline{\ET}_{\ZZ}(\mu/\rho)}\prod_{(i,j)\in \mu/\rho}\beta_{T(i,j)+c(i,j)}.
  \end{align*}

  Let \( S\in \overline{\ET}_{\ZZ}(\mu/\rho)\) for some \( \rho\in\GPar_n \)
  with \( \rho \subseteq \mu \). If there is a cell \( (i,j)\in\mu/\rho \)
  such that \( j\le 0 \), then \( S(i,j)+c(i,j) \le i +(j-i) \le 0 \). Thus, \(
  D'_{\rho,\mu}(\bm0,\vb) \) is zero unless \( \rho\in \Par_n \), and we can
  replace the condition \( \rho\in\GPar_n \) by \( \rho\in\Par_n \) in the sum
  in \eqref{eq:ChanPflueger3}. Note that \( \lambda\subseteq\nu \) implies \( \nu\in\Par_n \).
  On the other hand, if \( \ell(\nu)>\ell(\lambda)
  \), then by definition we have \( X(\nu/\lambda)=\emptyset \), which implies
  that we can replace the condition \( \rho,\nu\in\Par \) by \(
  \rho,\nu\in\Par_n \) in \eqref{eq:ChanPflueger}. Therefore, for the equivalence of
  \eqref{eq:ChanPflueger} and \eqref{eq:ChanPflueger3} it suffices to show that
  for \( \rho,\nu\in\Par_n \) with \(
  \rho\subseteq\mu\subseteq\lambda\subseteq\nu \) we have
  \begin{align}
    \label{eq:eqCP1}
    \sum_{T\in\IET_{\ZZ}(\nu/\lambda)}(-\vb)^T
    &= \sum_{T_1\in X(\nu/\lambda)} (-\vb)^{T_1},\\
    \label{eq:eqCP2}
    \sum_{T\in\overline{\ET}_{\ZZ}(\mu/\rho)}\vb^{T^*}
    &= \sum_{T_2\in Y(\mu/\rho)}  \vb^{T_2},
  \end{align}
  where for \( T\in\overline{\ET}_{\ZZ}(\mu/\rho) \)
  we define \( T^* \) to be the tableau of shape \( \mu/\rho \) 
  given by \( T^*(i,j)=T(i,j)+c(i,j) \) for all \( (i,j)\in\mu/\rho \).
  It is straightforward to check that
  \begin{align*}
    X(\nu/\lambda)
    &=\{T\in\IET_\ZZ(\nu/\lambda): T(i,j)>0 \mbox{ for all \( (i,j)\in \nu/\lambda\)}\},\\
    Y(\mu/\rho)
    &=\{T^*: T\in\overline{\ET}_{\ZZ}(\mu/\rho), T^*(i,j)>0 \mbox{ for all \( (i,j)\in \mu/\rho\)}\}.
  \end{align*}
  Since \( \beta_k=0 \) for \( k\le0 \), the above descriptions for \(
  X(\nu/\lambda) \) and \( Y(\mu/\rho) \) imply \eqref{eq:eqCP1} and
  \eqref{eq:eqCP2}, completing the proof of the equivalence of
  \eqref{eq:ChanPflueger} and \eqref{eq:ChanPflueger3}.

  We now show the effect of  the involution \(\omega\) on \(G_\lm(\vx;\va,\vb)\).

  \begin{thm}\label{thm:omega_G}
    For \(\lambda,\mu\in \Par\) with \( \mu\subseteq\lambda \), we have
    \begin{align*}
      \omega(G_\lm(\vx;\va,\vb)) &= G_{\lambda'/\mu'}(\vx;-\vb,-\va).
    \end{align*}
  \end{thm}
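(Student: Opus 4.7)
The plan is to derive the identity by applying $\omega$ termwise to the skew Schur expansion in Theorem~\ref{thm:G=CsC} and matching the result with the expansion in Theorem~\ref{thm:G=DSD} under the parameter swap $(\va,\vb) \mapsto (-\vb,-\va)$. Recall that Theorem~\ref{thm:G=CsC} gives
\[
  G_{\lm}(\vx;\va,\vb) = C \sum_{\rho \subseteq \mu \subseteq \lambda \subseteq \nu} C_{\lambda,\nu}(\va,\vb)\, s_{\nu/\rho}(\vx)\, C'_{\rho,\mu}(\va,\vb),
\]
where $C = \prod_{i,l}(1-\beta_i x_l)$, and that the involution $\omega$ is $\QQ[\va,\vb]$-linear, acts trivially on the parameter-dependent coefficients, and sends $s_{\nu/\rho}(\vx) \mapsto s_{\nu'/\rho'}(\vx)$.

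First I would compute $\omega(C)$. Since $\prod_l(1-\beta_i x_l) = \sum_{k\ge0}(-\beta_i)^k e_k(\vx)$, applying $\omega$ turns the $e_k$'s into $h_k$'s, giving $\omega(C) = \prod_i \sum_{k\ge0}(-\beta_i)^k h_k(\vx) = \prod_{i,l}(1+\beta_i x_l)^{-1}$. This is exactly the prefactor $D = \prod_{i,l}(1-\alpha_i x_l)^{-1}$ appearing in Theorem~\ref{thm:G=DSD} after the substitution $(\va,\vb) \mapsto (-\vb,-\va)$. Next I would invoke the identities \eqref{eq:CDCD} established in Proposition~\ref{prop:CD}, namely $C_{\lambda,\nu}(\va,\vb) = D_{\lambda,\nu}(-\vb,-\va)$ and $C'_{\rho,\mu}(\va,\vb) = D'_{\rho,\mu}(-\vb,-\va)$, which come from the transposition formula $h_m[Z] = (-1)^m e_m[-Z]$ applied to the determinantal definitions of $C_{\lambda,\nu}$ and $C'_{\rho,\mu}$.

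Combining these three observations, applying $\omega$ to the expansion above yields
\[
  \omega(G_\lm(\vx;\va,\vb))
  = D\bigl|_{(\va,\vb)\to(-\vb,-\va)} \sum_{\rho,\nu} D_{\lambda,\nu}(-\vb,-\va)\, s_{\nu'/\rho'}(\vx)\, D'_{\rho,\mu}(-\vb,-\va),
\]
where the sum still ranges over $\rho \subseteq \mu \subseteq \lambda \subseteq \nu$. Comparing with the formula in Theorem~\ref{thm:G=DSD} evaluated at $(-\vb,-\va)$ identifies this with $G_{\lambda'/\mu'}(\vx;-\vb,-\va)$, which completes the proof.

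The steps are all essentially bookkeeping once the expansions of Theorems~\ref{thm:G=CsC} and \ref{thm:G=DSD} are in hand; the only place requiring care is verifying the prefactor computation $\omega(C) = D|_{\va\to-\vb}$, since mishandling the generating-function identity $\sum_k (-\beta_i)^k h_k(\vx) = \prod_l(1+\beta_i x_l)^{-1}$ would introduce a sign error. The analogous identity for $g_\lm$ claimed in Theorem~\ref{thm:omega-intro} should follow by the same strategy, but using the skew Schur expansion that one would derive from \eqref{eq:JT skew g} and \eqref{eq:dualJT skew g} together with the Cauchy--Binet theorem for generalized partitions (Lemma~\ref{lem: C-B for GPar}), where now no prefactor appears and only the matching $(\va,\vb)\leftrightarrow(-\vb,-\va)$ symmetry between the $h$-version and $e$-version coefficients has to be checked.
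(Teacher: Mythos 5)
Your proposal is correct and follows essentially the same route as the paper's proof: apply $\omega$ termwise to the expansion of Theorem~\ref{thm:G=CsC}, compute $\omega(C)=\prod_{i,l}(1+\beta_i x_l)^{-1}=D(-\vb)$, invoke the identities \eqref{eq:CDCD}, and match the result against Theorem~\ref{thm:G=DSD} at $(-\vb,-\va)$. The only cosmetic difference is that the paper works with a fixed $n$ and sums over $\GPar_n$, which your bookkeeping implicitly accommodates.
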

  \begin{proof}
    Fix an integer \( n \) such that \(\lambda,\mu\in \Par_n\).
    Let \(C(\vb)=\prod_{i=1}^{n} \prod_{l=1}^{\infty} (1-\beta_i x_l)\) and 
    \(D(\va)=\prod_{i=1}^{n} \prod_{l=1}^{\infty} (1-\alpha_i x_l)^{-1}\).
    By the well-known property of the involution \( \omega \) (see \cite[Lemma~7.14.4]{EC2}), we have
    \begin{align}\label{eq:omega_C=D}
      \omega(C(\vb))=\omega\left(\prod_{i=1}^{n}\prod_{l=1}^{\infty} (1-\beta_i x_l)\right)=\prod_{i=1}^{n} \prod_{l=1}^{\infty} (1+\beta_i x_l)^{-1}=D(-\vb).
    \end{align}
    Thus, using Theorem~\ref{thm:G=CsC}, \eqref{eq:CDCD}, and Theorem~\ref{thm:G=DSD}, we have
    \begin{align*}
      \omega(G_\lm(\vx;\va,\vb))
      &= \omega (C(\vb)) \sum_{\substack{\rho,\nu\in \GPar_n \\ \rho\subseteq\mu\subseteq\lambda\subseteq\nu}} C_{\lambda,\nu}(\va,\vb) \omega(s_{\nu/\rho}(\vx)) C'_{\rho,\mu}(\va,\vb) \\
      &= D(-\vb) \sum_{\substack{\rho,\nu\in \GPar_n \\ \rho\subseteq\mu\subseteq\lambda\subseteq\nu}} D_{\lambda,\nu}(-\vb,-\va) s_{\nu'/\rho'}(\vx) D'_{\rho,\mu}(-\vb, -\va) \\
      &= G_{\lambda'/\mu'}(\vx;-\vb, -\va),
    \end{align*}
    as desired.
  \end{proof}

  For the remaining part of this section, we expand \(g_\lm(\vx;\va,\vb)\) in
  terms of skew Schur functions and compute \(\omega(g_\lm(\vx;\va,\vb))\).

\begin{thm} \label{thm:g=csc}
  For \(\lambda,\mu\in \Par_n\), we have
  \begin{align*}
    g_\lm(\vx;\va,\vb) = \sum_{\substack{\rho,\nu\in \Par_n \\ \mu\subseteq\rho\subseteq\nu\subseteq\lambda}} c_{\lambda,\nu}(\va,\vb) s_{\nu/\rho}(\vx) c'_{\rho,\mu}(\va,\vb),
  \end{align*}
  where
  \begin{align*}
    c_{\lambda,\nu}(\va,\vb) &= \det\left( h_{\lambda_i-\nu_j-i+j}[-A_{\lambda_i-1}+B_{i-1}]  \right)_{i,j=1}^n ,  \\
    c'_{\rho,\mu}(\va,\vb) &= \det\left( h_{\rho_i-\mu_j-i+j}[A_{\mu_j}-B_{j-1}]  \right)_{i,j=1}^n.
  \end{align*}
\end{thm}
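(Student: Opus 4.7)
The plan is to mimic the proof of Theorem~\ref{thm:G=CsC} starting from the Jacobi--Trudi formula \eqref{eq:JT skew g}. First, I would expand each $(i,j)$-entry of that matrix using the plethystic splitting
\[
  h_{\lambda_i - \mu_j - i + j}[X - A_{\lambda_i-1} + A_{\mu_j} + B_{i-1} - B_{j-1}]
  = \sum_{a+b+c = \lambda_i - \mu_j - i + j}
  h_a[-A_{\lambda_i-1}+B_{i-1}]\, h_b[X]\, h_c[A_{\mu_j}-B_{j-1}],
\]
and then realize this sum as the $(i,j)$-entry of a triple matrix product $PQR$, where
\begin{align*}
  P &= \bigl(h_{(\lambda_i+n-i)-k}[-A_{\lambda_i-1}+B_{i-1}]\bigr)_{i\in[n],\,k\in\ZZ},\\
  Q &= \bigl(h_{k-l}[X]\bigr)_{k,l\in\ZZ},\\
  R &= \bigl(h_{l-(\mu_j+n-j)}[A_{\mu_j}-B_{j-1}]\bigr)_{l\in\ZZ,\,j\in[n]}.
\end{align*}

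Next, I would apply the generalized Cauchy--Binet formula (Lemma~\ref{lem: C-B for GPar}) twice: once to $P\cdot(QR)$ to peel off an index $\nu\in\GPar_n$, and once more to $Q_\nu\cdot R$ to peel off an index $\rho\in\GPar_n$. A direct reindexing then identifies the three factors as
\begin{align*}
  \det P^\nu &= \det\bigl(h_{\lambda_i-\nu_j-i+j}[-A_{\lambda_i-1}+B_{i-1}]\bigr)_{i,j=1}^n = c_{\lambda,\nu}(\va,\vb),\\
  \det (Q_\nu)^\rho &= \det\bigl(h_{\nu_i-\rho_j-i+j}[X]\bigr)_{i,j=1}^n = s_{\nu/\rho}(\vx),\\
  \det R_\rho &= \det\bigl(h_{\rho_i-\mu_j-i+j}[A_{\mu_j}-B_{j-1}]\bigr)_{i,j=1}^n = c'_{\rho,\mu}(\va,\vb),
\end{align*}
so that $g_\lm(\vx;\va,\vb)=\sum_{\nu,\rho\in\GPar_n} c_{\lambda,\nu}(\va,\vb)\,s_{\nu/\rho}(\vx)\,c'_{\rho,\mu}(\va,\vb)$.

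To finish, I would invoke Lemma~\ref{lem:det(h)=0} (in its $\GPar_n$ version) to conclude that the three determinants vanish unless $\nu\subseteq\lambda$, $\rho\subseteq\nu$, and $\mu\subseteq\rho$ respectively, so only tuples with $\mu\subseteq\rho\subseteq\nu\subseteq\lambda$ survive; since $\mu\in\Par_n$ has non-negative parts, this chain automatically forces $\rho,\nu\in\Par_n$, matching the indexing in the statement. I expect the only delicate point to be the bookkeeping in setting up $P$, $Q$, $R$ so that the iterated Cauchy--Binet matches the definitions of $c_{\lambda,\nu}$, $s_{\nu/\rho}$, and $c'_{\rho,\mu}$ on the nose; once the indices are aligned, everything else is routine vanishing and convergence (in the plethystic sense) of the two middle sums defining $PQ$ and $QR$.
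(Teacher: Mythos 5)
Your proposal is correct and is essentially the paper's own proof: factor the Jacobi--Trudi matrix from \eqref{eq:JT skew g} as a product $PQR$ via the plethystic splitting, apply Cauchy--Binet iteratively, and kill the unwanted terms with Lemma~\ref{lem:det(h)=0}. The only (harmless) difference is that you index over $\ZZ$ and invoke the $\GPar_n$ version of Cauchy--Binet, whereas the paper indexes over $\NN$ and uses Lemma~\ref{lem:C-B} directly — since there is no $\ominus$ here, $h_m=0$ for $m<0$ and the two bookkeeping schemes coincide.
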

\begin{proof}
  The \((i,j)\)-entry of the matrix in \eqref{eq:JT
    skew g} can be rewritten as
  \begin{multline*}
    h_{\lambda_i-\mu_j-i+j}[X-A_{\lambda_i-1}+A_{\mu_j}+B_{i-1} - B_{j-1}] \\
    = \sum_{k,l\geq 0}h_{\lambda_i+n-i-k}[-A_{\lambda_i-1}+B_{i-1}] h_{k-l}[X] h_{l-(\mu_j+n-j)}[A_{\mu_j}-B_{j-1}].
  \end{multline*}
  Let \(P=(h_{\lambda_i+n-i-j}[-A_{\lambda_i-1}+B_{i-1}])_{i\in [n],j\in\NN}\), \(Q=(h_{i-j}[X])_{i,j\in\NN} \), and \(R=(h_{i-(\mu_j+n-j)}[A_{\mu_j}-B_{j-1}])_{i\in\NN,j\in [n]} \). 
  By the same argument used to obtain Corollary~\ref{cor:cor of CB} from
  Lemma~\ref{lem: C-B for GPar}, we can deduce from Lemma~\ref{lem:C-B} that
  \begin{align} \label{eq:C-B for PQR}
    \notag\det (PQR) =\sum_{\rho,\nu\in \Par_n} (\det P^{\nu}) (\det Q_{\nu}^{\rho}) (\det R_{\rho})
    =\sum_{\rho,\nu\in \Par_n} c_{\lambda,\nu}(\va,\vb) s_{\nu/\rho}(\vx) c'_{\rho,\mu}(\va,\vb).
  \end{align} 
  Then the proof follows from Lemma~\ref{lem:det(h)=0}.
\end{proof}

\begin{thm}\label{thm:g=dsd}
  For \(\lambda,\mu\in \Par_n\) with \( \mu\subseteq\lambda \), we have
  \begin{align*}
    g_{\lambda'/\mu'}(\vx;\va,\vb) = \sum_{\substack{\rho,\nu\in \Par_n \\ \mu\subseteq\rho\subseteq\nu\subseteq\lambda}} d_{\lambda,\nu}(\va,\vb) s_{\nu'/\rho'}(\vx) d'_{\rho,\mu}(\va,\vb),
  \end{align*}
  where
  \begin{align*}
    d_{\lambda,\nu}(\va,\vb) &= \det\left( e_{\lambda_i-\nu_j-i+j}[-A_{i-1}+B_{\lambda_i-1}]  \right)_{i,j=1}^n ,  \\
    d'_{\rho,\mu}(\va,\vb) &= \det\left( e_{\rho_i-\mu_j-i+j}[A_{j-1}-B_{\mu_j}]  \right)_{i,j=1}^n.
  \end{align*}
\end{thm}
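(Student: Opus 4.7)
The plan is to mimic the proof of Theorem~\ref{thm:g=csc} but start from the dual Jacobi--Trudi formula \eqref{eq:dualJT skew g} for $g_{\lambda'/\mu'}(\vx;\va,\vb)$ and apply the Cauchy--Binet theorem (Lemma~\ref{lem:C-B}) to factor the matrix into three pieces whose row/column indices are reversed as Schur-indexing permits.

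First, I would rewrite the $(i,j)$-entry of the matrix in \eqref{eq:dualJT skew g} using the addition rule $e_m[Y_1+Y_2+Y_3]=\sum_{a+b+c=m}e_a[Y_1]e_b[Y_2]e_c[Y_3]$ applied to the decomposition $X-A_{i-1}+A_{j-1}+B_{\lambda_i-1}-B_{\mu_j}=(-A_{i-1}+B_{\lambda_i-1})+X+(A_{j-1}-B_{\mu_j})$. Parametrizing the three summation indices as $a=\lambda_i+n-i-k$, $b=k-l$, $c=l-(\mu_j+n-j)$, this expresses the entry as the $(i,j)$-entry of the product $PQR$, where
\[
  P=(e_{\lambda_i+n-i-j}[-A_{i-1}+B_{\lambda_i-1}])_{i\in[n],j\in\NN},\qquad
  Q=(e_{i-j}[X])_{i,j\in\NN},
\]
\[
  R=(e_{i-(\mu_j+n-j)}[A_{j-1}-B_{\mu_j}])_{i\in\NN,j\in[n]}.
\]
Then $g_{\lambda'/\mu'}(\vx;\va,\vb)=\det(PQR)$.

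Next, I would apply Lemma~\ref{lem:C-B} twice (first to $PQ$ and $R$, then to $P$ and $Q$, or equivalently in one shot to $PQR$) to obtain
\[
  \det(PQR)=\sum_{\rho,\nu\in\Par_n}(\det P^\nu)(\det Q_\nu^\rho)(\det R_\rho).
\]
A direct check using the definitions gives $\det P^\nu=d_{\lambda,\nu}(\va,\vb)$, $\det R_\rho=d'_{\rho,\mu}(\va,\vb)$, and $\det Q_\nu^\rho=\det(e_{\nu_i-\rho_j-i+j}[X])_{i,j=1}^n=s_{\nu'/\rho'}(\vx)$ by the dual Jacobi--Trudi formula for Schur functions.

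Finally, I would invoke the $e$-analogue of Lemma~\ref{lem:det(h)=0} (whose proof is identical, relying only on $e_m(\vx)=0$ for $m<0$) to conclude that $\det P^\nu=0$ unless $\nu\subseteq\lambda$, $\det Q^\rho_\nu=0$ unless $\rho\subseteq\nu$, and $\det R_\rho=0$ unless $\mu\subseteq\rho$. This restricts the sum to the range $\mu\subseteq\rho\subseteq\nu\subseteq\lambda$ stated in the theorem. The only nontrivial point is the bookkeeping in the index shifts $a,b,c\mapsto k,l$ and the sign-free verification that $e_m[X]=0$ for $m<0$ yields the vanishing conditions; both are routine, so there is no serious obstacle and the argument parallels that of Theorem~\ref{thm:g=csc} step by step.
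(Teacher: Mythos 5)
Your proposal is correct and is exactly the argument the paper intends: its proof of this theorem is given only as ``similar to the proof of Theorem~\ref{thm:g=csc}'', and your $e$-analogue of that proof—decomposing the plethystic argument of \eqref{eq:dualJT skew g} into three summands, factoring the matrix as $PQR$, applying Lemma~\ref{lem:C-B}, identifying $\det P^\nu=d_{\lambda,\nu}(\va,\vb)$, $\det Q_\nu^\rho=s_{\nu'/\rho'}(\vx)$, $\det R_\rho=d'_{\rho,\mu}(\va,\vb)$, and using the $e$-version of Lemma~\ref{lem:det(h)=0} to restrict the summation range—is precisely that. No gaps.
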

\begin{proof}
  This is similar to the proof of Theorem~\ref{thm:g=csc}.
\end{proof}

The determinants \(c_{\lambda,\nu}(\va,\vb),c'_{\rho,\mu}(\va,\vb),
  d_{\lambda,\nu}(\va,\vb)\), and \( d'_{\rho,\mu}(\va,\vb) \) in
  Theorems~\ref{thm:g=csc} and \ref{thm:g=dsd} also have the following combinatorial
  interpretations.

\begin{prop}\label{prop:cd}
  For \( \lambda,\mu,\nu,\rho\in \Par_n \), we have
\begin{align*}
  c_{\lambda,\nu}(\va,\vb)
  &=\sum_{T\in\ET_{\ZZ}(\lambda/\nu)}\prod_{(i,j)\in \lambda/\nu}(-\alpha_{T(i,j)+c(i,j)}+\beta_{T(i,j)}), \\
  c'_{\rho,\mu}(\va,\vb)
  &=\sum_{T\in\IET_{\ZZ}(\rho/\mu)}\prod_{(i,j)\in \rho/\mu}(\alpha_{T(i,j)}-\beta_{T(i,j)-c(i,j)}),\\
  d_{\lambda,\nu}(\va,\vb)
  &=\sum_{T\in\ET_{\ZZ}(\lambda/\nu)}\prod_{(i,j)\in \lambda/\nu}(\beta_{T(i,j)+c(i,j)}-\alpha_{T(i,j)}), \\
  d'_{\rho,\mu}(\va,\vb)
  &=\sum_{T\in\IET_{\ZZ}(\rho/\mu)}\prod_{(i,j)\in \rho/\mu}(-\beta_{T(i,j)}+\alpha_{T(i,j)-c(i,j)}).
\end{align*}
\end{prop}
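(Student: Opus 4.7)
The plan is to deduce all four identities in Proposition~\ref{prop:cd} from the combinatorial interpretations of $c_{\lambda,\mu}(\va,\vb)$ and $C_{\lambda,\mu}(\va,\vb)$ already established in Theorems~\ref{thm:c_lm} and~\ref{thm:C_lm}; no new lattice-path or crystal argument is required.

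The first two identities are immediate once one matches determinants. The matrix defining $c_{\lambda,\nu}(\va,\vb)$ in Theorem~\ref{thm:g=csc} is literally the matrix appearing in~\eqref{eq:c_lm}, so the first identity is a direct transcription of Theorem~\ref{thm:c_lm}. For the second, a glance at the entries shows that $c'_{\rho,\mu}(\va,\vb)$ and $C_{\mu,\rho}(\va,\vb)$ (as defined in~\eqref{eq:C_lm}) are given by the same determinant, so the identity is Theorem~\ref{thm:C_lm} applied to the pair $(\mu,\rho)$ in place of $(\lambda,\mu)$.

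For the remaining two identities I would convert each $e$-determinant into an $h$-determinant using $e_m[Z]=(-1)^m h_m[-Z]$ from Proposition~\ref{prop:h[-Z]}, applied entry-wise. Extracting the factor $(-1)^{\lambda_i-i}$ from row~$i$ and $(-1)^{-\nu_j+j}$ from column~$j$ of the resulting matrix yields
\begin{equation*}
  d_{\lambda,\nu}(\va,\vb) = (-1)^{|\lambda/\nu|}\det\!\left(h_{\lambda_i-\nu_j-i+j}[A_{i-1}-B_{\lambda_i-1}]\right)_{i,j=1}^n,
\end{equation*}
and inspection of the definition of $c_{\lambda,\nu}$ in Theorem~\ref{thm:g=csc} identifies the right-hand determinant as $c_{\lambda,\nu}(\vb,\va)$, that is, the polynomial $c_{\lambda,\nu}(\va,\vb)$ with the two parameter sequences $\va$ and $\vb$ swapped. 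Applying Theorem~\ref{thm:c_lm} with $\va$ and $\vb$ interchanged writes $c_{\lambda,\nu}(\vb,\va)$ as a sum over $\ET_\ZZ(\lambda/\nu)$ whose general summand is a product of $|\lambda/\nu|$ factors of the form $(-\beta_{T(i,j)+c(i,j)}+\alpha_{T(i,j)})$; distributing the global sign $(-1)^{|\lambda/\nu|}$ across these $|\lambda/\nu|$ factors flips each into $(\beta_{T(i,j)+c(i,j)}-\alpha_{T(i,j)})$, giving the third identity. The fourth identity is obtained in exactly the same way from the analogous relation
\begin{equation*}
  d'_{\rho,\mu}(\va,\vb) = (-1)^{|\rho/\mu|} C_{\mu,\rho}(\vb,\va)
\end{equation*}
combined with Theorem~\ref{thm:C_lm}.

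There is no substantive obstacle: the argument is a matter of matching determinants and tracking signs. The only point requiring care is verifying that the sign $(-1)^{|\lambda/\nu|}$ (resp.~$(-1)^{|\rho/\mu|}$) extracted from the determinant agrees with the number of factors in the product over the cells of the generalized skew shape, so that distributing it cleanly negates each factor. Since the combinatorial formulas of Theorems~\ref{thm:c_lm} and~\ref{thm:C_lm} are homogeneous of exactly that degree, this bookkeeping is automatic.
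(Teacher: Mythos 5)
Your proof is correct and follows essentially the same route as the paper: both arguments reduce all four identities to Theorems~\ref{thm:C_lm} and~\ref{thm:c_lm} by recognizing \( c_{\lambda,\nu} \) and \( c'_{\rho,\mu} \) as the determinants in \eqref{eq:c_lm} and \eqref{eq:C_lm} (the latter with \( (\lambda,\mu) \) replaced by \( (\mu,\rho) \)) and converting the \( e \)-determinants for \( d_{\lambda,\nu} \) and \( d'_{\rho,\mu} \) into \( h \)-determinants via \( e_m[Z]=(-1)^m h_m[-Z] \). The only cosmetic difference is bookkeeping: the paper absorbs your explicit sign \( (-1)^{|\lambda/\nu|} \) into the substitution \( (\va,\vb)\mapsto(-\vb,-\va) \), writing \( d_{\lambda,\nu}(\va,\vb)=c_{\lambda,\nu}(-\vb,-\va) \) rather than \( (-1)^{|\lambda/\nu|}c_{\lambda,\nu}(\vb,\va) \), which amounts to the same thing since each factor in the combinatorial formulas is linear in the parameters.
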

\begin{proof}
  By a similar argument in the proof of Proposition~\ref{prop:CD}, we have
  \begin{equation}
    \label{eq:cdcd}
    c_{\lambda,\nu}(\va,\vb) = d_{\lambda,\nu}(-\vb,-\va), \qquad
    c'_{\rho,\mu}(\va,\vb) = d'_{\rho,\mu}(-\vb,-\va).
  \end{equation}
  Since 
  \begin{align*}
    c'_{\rho,\mu}(\va,\vb)=\det\left( h_{\rho_i-\mu_j-i+j}[A_{\mu_j}-B_{j-1}]  \right)_{i,j=1}^n = C_{\mu,\rho}(\va,\vb),
  \end{align*}
  the proof follows from \eqref{eq:cdcd} and Theorems~\ref{thm:C_lm} and \ref{thm:c_lm}.
\end{proof}

\begin{thm}\label{thm:omega_g}
  For \(\lambda,\mu\in \Par\) with \( \mu\subseteq\lambda \), we have
  \begin{align*}
    \omega(g_\lm(\vx;\va,\vb)) &= g_{\lambda'/\mu'}(\vx;-\vb,-\va).
  \end{align*}
\end{thm}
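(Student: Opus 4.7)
The proof should closely parallel that of Theorem~\ref{thm:omega_G}, but it will in fact be simpler because no infinite product factor $C$ or $D$ appears in the skew Schur expansion of $g_\lm(\vx;\va,\vb)$. The plan is to combine the skew Schur expansion of $g_\lm(\vx;\va,\vb)$ in Theorem~\ref{thm:g=csc}, the skew Schur expansion of $g_{\lambda'/\mu'}(\vx;\va,\vb)$ in Theorem~\ref{thm:g=dsd}, and the duality \eqref{eq:cdcd} between the coefficient determinants.

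First I would fix an integer $n$ with $\lambda,\mu\in\Par_n$ and apply Theorem~\ref{thm:g=csc} to write
\[
  g_\lm(\vx;\va,\vb) = \sum_{\substack{\rho,\nu\in \Par_n \\ \mu\subseteq\rho\subseteq\nu\subseteq\lambda}} c_{\lambda,\nu}(\va,\vb)\, s_{\nu/\rho}(\vx)\, c'_{\rho,\mu}(\va,\vb).
\]
Since $\omega$ is a $\QQ[\va,\vb]$-algebra endomorphism acting only on the $\vx$-variables, it leaves the scalar coefficients $c_{\lambda,\nu}(\va,\vb)$ and $c'_{\rho,\mu}(\va,\vb)$ untouched and sends $s_{\nu/\rho}(\vx)$ to $s_{\nu'/\rho'}(\vx)$. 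Thus
\[
  \omega(g_\lm(\vx;\va,\vb)) = \sum_{\substack{\rho,\nu\in \Par_n \\ \mu\subseteq\rho\subseteq\nu\subseteq\lambda}} c_{\lambda,\nu}(\va,\vb)\, s_{\nu'/\rho'}(\vx)\, c'_{\rho,\mu}(\va,\vb).
\]

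Next I would invoke \eqref{eq:cdcd}, which gives $c_{\lambda,\nu}(\va,\vb) = d_{\lambda,\nu}(-\vb,-\va)$ and $c'_{\rho,\mu}(\va,\vb) = d'_{\rho,\mu}(-\vb,-\va)$. Substituting these identities into the sum yields
\[
  \omega(g_\lm(\vx;\va,\vb)) = \sum_{\substack{\rho,\nu\in \Par_n \\ \mu\subseteq\rho\subseteq\nu\subseteq\lambda}} d_{\lambda,\nu}(-\vb,-\va)\, s_{\nu'/\rho'}(\vx)\, d'_{\rho,\mu}(-\vb,-\va),
\]
which is precisely the expansion of $g_{\lambda'/\mu'}(\vx;-\vb,-\va)$ produced by Theorem~\ref{thm:g=dsd} with $(\va,\vb)$ replaced by $(-\vb,-\va)$. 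This completes the argument.

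There is essentially no hard step: everything rests on machinery already established earlier in the section. The only point requiring a moment's care is verifying that $\omega$ commutes with the formal sum, i.e., that the (possibly infinite) sum over $\rho,\nu$ converges in the appropriate completion of $\QQ[\va,\vb]\otimes\Lambda$ so that $\omega$ may be applied term by term; this follows from the same bounded-degree considerations in $\vx$ that justified Theorems~\ref{thm:g=csc} and \ref{thm:g=dsd}.
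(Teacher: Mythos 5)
Your proposal is correct and is precisely the argument the paper intends: the paper's proof of this theorem simply says it follows as in Theorem~\ref{thm:omega_G} using \eqref{eq:cdcd} together with Theorems~\ref{thm:g=csc} and \ref{thm:g=dsd}, which is exactly what you have written out. (Your closing worry about applying $\omega$ term by term is moot here, since the sum over $\mu\subseteq\rho\subseteq\nu\subseteq\lambda$ is finite.)
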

\begin{proof}
  This can be proved similarly to the proof of Theorem~\ref{thm:omega_G} using
  \eqref{eq:cdcd} and Theorems~\ref{thm:g=csc} and \ref{thm:g=dsd}.
\end{proof}

\begin{remark}
  The involution \( \omega \) applied to \( G_{\lm}(\vx;\va,\vb) \) and \(
    g_\lm(\vx;\va,\vb) \) can also be computed directly using the Jacobi--Trudi-like formulas \eqref{eq:JT skew G}, \eqref{eq:dualJT skew G}, \eqref{eq:JT skew g}, and \eqref{eq:dualJT skew g},
    which gives simple proofs of Theorems~\ref{thm:omega_G} and \ref{thm:omega_g}.
  First we observe that for any \( n,a,b,c,d\in\ZZ \),
  \begin{align*}
    h_n[A_a-A_b-B_c+B_d] = (-1)^n e_n[-A_a+A_b+B_c-B_d] = e_n[-A_a+A_b+B_c-B_d]\Big\rvert_{\substack{\va = -\va \\ \vb=-\vb}}.
  \end{align*}
  Using this, for \( i,j\in \{1,\dots,n\} \) and \( m\in\ZZ \), we have
  \begin{align*}
     \omega( h_m[X\ominus(A_{\lambda_i}-A_{\mu_j}-B_{i-1} + B_{j})]) 
    &= \sum_{k\ge 0}\omega \left( h_{m+k}[X]\right) h_k[A_{\lambda_i}-A_{\mu_j}-B_{i-1} + B_{j}] \\
    &= \sum_{k\ge 0}e_{m+k}[X]e_k[-A_{\lambda_i}+A_{\mu_j}+B_{i-1} - B_{j}]\Big\rvert_{\substack{\va = -\va \\ \vb=-\vb}}\\
    &= e_m[X\ominus(B_{i-1} - B_{j}-A_{\lambda_i}+A_{\mu_j})]\Big\rvert_{\substack{\va = -\va \\ \vb=-\vb}}.
  \end{align*}
  By \eqref{eq:JT skew G}, \eqref{eq:dualJT skew G}, and \eqref{eq:omega_C=D}, we have 
  \begin{align*}
    \omega(G_{\lm}(\vx;\va,\vb)) = G_{\lambda'/\mu'}(\vx;\vb,\va)\Big\rvert_{\substack{\va = -\va \\ \vb=-\vb}} = G_{\lambda'/\mu'}(\vx;-\vb,-\va).
  \end{align*}
  Similarly, we obtain \( \omega(g_\lm(\vx;\va,\vb)) = g_{\lambda'/\mu'}(\vx;-\vb,-\va) \) using \eqref{eq:JT skew g}, \eqref{eq:dualJT skew g}, and
  \begin{align*}
    \omega(h_m[X-A_{\lambda_i-1}+A_{\mu_j} +B_{i-1} -B_{j-1}])= e_m[X-B_{i-1} +B_{j-1}+A_{\lambda_i-1}-A_{\mu_j}]\Big\rvert_{\substack{\va = -\va \\ \vb=-\vb}}.
  \end{align*}
\end{remark}

\section{Further study}
\label{sec:further_study}

In this paper we introduced refined canonical stable Grothendieck polynomials \(
G_{\lambda}(\vx;\va,\vb) \) and their duals \( g_{\lambda}(\vx;\va,\vb)\) with
two infinite sequences  \( \va \) and \( \vb \) of parameters. Since these generalize
Grothendieck polynomials \( G_\lambda(\vx) \) and their duals \( g_\lambda(\vx)
\), a natural question is to extend known properties of \( G_\lambda(\vx) \) and
\( g_\lambda(\vx) \) to \( G_{\lambda}(\vx;\va,\vb) \) and \(
g_{\lambda}(\vx;\va,\vb) \), respectively. In this section we propose some open
problems in this regard.

Recall that we have two combinatorial models for \( G_{\lambda}(\vx;\va,\vb) \)
in Theorem~\ref{thm:comb_intro} and Corollary~\ref{cor:G_comb2}, and for \(
g_{\lambda}(\vx;\va,\vb) \) in Theorem~\ref{thm:comb_intro} and
Corollary~\ref{cor:g_comb2}. If \( \va=(0,0,\dots) \) and \( \vb=(1,1,\dots) \),
then there is an algorithm, called an \emph{uncrowding algorithm}, which proves
the equivalence of these two combinatorial models for \( G_\lambda(\vx) \). The uncrowding algorithm was
first developed by Buch \cite[Theorem~6.11]{Buch2002} and 
generalized by Chan--Pflueger \cite{CP21:grothendieck}, Reiner--Tenner--Yong
\cite{RTY2018}, and Pan--Pappe--Poh--Schilling \cite{Pan2022}.
The equivalence of the two combinatorial models for \( g_\lambda(\vx) \)
was also shown bijectively by Lam and Pylyavskyy \cite{LP2007}.

\begin{problem}
  Find a combinatorial proof (desirably by using a modified uncrowding
  algorithm) of the equivalence of the two combinatorial models for \(
  G_{\lambda}(\vx;\va,\vb) \) in Theorem~\ref{thm:comb_intro} and
  Corollary~\ref{cor:G_comb2}. Similarly, for \( g_{\lambda}(\vx;\va,\vb) \),
  find a combinatorial proof of the equivalence of Theorem~\ref{thm:comb_intro} and
  Corollary~\ref{cor:g_comb2}.
\end{problem}

Buch \cite[Theorem~5.4]{Buch2002} found a combinatorial interpretation for the
Littlewood--Richardson-like coefficients \( c^\nu_{\lambda,\mu} \) for \( G_\lambda(\vx) \) defined by
\begin{equation}\label{eq:LR_G}
  G_\lambda(\vx)G_\mu(\vx)  = \sum_{\nu}c^\nu_{\lambda,\mu}G_\nu(\vx).
\end{equation}
Buch \cite[Corollary~6.7 and Theorem~6.9]{Buch2002} also found combinatorial interpretations for
different types of Littlewood--Richardson-like coefficients \( {d}^\nu_{\lambda,\mu}
\) and \( e^\nu_{\lambda,\mu} \) defined by
\begin{align}
  \label{eq:LR_Gd}
    \Delta G_{\nu}(\vx)  &= \sum_{\lambda,\mu}{d}^\nu_{\lambda,\mu}G_\lambda(\vx)\otimes G_\mu(\vx),\\
    \label{eq:LR_G'}
    G_{\nu/\lambda}(\vx)  &= \sum_{\mu}e^\nu_{\lambda,\mu}G_\mu(\vx),
\end{align}
where \( \Delta \) is the coproduct in the Hopf algebra of symmetric functions,
see \cite{grinberg14:hopf_algeb_combin}.
Yeliussizov \cite[Theorem~3.3 and (37)]{Yeliussizov2017} generalized
\eqref{eq:LR_G} and \eqref{eq:LR_Gd} to \( G_\lambda^{(\alpha,\beta)}(\vx) \).
As observed by Yeliussizov \cite[p.297]{Yeliussizov2017},
by the duality \( \langle G_\lambda(\vx), g_\mu(\vx) \rangle = \delta_{\lambda,\mu} \),
\eqref{eq:LR_Gd} is equivalent to
\[
  g_\lambda(\vx)g_\mu(\vx)  =
  \sum_{\nu}d^\nu_{\lambda,\mu} g_\nu(\vx) .
\]

Similarly, we define the Littlewood--Richardson-like coefficients \(
c^\nu_{\lambda,\mu} (\va,\vb)\), \( d^\nu_{\lambda,\mu} (\va,\vb)\) and \(
e^\nu_{\lambda,\mu} (\va,\vb) \) by
\begin{align*}
  G_\lambda(\vx;\va,\vb)G_\mu(\vx;\va,\vb)  &=
                                              \sum_{\nu}c^\nu_{\lambda,\mu}(\va,\vb) G_\nu(\vx;\va,\vb),\\
  \Delta G_{\nu}(\vx;\va,\vb)  &= \sum_{\lambda,\mu}{d}^\nu_{\lambda,\mu}(\va,\vb)G_\lambda(\vx;\va,\vb)\otimes G_\mu(\vx;\va,\vb),\\
  G_{\nu/\lambda}(\vx;\va,\vb)  &= \sum_{\mu}e^\nu_{\lambda,\mu}(\va,\vb) G_\mu(\vx;\va,\vb).
\end{align*}
Using the duality in Theorem~\ref{thm:vec dual}, the skew operator \( f^\perp
\), and the fact
\[
  \Delta g_\lambda(\vx;\va,\vb)=\sum_{\mu\subseteq\lambda}
  g_{\mu}(\vx;\va,\vb)\otimes g_{\lm}(\vx;\va,\vb),
\]
one can show that
\begin{align*}
 g_\lambda(\vx;\va,\vb) g_\mu(\vx;\va,\vb)
   &= \sum_\nu d^\nu_{\lambda,\mu}(\va,\vb) g_\nu(\vx;\va,\vb), \\
 \Delta g_\nu(\vx;\va,\vb)
   &= \sum_{\lambda,\mu} c^\nu_{\lambda,\mu}(\va,\vb) g_\lambda(\vx;\va,\vb)
          \otimes g_\mu(\vx;\va,\vb),~\mbox{and} \\
 g_{\nu/\lambda}(\vx;\va,\vb)
   &= \sum_\mu c^\nu_{\lambda,\mu}(\va,\vb) g_\mu(\vx;\va,\vb).
\end{align*}

\begin{problem}
  Find the Littlewood--Richardson-like coefficients \( c^\nu_{\lambda,\mu}
  (\va,\vb)\), \( d^\nu_{\lambda,\mu} (\va,\vb)\) and
  \( e^\nu_{\lambda,\mu} (\va,\vb) \).
\end{problem}

The theory of noncommutative Schur functions was developed by Fomin and Greene
\cite{FG1998} and further studied by Blasiak and Fomin \cite{Blasiak2017}. Buch
\cite[Proof of Theorem~3.2]{Buch2002} expressed the stable Grothendieck
polynomial \( G_\lm(\vx) \) using the Fomin--Greene operators. Similar
expressions were given by Lam and Pylyavskyy \cite[Proof of Theorem~9.1]{LP2007}
for \( g_\lm(\vx) \) and by Yeliussizov \cite[Theorem~5.3]{Yeliussizov2017}
for \( G_\lambda^{(\alpha,\beta)}(\vx) \). Yeliussizov
\cite[Section~5.4]{Yeliussizov2017} also found a similar expression for \(
G_{\lambda/\!\!/\mu}(\vx) \), which is another skew version of \(
G_{\lambda}(\vx) \) different from \( G_{\lm}(\vx) \).

\begin{problem}
  Find expressions for \( G_{\lambda}(\vx;\va,\vb) \) and \(
  g_{\lambda}(\vx;\va,\vb) \) (or their skew versions) using Fomin--Greene-type
  operators.
\end{problem}

Finally, as mentioned in the introduction it is known that Grothendieck polynomials
\( G_{\lambda}(\vx) \) and \( g_{\lambda}(\vx) \) have integrable vertex
models~\cite{Brubaker2023, Buciumas2020, Gunna20,
MS13,Motegi2021,WZ19}, crystal structures~\cite{Galashin2017,HS20,Pan2022} and
probabilistic models~\cite{Motegi2021, yeliussizov20:_dual_groth}. It would
be very interesting to extend these results to \( G_{\lambda}(\vx;\va,\vb) \) and
\( g_{\lambda}(\vx;\va,\vb) \).

\section*{Acknowledgments}

The authors would like to thank the anonymous referee for extremely
careful reading of the manuscript and for providing plenty of useful
comments and suggestions. They are also grateful to Travis Scrimshaw
and Darij Grinberg for many helpful comments and fruitful discussions.

\bibliographystyle{abbrv}

\end{document}